\newtheorem{theorem}{Theorem}
\newtheorem{proposition}[theorem]{Proposition}
\newtheorem{lemma}[theorem]{Lemma}
\newtheorem{corollary}[theorem]{Corollary}
\theoremstyle{remark}
\newtheorem*{example}{Example}
\theoremstyle{definition}
  \def\e{\epsilon}
   \def\vol{{\rm vol}}
\def\v{{\rm v}}
\def\w{{\rm w}} 
 \def\b{\mathcal{B}(C_n)}
  \def\bd{\mathcal{B}(D_n)}
 \def\c{\mathcal{B}^c(C_n)}
  \def\g{\mathcal{B}^{\beta}(C_n)}
  \def\gd{\mathcal{B}^{\beta}(D_n)}
  \def\r{\mathcal{B}^{\beta}}
\def\t{\mathcal{S}(C_n)}
\def\q{\mathbb{Q}}
\def\gr{\mathcal{G}}
\def\base{\mathcal{B}}
\begin{document}
\title[Root polytopes, triangulations, and the subdivision algebra, II]{Root polytopes, triangulations, and the subdivision algebra, II}
\author{Karola M\'esz\'aros }
\address{
Department of Mathematics, Massachusetts Institute of Technology, Cambridge, MA 02139
}
\date{June 30, 2009}
\keywords{root polytope, type $C_n$, type $D_n$,  triangulation, volume, Ehrhart polynomial, noncrossing alternating graph, subdivision algebra,  bracket algebra, reduced form, noncommutative Gr\"obner basis}
\subjclass[2000]{05E15, 
16S99, 
51M25, 
52B11. 
}

\begin{abstract}
 
  The  type $C_{n}$ root polytope $\mathcal{P}(C_{n}^+)$ is the convex hull in $\mathbb{R}^{n}$ of the origin and the points $e_i-e_j, e_i+e_j, 2e_k$ for $1\leq i<j \leq n, k \in [n]$.  Given a graph $G$, with edges labeled positive or negative, associate to each edge $e$ of $G$ a vector $\v(e)$ which is $e_i-e_j$ if $e=(i, j)$, $i<j$, is labeled negative and $e_i+e_j$ if it is labeled positive. For such a signed graph $G$,  the associated   root polytope $\mathcal{P}(G)$    is the intersection of  $\mathcal{P}(C_{n}^+)$ with the cone generated by the  vectors  $\v(e)$, for edges $e$ in $G$.   The reduced forms of a certain monomial $m[G]$ in commuting variables $x_{ij}, y_{ij}, z_k$   under   reductions derived from the relations of a bracket algebra of type $C_n$,  can be interpreted as triangulations of $\mathcal{P}(G)$.  Using these triangulations, the volume of $\mathcal{P}(G)$ can be calculated.  If we allow variables  to commute  only  when    all  their indices are distinct, then we prove that the reduced form of $m[G]$, for ``good" graphs $G$, is unique and yields  a canonical triangulation of  $\mathcal{P}(G)$  in which each simplex corresponds to a noncrossing alternating graph in a type $C$ sense.  A special case of our results proves a conjecture of  A. N. Kirillov about the uniqueness of the reduced form of a Coxeter type element in the bracket algebra of type $C_n$. We also study the bracket algebra of type $D_n$ and show that a family of monomials has unique reduced forms in it. A special case of our results proves a conjecture of  A. N. Kirillov about the uniqueness of the reduced form of a Coxeter type element in the bracket algebra of type $D_n$.
  \end{abstract}

\maketitle
  
  \section{Introduction}
\label{sec:in}

In this paper we develop the connection between triangulations of type $C_n$ root polytopes and a commutative algebra $\t$, the subdivision algebra of  type $C_n$ root polytopes. A type $C_n$ root polytope   is a convex hull  of the origin and  some of  the points $e_i-e_j, e_i+e_j, 2e_k$ for $1\leq i<j \leq n, k \in [n]$, where $e_i$ denotes the $i^{th}$ standard basis vector in $\mathbb{R}^n$.  A polytope $\mathcal{P}(m)$  corresponds to each monomial $m\in \t$, and each relation of the algebra  equating a monomial with three others, $m_0=m_1+m_2+m_3$, can be interpreted as cutting the polytope $\mathcal{P}(m_0)$ into   two polytopes $\mathcal{P}(m_1)$ and $\mathcal{P}(m_2)$ with interiors disjoint such that  $\mathcal{P}(m_1)\cap \mathcal{P}(m_2)=\mathcal{P}(m_3)$; thus the name subdivision algebra for $\t$.

 A subdivision algebra $\mathcal{S}(A_{n})$ for type $A_{n}$ root polytopes was studied in \cite{kar} yielding an exciting interplay between polytopes and algebras. The algebra $\mathcal{S}(A_{n})$ is related to the algebras studied by Fomin and Kirillov in \cite{fk} and by Kirillov in \cite{k1}, which have tight connections to Schubert calculus.  Using techniques for polytopes,  the algebra $\mathcal{S}(A_{n})$ can be understood better, and using the properties of $\mathcal{S}(A_{n})$ results for root polytopes can be deduced.  The subdivision algebra $\t$ is a type $C_n$ generalization of $\mathcal{S}(A_{n})$ and its intimate connection to type $C_n$ root polytopes is displayed by a variety of results obtained by using this connection.

  Root polytopes were first defined by Postnikov in \cite{p1}, although the full root polytope of type $A_{n}$ already appeared in the work of Gelfand, Graev and Postnikov \cite{GGP}, where they gave a canonical triangulation of it into simplices corresponding to noncrossing alternating trees. Properties of this triangulation are studied in \cite[Exercise 6.31]{ec2}. Canonical triangulations for a family of type $A_{n}$ root polytopes were constructured in \cite{kar} extending the result of \cite{GGP}. In this paper we define type $C_n$ analogs for noncrossing and alternating graphs, and show that a family of type $C_n$ root polytopes, containing the full root polytope, has canonical triangulations into simplices corresponding to noncrossing alternating graphs. Using the canonical  triangulations we compute the volumes for these root polytopes.
 
 The subdivision algebra $\t$ is closely related to the noncommutative bracket algebra $\b$ of type $C_n$ defined by A. N. Kirillov \cite{kir}.   Kirillov conjectured  the uniqueness of the reduced form of a Coxeter type element in $\b$.  As the algebras $\t$ and  $\b$ have over ten not-so-simple-looking relations, we postpone their definitions and the precise statement of Kirillov's conjecture till Section \ref{sec:alg}. While at the first sight the relations of $\b$ might appear rather mysterious,  we interpret them similarly to the relations of $\t$, as certain subdivisions of root polytopes. 
 This connection ultimately  yields a proof of Kirillov's  conjecture along with more general theorems on reduced forms, of which there are two types.  In the noncommutative algebra $\b$ we show that for  a family of monomials $\mathcal{M}$, including the Coxeter type element defined by Kirillov, the reduced form is unique. In the commutative algebra $\t$ and the commutative counterpart $\c= \b / [\b, \b] $ of $\b$, the reduced forms are not unique; however, we show that the number of monomials in a reduced form of $m \in \mathcal{M}$  is independent of the order of reductions  performed. 
 
 We also study  the noncommutative bracket algebra $\bd$ of type $D_n$ defined by A. N. Kirillov \cite{kir}. Using noncommutative Gr\"obner bases techniques we prove that a family of monomials has unique reduced forms in it. A special case of our results proves a conjecture of  A. N. Kirillov about the uniqueness of the reduced form of a Coxeter type element in the bracket algebra of type $D_n$. 
  
 This paper is organized as follows. In Section \ref{sec:alg} we give the definition of $\b$, as well as two related commutative algebras $\c$ and $\t$. We also state Kirillov's conjecture pertaining to $\b$ in Section \ref{sec:alg}.  In Section \ref{sec:red} we introduce signed graphs, define the type $C$ analogue of alternating graphs, and  show how to reformulate the relations of   the algebras $ \c, \t$ into reductions on graphs. In Section \ref{sec:root} we  introduce coned root polytopes of type $C_n$ and state  the Reduction Lemma which connects root polytopes and  the algebras $\b, \c, \t$. In Section \ref{sec:play} we prove a characterization of the vertices of coned type $C_n$ root polytopes, while in Section \ref{sec:reduction_lemma} we prove the Reduction Lemma. In Section \ref{sec:forest} we establish the relation between volumes of root polytopes and reduced forms of monomials in the algebras $ \c, \t$ using the Reduction Lemma. In Section \ref{reductionsB} we reformulate the noncommutative relations of $\b$ in terms of egde-labeled graphs and define well-structured and well-labeled graphs, key for our further considerations. In Section \ref{reductionsB1} we prove a simplified version of Kirillov's conjecture, construct a canonical triangulation for the full type $C_n$ root polytope $\mathcal{P}(C_n^+)$ and calculate its volume. In Section \ref{sec:gen} we generalize Kirillov's conjecture  to all monomials arising from well-structured and well-labeled graphs and give the triangulations and volumes of the corresponding root polytopes. In Section \ref{11}  we prove the general form of Kirillov's conjecture in a weighted bracket algebra $\g$,  and show a way to calculate Ehrhart polynomials of certain type $C_n$ root polytopes.   In Section \ref{sec:D} the definition of $\bd$ is given along with Kirillov's conjecture 
  pertaining to it. In Section \ref{sec:13}  combinatorial results regarding a family of monomials are proved. Finally, in Section \ref{sec:grobi} we prove a general result on the reduced forms of monomials implying Kirillov's type $D_n$ conjecture. 
 
 \section{The bracket and subdivision algebras of type $C_n$}
 \label{sec:alg}
 
 In this section the definition of the bracket algebra $\b$ is given, along with a conjecture of Kirillov pertaining to it. We introduce the subdivision algebra $\t$, which, as its name suggests, will be shown to govern subdivisions of type $C_n$ root polytopes.  
 
 Kirillov \cite{kir} defined the algebra we are denoting $\b$ as a type $B_n$ bracket algebra $\mathcal{B}(B_n)$, but since we can interpret its generating  variables as corresponding to either the type $B_n$ and type $C_n$ roots, we refer to  it as a type $C_n$ bracket algebra $\b$. The reason for our desire to designate $\b$ as a type $C_n$ algebra is its essential  link  to type $C_n$ root polytopes, which we develop in this paper. Here we define   a simplified form of the bracket algebra $\b$; for a more general definition, see Section \ref{11}.
 
 Let the {\bf bracket algebra} $\mathcal{B}(C_n)$ {\bf  of type $C_n$} be an associative algebra  over $\mathbb{Q}$  with a set of generators  $\{x_{ij}, y_{ij}, z_i \mid 1 \leq i\neq j\leq n\}$ subject to the following relations:
 
 (1) $x_{ij}+x_{ji}=0,$ $y_{ij}=y_{ji}$, for $i \neq j$,

(2) $z_i z_j=z_j z_i$

($3$) $x_{ij}x_{kl}= x_{kl}x_{ij}$, $y_{ij}x_{kl}= x_{kl}y_{ij}$, $y_{ij}y_{kl}= y_{kl}y_{ij}$, for  $i <j, k<l$ distinct.

($4$) $z_i x_{kl}=x_{kl} z_i$, $z_i y_{kl}=y_{kl} z_i$, for all $i\neq k, l$

(5) $x_{ij}x_{jk}=x_{ik}x_{ij}+x_{jk}x_{ik}$,  for  $1\leq i<j<k\leq n$,

($5'$) $x_{jk}x_{ij}=x_{ij}x_{ik}+x_{ik}x_{jk}$, for  $1\leq i<j<k\leq n$,

(6) $x_{ij}y_{jk}=y_{ik}x_{ij}+y_{jk}y_{ik}$,  for  $1\leq i<j<k\leq n$,

($6'$) $y_{jk}x_{ij}=x_{ij}y_{ik}+y_{ik}y_{jk}$, for  $1\leq i<j<k\leq n$,

(7) $x_{ik}y_{jk}=y_{jk}y_{ij}+y_{ij}x_{ik}$, for  $1\leq i<j<k\leq n$,

($7'$) $y_{jk}x_{ik}=y_{ij}y_{jk}+x_{ik}y_{ij}$, for  $1\leq i<j<k\leq n$,

(8) $y_{ik}x_{jk}=x_{jk}y_{ij}+y_{ij}y_{ik}$, for  $1\leq i<j<k\leq n$,

($8'$) $x_{jk}y_{ik}=y_{ij}x_{jk}+y_{ik}y_{ij}$, for  $1\leq i<j<k\leq n$,

(9) $x_{ij}z_j=z_i x_{ij}+ y_{ij} z_i + z_j y_{ij}$, for $i<j$

($9'$) $z_jx_{ij}= x_{ij}z_i+  z_i y_{ij}+  y_{ij}z_j$, for $i<j$



 \medskip
 
Let $w_{C_n}=\prod_{i=1}^{n-1} x_{i, i+1}z_n$ be a Coxeter type element in $\mathcal{B}(C_n)$ and let $P^\mathcal{B}_n$ be the polynomial in variables $x_{ij}, y_{ij}, z_i,  1 \leq i\neq j\leq n$  obtained from $w_{C_n}$ by successively applying the defining relations $(1)-(9')$ in any order until unable to do so. 
We call $P^\mathcal{B}_n$  a {\bf reduced form} of $w_{C_n}$ and consider the process of  successively applying the defining relations $(5)-(9')$ as a reduction process, with possible commutations (2)-(4) between reductions, as we show in the following example.

\begin{eqnarray}  \label{ex2}
  \mbox {\boldmath$  x_{12}x_{23}$}z_3 & \rightarrow&  x_{13}\underline{x_{12} z_3}+x_{23}\mbox{\boldmath $x_{13} z_3$}  \nonumber \\
&\rightarrow& \mbox{\boldmath $x_{13} z_3$} x_{12} +x_{23}z_1 x_{13}  +\mbox{\boldmath $x_{23}y_{13}$} z_1+  \mbox{\boldmath $x_{23}z_3$} y_{13}    \nonumber \\
&\rightarrow &  z_1 x_{13} x_{12} +  y_{13} z_1 x_{12} +   z_3 y_{13}  x_{12}+x_{23}z_1 x_{13}   +y_{12}x_{23}  z_1  +y_{13}y_{12} z_1     \nonumber \\ & &  + z_2 \mbox{\boldmath $ x_{23} y_{13} $}+ y_{23} z_2y_{13} +z_3 y_{23}  y_{13}      \nonumber \\ 
&\rightarrow &  z_1 x_{13} x_{12} +  y_{13} z_1 x_{12} +   z_3 y_{13}  x_{12}+x_{23}z_1 x_{13}  +y_{12}x_{23}  z_1 \nonumber \\ & & +y_{13}y_{12} z_1      + z_2 y_{12}x_{23}+z_2 y_{13}y_{12}  + y_{23} z_2y_{13} +z_3 y_{23}  y_{13}    \nonumber
\end{eqnarray}

In the example above the pair of variables on which one of reductions $(5)-(9')$ is performed is in boldface, and the variables which we commute according to one of   (2)-(4)  are underlined.

 \medskip
\noindent {\bf Conjecture 1. (Kirillov \cite{kir})}\textit{ Apart from applying the  relations (1)-(4), the reduced form  $P^\mathcal{B}_n$ of $w_{C_n}$ does not depend on the order in which the reductions are performed.}
 \medskip
 
 Note that the above statement does not hold true for any monomial. We show one simple example of how it fails.

\begin{eqnarray}  \label{ex1}
  \mbox {\boldmath$  x_{12}x_{23}$}y_{13} & \rightarrow&  x_{13} x_{12} y_{13}+x_{23} x_{13} y_{13}   
  \end{eqnarray}

\begin{eqnarray}  \label{ex2}
  x_{12}  \mbox {\boldmath$x_{23}y_{13}$} & \rightarrow&  x_{12}  y_{12} x_{23}+x_{12}  y_{13} y_{12}   
  \end{eqnarray}
  
 Note that we reduced the monomial $x_{12}x_{23}y_{23}$ in two different ways yielding two different polynomials. The reader can also check another example of this phenomenon by reducing the monomial $y_{14}x_{24}y_{34}$ in two different ways to obtain two different reduced forms.

We prove Conjecture 1 in Section \ref{reductionsB1}, as well as its generalizations in Sections \ref{sec:gen} and \ref{11}. We first define and study a commutative algebra $\t$ closely related to $\b$, though more complicated than its commutative counterpart, $\c=\b/ [\b, \b]$, which is simply  the commutative associative algebra over $\mathbb{Q}$  with a set of generators  $\{x_{ij}, y_{ij}, z_i \mid 1 \leq i\neq j\leq n\}$ subject to  relations  (1) and $(5)-(9')$ from above. Our motivation for defining $\t$ is a natural correspondence between the relations of $\t$ and ways to subdivide type $C_n$ root polytopes, which   correspondence is made precise in the Reduction Lemma (Lemma \ref{reduction_lemma}). In order to emphasize this  connection, we call $\t$ the  subdivision algebra of type $C_n$. The subalgebra $\mathcal{S}(A_{n-1})$ of $\t$ generated by  $\{x_{ij} \mid 1 \leq i\neq j\leq n\}$ has been studied in \cite{kar}, and an analogous correspondence between the relations of  $\mathcal{S}(A_{n-1})$ and ways to subdivide type $A_{n-1}$ root polytopes has been established.  Moreover,  results in the spirit of Conjecture 1 for type $A_{n-1}$ can also be found in \cite{kar}.

 Let the {\bf subdivision algebra}  $\t$ be the commutative  algebra  over $\mathbb{Q}[\beta]$, where $\beta$ is a variable (and a central element),    with a set of generators  $\{x_{ij}, y_{ij}, z_i \mid 1 \leq i\neq j\leq n\}$ subject to the following relations:
 
 (1) $x_{ij}+x_{ji}=0,$ $y_{ij}=y_{ji}$, for $i \neq j$,

(2) $x_{ij}x_{jk}=x_{ik}x_{ij}+x_{jk}x_{ik}+\beta x_{ik}$,  for  $1\leq i<j<k\leq n$,

(3) $x_{ij}y_{jk}=y_{ik}x_{ij}+y_{jk}y_{ik}+\beta y_{ik}$,  for  $1\leq i<j<k\leq n$,

(4) $x_{ik}y_{jk}=y_{jk}y_{ij}+y_{ij}x_{ik}+\beta y_{ij}$, for  $1\leq i<j<k\leq n$,

(5) $y_{ik}x_{jk}=x_{jk}y_{ij}+y_{ij}y_{ik}+\beta y_{ij}$, for  $1\leq i<j<k\leq n$,

(6) $y_{ij}x_{ij}=z_i x_{ij}+ y_{ij} z_i +\beta z_i$, for $i<j$

(7) $x_{ij}z_j=y_{ij}x_{ij} + z_j y_{ij} + \beta y_{ij} $, for $i<j.$

 Notice that when we set $\beta=0$ relations (2)-(5) of $\t$ become relations (5)-(8) of $\b$, and if we combine relations (6) and (7) of $\t$ we obtain relation $(9)$ of $\b$.  In some cases we will in fact simply work with the  commutative counterpart of $\b$, $\c$.

We treat  relations (2)-(7) of $\t$ as {\bf reduction rules}:  
\begin{equation} \label{red1}
x_{ij}x_{jk}\rightarrow x_{ik}x_{ij}+x_{jk}x_{ik}+\beta x_{ik},
 \end{equation}
\begin{equation} \label{red2}
 x_{ij}y_{jk}\rightarrow y_{ik}x_{ij}+y_{jk}y_{ik}+\beta y_{ik},
  \end{equation}
\begin{equation} \label{red3}
x_{ik}y_{jk}\rightarrow y_{jk}y_{ij}+y_{ij}x_{ik}+\beta y_{ij},
 \end{equation}
\begin{equation} \label{red4}
y_{ik}x_{jk}\rightarrow x_{jk}y_{ij}+y_{ij}y_{ik}+\beta y_{ij}. 
\end{equation}
\begin{equation} \label{red5}
y_{ij}x_{ij}\rightarrow  z_i x_{ij}+ y_{ij} z_i +\beta z_i 
\end{equation}
\begin{equation} \label{red6}
x_{ij}z_j\rightarrow y_{ij}x_{ij} + z_j y_{ij} + \beta y_{ij}  
\end{equation}
 \vskip0.1in
 
 A  \textbf{reduced form} of the monomial $m$ in variables  $x_{ij}, y_{ij}, z_k,  1 \leq i< j\leq n, k \in [n],$ in the algebra $\t$  is a polynomial $P_n^\mathcal{S}$  obtained by successive applications of reductions (\ref{red1})-(\ref{red6}) until no further reduction is possible, where we allow commuting any two variables. Requiring that   $m$ is in variables  $x_{ij}, y_{ij}, z_k,  1 \leq i< j\leq n, k \in [n],$ is without loss of generality, since otherwise we can simply  replace $x_{ij}$ with $-x_{ji}$ and $y_{ij}$ with $y_{ji}$. Note that the reduced forms are not necessarily unique. However  we show in Section \ref{sec:forest} that the number of monomials in a reduced form of a suitable monomial $m$ is independent of the order of the reductions performed.

   \section{Commutative reductions in terms of graphs}
\label{sec:red}


 
In this section we rephrase the reduction process described in Section \ref{sec:alg}  in terms of graphs. This view will be useful throughout the paper.  We use the language of signed graphs. Signed graphs have appeared in the literature before, for example in Zaslavsky's and Reiner's work \cite{z1, z2, R1, R2}. Their notation is not the same, and we use a notation closer to Reiner's. In particular, positive and negative edges in our notation  mean something different than in Zaslavsky's language. We request the reader to read the definitions with full attention for this reason.
 
A \textbf{signed graph} $G$ on the vertex set $[n]$ is a multigraph with each edge labeled by $+$ or $-$.   All graphs in this paper are signed and in each of them the loops are labeled positive. We denote an edge with endpoints $i, j$ and sign $\e \in \{+, -\}$ by $(i, j, \e)$. Note that  $(i, j, \e)=(j, i, \e)$. As a result, we drop the signs from the loops in figures.  
A positive edge, that is an edge labeled by $+$, is said to be {\bf positively incident}, or, {\bf incident with a positive sign}, to both of its endpoints.  A negative  edge is positively incident to its smaller vertex and  {\bf negatively  incident} to  its greater endpoint.
We say that a graph is {\bf alternating} if for any vertex $v\in V(G)$ the edges of $G$ incident to $v$ are incident to $v$ with the same sign.

Think of a monomial $m \in \t$  in variables $x_{ij}, y_{ij}, z_k, 1\leq i<j \leq n, k \in [n],$ as a signed graph $G$ on the vertex set $[n]$ with a negative edge $(i, j, -)$ for each appearance of $x_{ij}$ in $m$ and with a positive edge $(i, j, +)$ for each appearance of $y_{ij}$ in $m$ and with a loop $(i, i, +)$ for each appearance of $z_{i}$ in $m$. Let $G^\mathcal{S}[m]$ denote this graph. 
   It is straighforward to reformulate the  reduction rules (\ref{red1})-(\ref{red6}) in terms of reductions on graphs. If $m \in \t$, then we replace each monomial $m$ in the reductions by corresponding graphs $G^\mathcal{S}[m]$. 
   
   {\bf Reduction rules for graphs:} 
   
   Given   a graph $G_0$ on the vertex set $[n]$ and   $(i, j, -), (j, k, -) \in E(G_0)$ for some $i<j<k$, let   $G_1, G_2, G_3$ be graphs on the vertex set $[n]$ with edge sets
  \begin{eqnarray} \label{graphs1}
E(G_1)&=&E(G_0)\backslash \{(j, k,-)\} \cup \{(i, k, -)\}, \nonumber \\
E(G_2)&=&E(G_0)\backslash \{(i, j,-)\} \cup \{(i, k,-)\},\nonumber \\ 
E(G_3)&=&E(G_0)\backslash \{(i, j,-)\} \backslash \{(j, k,-)\} \cup \{(i, k,-)\}. 
\end{eqnarray}
    Given   a graph $G_0$ on the vertex set $[n]$ and   $(i, j, -), (j, k, +) \in E(G_0)$ for some $i<j<k$, let   $G_1, G_2, G_3$ be graphs on the vertex set $[n]$ with edge sets
  \begin{eqnarray} \label{graphs2}
E(G_1)&=&E(G_0)\backslash \{(j, k,+)\} \cup \{(i, k, +)\}, \nonumber \\
E(G_2)&=&E(G_0)\backslash \{(i, j,-)\} \cup \{(i, k, +)\},\nonumber \\ 
E(G_3)&=&E(G_0)\backslash \{(i, j,-)\} \backslash \{(j, k, +)\} \cup \{(i, k, +)\}. 
\end{eqnarray}
   Given   a graph $G_0$ on the vertex set $[n]$ and   $(i, k, -), (j, k, +) \in E(G_0)$ for some $i<j<k$, let   $G_1, G_2, G_3$ be graphs on the vertex set $[n]$ with edge sets
  \begin{eqnarray} \label{graphs3}
E(G_1)&=&E(G_0)\backslash \{(j, k, +)\} \cup \{(i, j, +)\}, \nonumber \\
E(G_2)&=&E(G_0)\backslash \{(i, k, -)\} \cup \{(i, j, +)\},\nonumber \\ 
E(G_3)&=&E(G_0)\backslash \{(i, k, -)\} \backslash \{(j, k, +)\} \cup \{(i, j, +)\}. 
\end{eqnarray}
   Given   a graph $G_0$ on the vertex set $[n]$ and   $(i, k, +), (j, k, -) \in E(G_0)$ for some $i<j<k$, let   $G_1, G_2, G_3$ be graphs on the vertex set $[n]$ with edge sets
  \begin{eqnarray} \label{graphs4}
E(G_1)&=&E(G_0)\backslash \{(j, k, -)\} \cup \{(i, j, +)\}, \nonumber \\
E(G_2)&=&E(G_0)\backslash \{(i, k, +)\} \cup \{(i, j, +)\},\nonumber \\ 
E(G_3)&=&E(G_0)\backslash \{(i, k, +)\} \backslash \{(j, k, -)\} \cup \{(i, j, +)\}. 
\end{eqnarray}

 Given   a graph $G_0$ on the vertex set $[n]$ and   $(i, j, -), (i, j, +) \in E(G_0)$ for some $i<j $, let   $G_1, G_2, G_3$ be graphs on the vertex set $[n]$ with edge sets
  \begin{eqnarray} \label{graphs5}
E(G_1)&=&E(G_0)\backslash \{(i, j, +)\} \cup \{(i, i, +)\}, \nonumber \\
E(G_2)&=&E(G_0)\backslash \{(i, j, -)\} \cup \{(i, i, +)\},\nonumber \\ 
E(G_3)&=&E(G_0)\backslash \{(i, j, +)\} \backslash \{(i, j, +)\}   \cup \{(i, i, +)\}. 
\end{eqnarray}

Given   a graph $G_0$ on the vertex set $[n]$ and   $(i, j, -), (j, j, +) \in E(G_0)$ for some $i<j $, let   $G_1, G_2, G_3$ be graphs on the vertex set $[n+1]$ with edge sets
  \begin{eqnarray} \label{graphs6}
E(G_1)&=&E(G_0)\backslash \{(j, j, +)\} \cup \{(i, j, +)\}, \nonumber \\
E(G_2)&=&E(G_0)\backslash \{(i, j, -)\} \cup \{(i, j, +)\},\nonumber \\ 
E(G_3)&=&E(G_0)\backslash \{(j, j, +)\} \backslash \{(i, j, -)\} \cup \{(i, j, +)\}. 
\end{eqnarray}

    We say that $G_0$ \textbf{reduces} to $G_1, G_2, G_3$ under the reduction rules defined by equations (\ref{graphs1})-(\ref{graphs6}).

An \textbf{$\mathcal{S}$-reduction tree} $\mathcal{T}^\mathcal{S}$  for a monomial $m_0$, or equivalently, the graph $G^\mathcal{S}[m_0]$,  is constructured as follows. 
  The root of $\mathcal{T}^\mathcal{S}$ is labeled by $G^\mathcal{S}[ m_0]$. Each node $G^\mathcal{S}[m]$ in $\mathcal{T}^\mathcal{S}$  has three children, which depend on the choice of the edges  of  $G^\mathcal{S}[m]$ on which we perform the reduction. E.g., if the reduction  is performed on edges $(i, j, -), (j, k, -) \in E(G^\mathcal{S}[m])$,  $i<j<k$, then the three children of  the node $G_0=G^\mathcal{S}[m]$   are labeled by the graphs   $G_1, G_2, G_3$   as described by equation (\ref{graphs1}). For an example of an $\mathcal{S}$-reduction tree, see Figure \ref{fig:tau}.

    \begin{figure}[htbp] 
\begin{center} 
\includegraphics[width=1.1\textwidth]{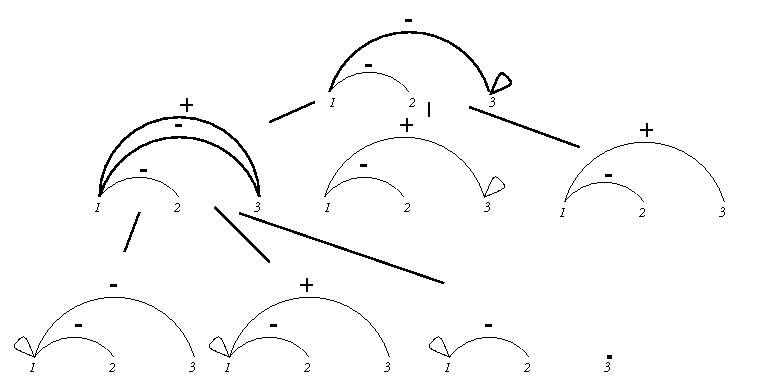} 
\caption{An $\mathcal{S}$-reduction tree with root corresponding to the monomial $x_{12}x_{13}z_3$. Summing the monomials corresponding to  the graphs labeling the  leaves  of the reduction tree   multiplied by suitable powers of $\beta$, we obtain a reduced form $P^\mathcal{S}_n$ of  $x_{12}x_{13}z_3$,  $P^\mathcal{S}_n=z_1x_{12}x_{13}+z_1x_{12}y_{13}+\beta z_1 x_{12}+x_{12}y_{13}z_3+\beta x_{12}y_{13}$.  } 
\label{fig:tau}
\end{center} 
\end{figure}

  Of course, given a graph we can also easily recover the corresponding monomial. Namely, given a graph $G$ on the vertex set $[n]$ we associate to it the monomial $m^\mathcal{S}[G]=m^{\mathcal{B}^c}[G]=\prod_{(i, j, \e) \in E(G)}\w(i, j,\e)$, where $\w(i, j, -)=x_{ij}$ for $i<j$, $\w(i, j, -)=x_{ji}$ for $i>j$, $\w(i, j,+)=y_{ij}$ and $\w(i, i,+)=z_{i}$. 
Summing the monomials corresponding to  the graphs labeling the  leaves  of the reduction tree $\mathcal{T}^\mathcal{S}$ multiplied by suitable powers of $\beta$, we obtain a reduced form of  $m_0$.

\section{Coned type $C$ root polytopes}
 \label{sec:root}
 
Generalizing the terminology of \cite[Definition 12.1]{p1}, a root polytope of   type $C_{n}$ is  the convex hull of the origin  and some of the points $e_i-e_j$,  $e_i+e_j$ and $2e_k$ for $1\leq i<j \leq n$, $k \in [n]$,  where $e_i$ denotes the $i^{th}$ coordinate vector in $\mathbb{R}^{n}$. A very special root polytope is the full type $C_n$ root polytope  \begin{align*}\mathcal{P}(C_{n}^+)&=\textrm{ConvHull}(0,  e_{ij}^-,   e_{ij}^+ , 2e_k \mid  1\leq i<j \leq n, k \in [n]) \\ &=\textrm{ConvHull}(0,  e_{ij}^-,  2e_k \mid  1\leq i<j \leq n, k \in [n]),\end{align*}  where  $e_{ij}^-=e_i-e_j$ and $e_{ij}^+=e_i+e_j$.  We study a  class of root polytopes including $\mathcal{P}(C_{n}^+)$, which we now discuss. 

Let $G$ be a   graph on the vertex set $[n]$.  Let  \[\v(i, j, \e) = \left\{ 
\begin{array}{l l}
  e_{ij}^\e & \quad \mbox{if $i\leq j$}\\
  e_{ji}^\e& \quad \mbox{if $i>j,$}\\
\end{array} \right. \]
  Define $$\mathcal{V}_G=\{\v(i, j, \e)  \mid  (i, j, \e) \in E(G)\}, \mbox{ a set of vectors associated to $G$;}$$

 $$\mathcal{C}(G)=\langle \mathcal{V}_G \rangle :=\{\sum_{\v(i, j, \e) \in \mathcal{V}_G}c_{ij} \v(i, j, \e) \mid  c_{ij}\geq 0\}, \mbox{ the {\bf cone} associated to $G$; and } $$  
  $$\overline{\mathcal{V}}_G=\Phi^+ \cap \mathcal{C}(G), \mbox{ all the positive roots of type $C_n$ contained in $\mathcal{C}(G)$}, $$
   where $\Phi^+=\{e_{ij}^- , e_{ij}^+, 2e_k \mid1\leq i<j \leq n, k \in [n]\}$ is the set of         positive roots of type $C_n$. The idea to consider the positive roots of a root system inside a cone appeared earlier in Reiner's work \cite{R1}, \cite{R2} on signed posets. Coned type $A_n$ root polytopes were studied in \cite{kar}.
   
   Define the {\bf transitive closure} of a graph $G$ as $$\overline{G}=\{(i, j, \e) \mid \v(i, j, \e) \in \overline{\mathcal{V}}_G\}$$
 
The {\bf root polytope} $\mathcal{P}(G)$ associated to graph $G$ is  
 
 \begin{equation} \label{eq1} \mathcal{P}(G)=\textrm{ConvHull}(0,  \v(i, j, \e)  \mid  (i, j, \e)  \in \overline{G})\end{equation} The root polytope $\mathcal{P}(G)$ associated to graph $G$ can also be defined as  \begin{equation} \label{eq2} \mathcal{P}(G)=\mathcal{P}(C_n^+) \cap \mathcal{C}(G).\end{equation} The equivalence of these two definition is proved in Lemma \ref{equivalent} in Section \ref{sec:reduction_lemma}. 
  
 Note that $\mathcal{P}(C_{n}^+)=\mathcal{P}(P^l)$ for the graph  $P^l=([n], \{(n,n, +), (i, i+1, -) \mid i \in [n-1]\}).$  While the choice of $G$ such that  $\mathcal{P}(C_{n}^+)=\mathcal{P}(G)$ is not unique, it becomes unique if we require that $G$ is {\bf minimal}, that is for no edge $(i, j, \e) \in E(G)$ can the corresponding vector $\v(i, j, \e)$ be written as a nonnegative linear combination of the vectors corresponding to the   edges $E(G) \backslash \{(i, j, \e)\}$.  Graph $P^l$ is minimal. 
 
 We can describe the vertices in $ \overline{\mathcal{V}}_G$ in terms of paths in $G$. 
     A \textbf{playable route} $P$ of a graph $G$ is an ordered sequence of edges $(i_1, j_1, \e_1),  \ldots,$ $(i_l, j_l, \e_l) \in E(G)$, $j_k=i_{k+1}$ for $k \in [l-1]$,  such that  $(i_k, j_k, \e_k)$ and $(i_{k+1}, j_{k+1}, \e_{k+1})$, $k \in [l-1]$, are incident to $j_k=i_{k+1}$ with opposite signs. For a playable route of $G$,   $\v(i_1, j_1,{\e_1})+ \cdots+\v(i_l, j_l,{\e_l}) \in \Phi^+$.

A \textbf{playable pair} $(P_1, P_2)$ in a graph $G$ is a pair of  playable routes $(i_1, j_1, \e_1), $ $\ldots,$ $(i_l, j_l, \e_l)$ and  $(i'_1, j'_1, \e'_1), \ldots,$ $(i'_{l'}, j'_{l'}, \e'_{l'})$ such that $i_1=j_l$ and $i'_1=j'_{l'}$.  It follows that $\frac{1}{2}(\v(i_1, j_1,{\e_1})+ \cdots+\v(i_l, j_l,{\e_l}))+\frac{1}{2}(\v(i'_1, j'_1,{\e'_1})+ \cdots+\v(i'_{l'}, j'_{l'},{\e_{l'}}))\in \Phi^+$.

Define a map $\phi$ from the playable routes and playable pairs to $ \Phi^+$ as follows.

\begin{align} \label{phieq}\nonumber
 \phi(P)&=  \v(i_1, j_1,{\e_1})+ \cdots+\v(i_l, j_l,{\e_l}),   \mbox{ where $P$ is the playable route} \\ & \nonumber\mbox{above,} \\    
 \nonumber \phi(P_1, P_2) &=\frac{1}{2}(\v(i_1, j_1,{\e_1})+ \cdots+\v(i_l, j_l,{\e_l}))+\frac{1}{2}(\v(i'_1, j'_1,{\e'_1})+ \cdots+\\&  +\v(i'_{l'}, j'_{l'},{\e_{l'}})), \mbox{ where $(P_1, P_2)$ is the playable pair above.}  
  \end{align}

 \begin{proposition} \label{playable_route}
 Let $G$ be a graph on the vertex set $[n]$. Any $v \in  \overline{\mathcal{V}}_G$ is $v=\phi(P)$ or $v=\phi(P_1, P_2)$  for some playable route  $P$ or playable pair $(P_1, P_2)$ of $G$. If  the set of vectors $\mathcal{V}_G$ is  linearly independent, then the correspondence between playable routes and pairs of $G$ and vertices in $\overline{\mathcal{V}}_G$ is a bijection.
 \end{proposition}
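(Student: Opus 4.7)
The plan is to show the containment $\phi(\cdot)\subseteq\overline{\mathcal V}_G$ first, then its reverse, and finally to argue bijectivity under the linear-independence hypothesis. The forward containment is essentially a calculation: each image $\phi(P)$ or $\phi(P_1,P_2)$ is a nonnegative combination of the $\v(e)$'s and so lies in $\mathcal C(G)$, and the opposite-sign condition at each shared vertex $j_k=i_{k+1}$ of a playable route $P=(i_1,j_1,\e_1),\ldots,(i_l,j_l,\e_l)$ makes the contributions of the two adjacent edges to $e_{j_k}$ cancel. Thus only the coordinates at $i_1$ and $j_l$ survive in $\phi(P)$, and a short case analysis on whether the route is closed and on the incidence types of the first and last edges at their outer endpoints (including the possibility of loops) pins the result to one of $e_i-e_j$, $e_i+e_j$, $2e_k$. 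The playable-pair version reduces to this, since each closed route contributes $\pm 2e_v$ to the sum and halving lands in $\Phi^+$.

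For the reverse direction, fix $v\in\overline{\mathcal V}_G$ and a nonnegative decomposition $v=\sum_{e\in E(G)} c_e\v(e)$. The key is to recast the resulting coordinate equations as a flow-conservation problem on $G$. Since negative edges have coordinate-sum $0$ whereas positive edges and loops contribute $+2$, the type of $v$ restricts the generators used: for $v=e_i-e_j$ only negative edges appear, and orienting each $(a,b,-)$ as $b\to a$ turns $v$ into a unit flow from $j$ to $i$; standard flow decomposition discards extraneous cycles and produces a simple directed path, whose reverse is a playable route (flow-conservation at an internal vertex $w$ amounts exactly to one incident edge having $w$ as its smaller endpoint and the other having $w$ as its larger endpoint, which is the playability condition there). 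For $v=e_i+e_j$ or $v=2e_k$, the coordinate-sum $2$ forces at least one positive edge or loop into the decomposition, acting as a local source of mass $+2$; the remaining balance equations are solved either by a single playable route (when one source can be threaded via negative-edge transport to deposit the prescribed $+1$'s at $i$ and $j$) or by a playable pair of two closed routes, each based at an independent source.

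For bijectivity under linear independence of $\mathcal V_G$: uniqueness of the nonnegative decomposition fixes the multiset of participating edges, the vertex-wise coordinate equations then force exactly one positive and one negative incidence at each internal vertex, and linear independence rules out nontrivial cycles, so the participating edges assemble into either a single walk or a disjoint union of two closed walks sharing a root. Playability then forces the edge ordering along each walk, giving injectivity. The main obstacle is the $v=e_i+e_j$ and $v=2e_k$ cases: positive edges and loops create mass rather than transport it, so the dichotomy between a single playable route and a playable pair is not a pure flow question. My tentative approach is to use the standard unfolding $C_n\hookrightarrow A_{2n-1}$ on the doubled vertex set $\{1,\ldots,n,\bar 1,\ldots,\bar n\}$ (sending $(i,j,+)\mapsto(i,\bar j)$ and the loop $(i,i,+)\mapsto(i,\bar i)$) to reduce to the type $A$ path-decomposition argument of \cite{kar}, then fold back, with invariance or non-invariance of the type $A$ path under the $i\leftrightarrow\bar i$ involution distinguishing whether the preimage is a route or a pair.
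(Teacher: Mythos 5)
Your forward containment and your flow argument for $v=e_i-e_j$ are fine (the latter is even a bit slicker than the paper's treatment of that subcase, since it needs no linear independence). But the genuinely type-$C$ content of the proposition — the cases $v=e_i+e_j$ and $v=2e_k$ — is exactly what you leave unproven, and your "tentative" unfolding to $A_{2n-1}$ does not, as stated, give a correct reduction. Under the folding $e_{\bar\imath}\mapsto -e_i$, a positive edge $(i,j,+)$ corresponds to the \emph{pair} of type-$A$ edges $(i,\bar\jmath)$ and $(j,\bar\imath)$, not to a single one; with your one-edge-per-edge lift, membership of $v$ in $\mathcal{C}(G)$ downstairs neither implies nor is implied by membership of a chosen preimage in the lifted cone, so the type-$A$ path decomposition of \cite{kar} cannot simply be invoked. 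You also do not show that a type-$A$ path in the doubled graph folds back to a playable route or pair (folded paths can revisit vertices), nor do you address how surjectivity is obtained for arbitrary $G$ in these cases, which is part of the statement. The paper instead argues directly on the support graph $H=([n],\{e:c_e\neq 0\})$ of the decomposition: $H$ has at most two leaves, and a case analysis on $0$, $1$, or $2$ leaves (a cycle; a cycle with a pendant path; a path or a cycle with two pendant paths; two components each a cycle plus path, giving a playable pair) exhibits the route or pair by inspection; the general-$G$ statement is then deduced in Proposition \ref{vertices} by covering $\mathcal{C}(G)$ with the simplicial cones of a central triangulation, reducing to subgraphs in $\mathcal{L}_n$.

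Your bijectivity paragraph also contains concrete errors. "Linear independence rules out nontrivial cycles" is false: by Lemma \ref{fong}, $G\in\mathcal{L}_n$ may contain, in each component, a unique cycle with an odd number of positive edges, and such cycles (and loops) are precisely what produce the $e_i+e_j$ and $2e_k$ vertices — the paper's Cases 1–3 revolve around $H$ containing such a cycle. Likewise, a playable pair is a pair of \emph{closed} routes based at $i_1$ and $i'_1$, which need not share a vertex (for $v=e_i+e_j$ they are based at $i$ and at $j$, possibly in different components), so "two closed walks sharing a root" misdescribes the structure you must recover. The injectivity itself is the easy part (uniqueness of the coefficients in a linearly independent set fixes the edge multiset, and the alternating-incidence condition then determines the route or pair), but as written your structural argument would not correctly assemble the support into a route or pair in the presence of the cycles that linear independence actually permits.
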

 
 The proof of Proposition  \ref{playable_route} appears in Section \ref{sec:play}.

 Define 
    $$\mathcal{L}_n=\{ G=([n], E(G))    \mid  \mbox{ $\mathcal{V}_G$ is  a linearly independent set}\},$$ and 
    $$\mathcal{L}(C_{n}^+)=\{ \mathcal{P}(G) \mid  G \in \mathcal{L}_n  \}, \mbox{the set of type $C_n$ \textbf{coned  root polytopes}}$$  with linearly independent generators. Since all polytopes in this paper are         coned  root polytopes   with linearly independent generators, we simply refer to them as coned root polytopes.

The next lemma characterizes graphs $G$ which belong to $ \mathcal{L}_n$; a version of it appears   in  \cite[p. 42]{fong}.

\begin{lemma}\label{fong} ( \cite[p. 42]{fong})
A   graph  $G$ on the vertex set $[n]$ belongs to $ \mathcal{L}_n$ if and only if each connected component of $G$ is a tree or a graph whose  unique simple cycle  has an odd number of positively labeled edges. 
\end{lemma}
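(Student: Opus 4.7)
The plan is to reduce to one connected component, then analyze by the number of edges, with the crux being a parity computation around the unique cycle in the unicyclic case.

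Since $\v(e)$ for edges $e$ in distinct components of $G$ have pairwise disjoint coordinate supports, $\mathcal{V}_G$ is linearly independent if and only if this holds for each component separately, so I may assume $G$ is connected on $k$ vertices with $m$ edges. Introduce the signed incidence $\iota(v,e)\in\{+1,-1\}$, defined to be $+1$ when $e$ is positively incident to $v$ and $-1$ otherwise, with both incidences of a loop $(v,v,+)$ being $+1$. Then $\v(e)=\sum_{v}\iota(v,e)\,e_v$, a loop contributing $2 e_v$, and a linear dependence $\sum_{e} c_e \v(e)=0$ is equivalent to the system of vertex equations $\sum_{e\ni v}\iota(v,e)\,c_e=0$ for every $v\in V(G)$.

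Now split by $m$. If $m\ge k+1$, all vectors lie in the $k$-dimensional subspace $\langle e_v : v\in V(G)\rangle$ and must be dependent, which corresponds to a component with at least two independent cycles. If $m=k-1$, so $G$ is a tree, I induct on $k$: a leaf $v$ has a unique incident edge $e$, forcing $c_e=0$ from its vertex equation, and the induction applies to the smaller tree. The interesting case is $m=k$ (connected unicyclic), where I would first iteratively prune degree-one vertices not on the unique simple cycle $C$, each prune forcing the removed edge's coefficient to vanish and leaving only the cycle equations.

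If $C$ is a loop at $v$, its equation reads $2 c_{\text{loop}}=0$, yielding independence and matching the odd count $p=1$ of positive edges in a length-one cycle. If $C=v_1 e_1 v_2\cdots v_l e_l v_1$ with $l\ge 2$, the equations at $v_2,\ldots,v_l$ give the recurrence $c_{e_{i+1}}=-\iota(v_{i+1},e_i)\iota(v_{i+1},e_{i+1})\,c_{e_i}$, and the equation at $v_1$ becomes a closure condition which, after grouping terms by edges, reads
\[
1 \;=\; (-1)^l \prod_{e\in E(C)} \iota(u_e,e)\,\iota(w_e,e) \;=\; (-1)^l\cdot(-1)^{l-p} \;=\; (-1)^p,
\]
where $u_e,w_e$ are the two cycle-endpoints of $e$ and $p$ denotes the number of positive edges of $C$; the middle equality uses that $\iota(u_e,e)\iota(w_e,e)=+1$ for a positive edge and $-1$ for a negative one. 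Hence a nontrivial dependence on $C$ exists iff $p$ is even, proving that $\mathcal{V}_G$ is linearly independent precisely when $C$ has an odd number of positive edges. The main obstacle I anticipate is the bookkeeping: tracking signed incidences while walking around the cycle and handling loops uniformly (as a length-one cycle of positive parity) so that the pruning step and the closure computation can be stated in a single case.
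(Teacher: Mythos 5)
Your argument is correct. Note that the paper does not actually prove this lemma: it is stated with a citation to Fong's thesis, so there is no in-paper proof to compare against. What you have written is the standard signed-incidence-matrix argument for signed graphic independence: splitting into components via disjoint supports, converting a dependence $\sum_e c_e\v(e)=0$ into the vertex equations $\sum_{e\ni v}\iota(v,e)c_e=0$, killing tree and pendant edges by leaf-pruning, and then deciding solvability on the unique cycle by the transfer recurrence, whose closure condition collapses (since each positive edge contributes $+1$ and each negative edge $-1$ to the product of its two incidences) to $(-1)^p=1$, so a nontrivial dependence exists exactly when the number $p$ of positive cycle edges is even. Your bookkeeping checks out, including the edge cases: the loop is correctly treated as a length-one cycle with $p=1$ (its equation $2c=0$ forces $c=0$), a pair of parallel edges is the $l=2$ case of the same recurrence, and the overcounted case $m\ge k+1$ is correctly dismissed by dimension. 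This is presumably the same argument as in Fong's thesis (it is the characterization of independent sets in the signed graphic matroid), so I would regard your proposal as a correct, self-contained substitute for the cited result.
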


    The full root polytope  $ \mathcal{P}(C_{n}^+) \in \mathcal{L}(C_{n}^+)$, since   the graph $P^l\in \mathcal{L}_n$ by Lemma \ref{fong}. We show below  how to obtain central triangulations for all polytopes $\mathcal{P}   \in  \mathcal{L}(C_{n}^+)$. A \textbf{central triangulation} of a $d$-dimensional  root polytope $\mathcal{P}$ is a collection  of $d$-dimensional simplices with disjoint interiors whose union is  $\mathcal{P}$, the vertices of which are vertices of $\mathcal{P}$ and the origin is a vertex of all of them. Depending on the context we at times   take the intersections of these maximal simplices to be part of the triangulation. 
    
        We now  state the crucial lemma which relates root polytopes and the algebras $\b, \c$ and $\t$ defined in Section \ref{sec:alg}.

    \begin{lemma} \label{reduction_lemma} \textbf{(Reduction Lemma)} 
Given   a graph $G_0 \in \mathcal{L}_n$ with $d$ edges let   $G_1, G_2, G_3$ be as described by any one of the equations (\ref{graphs1})-(\ref{graphs6}).   Then  $G_1, G_2, G_3 \in \mathcal{L}_n$,
$$\mathcal{P}(G_0)=\mathcal{P}(G_1) \cup \mathcal{P}(G_2)$$   where all polytopes  $\mathcal{P}(G_0), \mathcal{P}(G_1), \mathcal{P}(G_2)$ are   $d$-dimensional and    
$$\mathcal{P}(G_3)=\mathcal{P}(G_1) \cap \mathcal{P}(G_2)  \mbox{   is $(d-1)$-dimensional. } $$
\end{lemma}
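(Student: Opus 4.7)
The plan is to notice that all six reduction rules (\ref{graphs1})--(\ref{graphs6}) share a common geometric structure: in each case $G_0$ contains two edges $e, e'$ whose associated vectors $u = \v(e)$ and $v = \v(e')$ satisfy $u + v = w$, where $w$ is the vector associated to the new edge introduced in $G_1, G_2, G_3$. I would first verify this uniformly by reading off each of the six cases: $(e_i-e_j)+(e_j-e_k) = e_i - e_k$ in (\ref{graphs1}); $(e_i-e_j)+(e_j+e_k) = e_i + e_k$ in (\ref{graphs2}); $(e_i-e_k)+(e_j+e_k) = e_i + e_j$ in (\ref{graphs3}); $(e_i+e_k)+(e_j-e_k) = e_i + e_j$ in (\ref{graphs4}); $(e_i - e_j) + (e_i + e_j) = 2e_i$ in (\ref{graphs5}); and $(e_i-e_j) + 2e_j = e_i + e_j$ in (\ref{graphs6}). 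In each case, $G_1$ is obtained from $G_0$ by replacing $e'$ with the new edge (so $\mathcal{V}_{G_1}$ arises from $\mathcal{V}_{G_0}$ by replacing $v$ with $w$), $G_2$ by replacing $e$ (so replacing $u$ with $w$), and $G_3$ by deleting both and inserting the new edge (replacing $u, v$ with the single vector $w$). Set $\vec r$ for the (unchanged) list of vectors corresponding to the remaining edges of $G_0$.

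Next, linear independence of $\mathcal{V}_{G_i}$ follows because $\{u, v\}$, $\{u, u+v\}$, $\{v, u+v\}$ all have the same linear span and are simultaneously independent, and $\vec r$ is fixed. Hence $G_1, G_2, G_3 \in \mathcal{L}_n$, and $\mathcal{V}_{G_1}, \mathcal{V}_{G_2}$ each contain $d$ vectors while $\mathcal{V}_{G_3}$ contains $d-1$. Then I would prove the cone identities
\[
\mathcal{C}(G_0) = \mathcal{C}(G_1) \cup \mathcal{C}(G_2), \qquad \mathcal{C}(G_1) \cap \mathcal{C}(G_2) = \mathcal{C}(G_3),
\]
by a two-line case analysis: any $p \in \mathcal{C}(G_0)$ writes uniquely as $p = au + bv + \vec c$ with $a, b \geq 0$ and $\vec c$ a nonnegative combination of $\vec r$ (uniqueness by linear independence); if $a \geq b$ rewrite $p = (a-b)u + bw + \vec c \in \mathcal{C}(G_1)$, otherwise $p = (b-a)v + aw + \vec c \in \mathcal{C}(G_2)$; and the intersection falls exactly on $a=b$, which is $\mathcal{C}(G_3)$.

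Finally, using the equivalent description $\mathcal{P}(G) = \mathcal{P}(C_n^+) \cap \mathcal{C}(G)$ from (\ref{eq2}) (justified by Lemma \ref{equivalent}), intersection with $\mathcal{P}(C_n^+)$ distributes over union and intersection, giving $\mathcal{P}(G_0) = \mathcal{P}(G_1) \cup \mathcal{P}(G_2)$ and $\mathcal{P}(G_1) \cap \mathcal{P}(G_2) = \mathcal{P}(G_3)$. The dimension assertions follow because $\mathcal{P}(G)$ is full-dimensional in $\mathrm{span}\,\mathcal{V}_G$, whose dimension equals the number of edges for graphs in $\mathcal{L}_n$. The main obstacle is really just the bookkeeping to check uniformly that the six seemingly different reduction rules all fit the template $u + v = w$; once that is verified, the geometry of subdividing a two-dimensional cone into two subcones glued along the diagonal ray does all the work, and the linear independence hypothesis is what guarantees the local subdivision extends to the full cone (and polytope) without interference from $\vec r$.
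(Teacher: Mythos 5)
Your proposal is correct and follows essentially the same route as the paper: you establish the cone-level statement (the paper's Cone Reduction Lemma, Lemma \ref{cone_reduction_lemma}, whose proof the paper defers to the type $A_n$ case in \cite[Lemma 7]{kar} and which you spell out directly via the uniform template $u+v=w$), and then pass to polytopes by intersecting with $\mathcal{P}(C_n^+)$ using Lemma \ref{equivalent}, exactly as the paper does. The only difference is that you make the case-by-case verification and the subdivision of the two-dimensional subcone explicit rather than citing the type $A_n$ argument.
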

      \medskip
      
       What the Reduction Lemma really says is that performing a reduction on graph $G_0 \in \mathcal{L}_n$  is the same as ``cutting" the $d$-dimensional  polytope  $\mathcal{P}(G_0)$ into two $d$-dimensional polytopes $\mathcal{P}(G_1)$ and $ \mathcal{P}(G_2)$, whose vertex  sets are  subsets of the vertex set of   $\mathcal{P}(G_0)$, whose interiors are disjoint, whose union is $\mathcal{P}(G_0)$, and whose intersection is a facet of both. We prove the Reduction Lemma in Section \ref{sec:reduction_lemma}.

 \section{Characterizing the vertices of coned root polytopes}
 \label{sec:play}
 
 In this section we prove Proposition \ref{playable_route}, which characterizes the vertices of any root polytope $\mathcal{P}(G)$. We start by proving the statement for connected $G \in \mathcal{L}_n$.

\medskip

\begin{proposition} \label{conn}
 Let $G\in \mathcal{L}_n$ be a connected graph. The correspondence between playable routes  of $G$ and vertices in $\overline{\mathcal{V}}_G$ given by $$\phi: P= \{(i_1, j_1, \e_1), (i_2, j_2, \e_2), \ldots, (i_l, j_l, \e_l)\}  \mapsto   \v(i_1, j_1,{\e_1})+ \cdots+\v(i_l, j_l,{\e_l}), $$ is a bijection.
 \end{proposition}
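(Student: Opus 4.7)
The plan is to exploit Lemma \ref{fong} (which says a connected $G\in\mathcal{L}_n$ is either a tree or has exactly one simple cycle carrying an odd number of positive edges) together with the linear independence of $\mathcal{V}_G$. First I would verify that $\phi$ is well-defined into $\overline{\mathcal{V}}_G$. For a playable route $P=(i_1,j_1,\e_1),\ldots,(i_l,j_l,\e_l)$, the opposite-sign hypothesis at each intermediate vertex $w=j_k=i_{k+1}$ forces the $e_w$-coordinates of $\v(i_k,j_k,\e_k)$ and $\v(i_{k+1},j_{k+1},\e_{k+1})$ to cancel. Hence only the coordinates at $i_1$ and $j_l$ can be nonzero in $\phi(P)$, and a short case check on the possible endpoint signs confirms $\phi(P)\in\Phi^+$ (in particular, in the closed case $i_1=j_l$ the surviving contributions are both $+e_{i_1}$, giving $2e_{i_1}$).

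For surjectivity, fix $v\in\overline{\mathcal{V}}_G$. Linear independence of $\mathcal{V}_G$ yields a unique expansion $v=\sum_{e\in E(G)}c_e\v(e)$, and membership in $\mathcal{C}(G)$ gives $c_e\ge 0$. The core structural claim is that $c_e\in\{0,1\}$ for every $e$, and that the support $H=\{e:c_e=1\}$ is either a simple path in $G$ or coincides with the unique cycle of $G$ (in the latter case $v=2e_w$ for a specific vertex $w$). I would prove this by leaf-stripping: at any leaf $\ell$ with unique incident edge $e_\ell$, the vertex equation $c_{e_\ell}\sigma_\ell(e_\ell)=v_\ell$ has $v_\ell\in\{-1,0,1\}$, forcing $c_{e_\ell}\in\{0,1\}$ and fixing the sign at $\ell$. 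Peel $\ell$ off and recurse. The induction terminates either with the empty graph (the tree case, where the accumulated $H$ forms a path) or with the unique cycle remaining, on which the odd-positive-edge parity, together with non-negativity and the very restricted coordinate pattern of a positive root of type $C_n$, forces the all-ones solution and yields $v=2e_w$. With the structural claim in hand, I read off the playable route: start at the endpoint of $H$ whose incidence sign matches the $+1$-coefficient of $v$ (or, in the cycle case, at the vertex $w$ with $v_w=2$) and walk along $H$; the balance equation $\sum_{e\in H,\,e\ni w}\sigma_w(e)=0$ at each interior vertex is exactly the opposite-signs playability condition.

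Injectivity is then almost automatic: $\phi(P)=\phi(P')$ together with linear independence of $\mathcal{V}_G$ forces $P$ and $P'$ to use the same multi-set of edges, and in a tree or unicyclic graph a path-or-cycle sub-multigraph admits a unique playable ordering (up to reversal). The step I expect to be the main obstacle is the coefficient bound $c_e\in\{0,1\}$ in the unicyclic case. Leaf-stripping reduces the problem cleanly to a single equation on the cycle, but there one must argue via the sign pattern of the cycle's unique linear relation, controlled by the parity of its positive edges, to exclude any non-integral or $\ge 2$ solution. Matching this relation against the very restricted $\{-1,0,1,2\}$-coordinate patterns of $\Phi^+$ is the delicate case analysis at the heart of the proof, and it is also what singles out the cycle case as producing precisely the $2e_w$-type positive roots.
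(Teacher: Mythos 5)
Your overall outline (use linear independence to get a unique nonnegative expansion $v=\sum_e c_e\,{\rm v}(e)$, analyze the support, then read off a playable route) is in the spirit of the paper's argument, but the core structural claim you lean on is false, and this is exactly the delicate point you flagged. It is not true that $c_e\in\{0,1\}$, nor that the support is a simple path or the cycle. Take the connected graph $G$ on $\{1,2,3,4\}$ with pendant edge $(1,2,-)$ attached to the cycle $(2,3,-),(3,4,-),(2,4,+)$ (one positive edge, so $G\in\mathcal{L}_4$). Then $2e_1=2(e_1-e_2)+(e_2-e_3)+(e_3-e_4)+(e_2+e_4)$ lies in $\overline{\mathcal{V}}_G$, and by linear independence this is the only expansion: the pendant edge carries coefficient $2$. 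The corresponding playable route is the walk $(1,2,-),(2,3,-),(3,4,-),(4,2,+),(2,1,-)$, which traverses the pendant edge twice -- nothing in the definition of a playable route forbids repeated edges, and such repetitions are genuinely needed for surjectivity. Likewise $e_1+e_2=(e_1-e_2)+(e_2-e_3)+(e_3-e_4)+(e_2+e_4)$ has support equal to the whole lollipop (pendant edge plus cycle), which is neither a simple path nor the cycle, and $v$ is not of the form $2e_w$ even though the full cycle lies in the support. Your leaf-stripping step also quietly assumes $v_\ell\in\{-1,0,1\}$ at a leaf, missing the case $v_\ell=2$, which is precisely where the coefficient $2$ arises.

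Because of this, the surjectivity recipe ``walk along $H$ from the endpoint with matching sign'' does not produce the route in these cases, so the proof has a genuine gap rather than a repairable slip in bookkeeping. The paper avoids the issue by never bounding the coefficients: it takes $H=\{e: c_e\neq 0\}$, observes (from the two-nonzero-coordinate constraint on $v$ and the unicyclic structure) that $H$ has at most two leaves, and then does a case analysis on $0$, $1$, or $2$ leaves in which the constructed playable routes are allowed to wrap around the cycle and retrace pendant-path edges (e.g.\ ``the edges of $Q$, the edges of $C$, and then the edges on the path from $l$ to $j$''). If you want to salvage your approach, you would need to replace the $\{0,1\}$ claim by the correct statement that the expansion corresponds to a walk which is a path, or a path into and around the cycle, or a path--cycle--path retracing, and prove that trichotomy; at that point you have essentially reconstructed the paper's case analysis. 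The well-definedness and injectivity (up to reversal) parts of your argument are fine.
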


Denote by   $[e_i]w$  the coefficient of $e_i$ when  $w \in \mathbb{R}^n$ is expressed in terms of the standard basis  $e_1, \ldots, e_n$ of $\mathbb{R}^n$. 
\vskip 10pt 

\noindent \textit{Proof of Proposition  \ref{conn}.}
Given a playable route  $P$ of $G$, $\phi(P)\in {\overline{\mathcal{V}}}_G$ by definition.  
It remains to show that 
 for each vertex $v \in {\overline{\mathcal{V}}}_G$ there exists a playable route $P$ in $G$ such that $v=\phi(P)$. The uniqueness of such a route follows from the linear independence of the  set of vectors  $\mathcal{V}_G$ for $G\in \mathcal{L}_n$.

Consider  $v \in {\overline{\mathcal{V}}}_G$. Then $v=e_i\pm e_j$, for some $1\leq i<j\leq n$, or $v=2e_k=e_k+e_k$, for $k \in [n]$, and \begin{equation} \label{H}  v=\sum_{e\in E(G)}c_e \v(e), \mbox{for some  real $c_e\geq 0$.} \end{equation} Let $H=([n], \{ e \in E(G) \mid c_e \neq 0\})$. Observe that $H$ has at most one  connected component containing edges. This follows since a connected $G\in \mathcal{L}_n$ contains at most one simple cycle, and if there were two connected components of $H$, one would  be a tree contributing at least two nonzero coordinates to  the right hand side of (\ref{H}) and each  connected component  containing edges contributes at least one nonzero coordinate to  the right hand side of (\ref{H}).  But,   the left hand side of (\ref{H}) has one or  two nonzero coordinates.

If $k$ is a leaf of $H$ then $[e_k]v\neq 0$. Therefore, $H$ can have at most two leaves. We consider three cases depending on the number of leaves $H$ has: $0, 1, 2$. In all cases we show that there exists a playable route $P$ of $G$ with all its edges among the edges of $H$, such that $\phi(P)=v$, yielding the desired conclusion. 

\textbf{Case 1.}  $H$ has $0$ leaves. Since $H \subset G \in \mathcal{L}_n$,  it follows that $H$ is a simple  cycle. Relabel the vertices of the cycle so that $H$ is now a graph on $[m]$. Then $i=1$ since $1$ only has edges positively incident to it. Regardless of which vertex of $H$ is $j>1$,   there is a playable route $P$ starting at vertex $i$ and ending at $j$ such that $\phi(P)=v$.

\textbf{Case 2.}  $H$ has $1$ leaf. Then $H$ is a union of a simple cycle $C$ and a simple path $Q$. Relabel the vertices of $H$ so that it is a graph on the vertex set $[m]$. Let $l$ be the leftmost vertex of the cycle $C$ of $H$ and let $p$ be the vertex in common to $C$ and $Q$. Let $k$ be the unique leaf. 

If $l \neq p$, then $\{i, j\}=\{l, k\}$.  Thus, at least one of the edges of $C$ incident to $p$ are incident with an opposite sign to $p$ than the edge of $Q$ incident to $p$. Therefore, the edges on the path from $l$ to $p$ through the edge that is incident to $p$ in $C$ with the opposite sign to that of the edge of $Q$, and then the edges of path $Q$ form  a playable route $P$ such that   $\phi(P)=v$. 

If $l=p$ then we consider two possibilities, depending on whether  $l \not \in \{i, j\}$ or  $l \in \{i, j\}$. If  $l \not \in \{i, j\}$ then $i=k=1$ and $l\neq j$.  If $j \in C$, then  the edges of $Q$ (from $1$ to $l$) and the edges on the path from $l$ to $j$ through the edge that is incident to $j$ in $C$ with the sign of $e_j$ in $v$ make up a playable route $P$ with   $\phi(P)=v$.  If $j \in Q$ however, then, either the edges on the path from $i$ to $j$ along $Q$ make up a playable route $P$ with   $\phi(P)=v$, or the  the edges of $Q$ (from $1$ to $l$) and the edges of $C$ and then the edges on the path from $l$ to $j$  make up a playable route $P$ with   $\phi(P)=v$.

 If  $l=p$ and $l \in \{i, j\}$ then either $i=l$ or $j=l$. If $i=l$ then the edges on the path $Q$ from $l=1$ to $j=k$ make up a playable route $P$ with   $\phi(P)=v$. On the other hand if $j=l$ then $i=1$ and if the edge of $Q$ is incident to $l$ with the same sign as  that of the sign of $e_j$ in $v$, than   the edges of $Q$ make up a playable route $P$ with   $\phi(P)=v$. If, however, that sign is different, then it must be that $[e_j]v=1$ in which case all edges of $H$ (suitably ordered) make up a playable route $P$ with   $\phi(P)=v$. 

\textbf{Case 3.}   $H$ has $2$ leaves. Then $H$ could be  a path, or a union of a simple  cycle $C$ and two disjoint  paths $Q_1, Q_2$ attached to $C$ at vertices $p_1\neq p_2$, or  a union of a cycle $C$ and a tree $T$ with two leaves attached to $C$ at $t$. As in cases $1$ and $2$, in each case we can identify a playable route by inspection. We omit the details here. 
\qed

Proposition \ref{conn} yields a characterization of the vertices of $\mathcal{P}(G)$ for a connected 
$G\in  \mathcal{L}_n$.

 \begin{proposition} \label{phi}
  Let $G\in \mathcal{L}_n$. The map  $\phi$ defined by (\ref{phieq})  is a one-to-one  correspondence between playable routes and playable pairs  of $G$ and the vertices in $\overline{\mathcal{V}}_G$. 
 \end{proposition}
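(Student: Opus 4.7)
The plan is to reduce Proposition \ref{phi} to the already-proved connected case, Proposition \ref{conn}, by decomposing the non-negative representation of $v$ along the connected components of $G$. Since $G\in\mathcal{L}_n$, Lemma \ref{fong} implies each connected component of $G$ also lies in $\mathcal{L}_n$, so $\mathcal{V}_G$ is linearly independent. Consequently, the representation
$v=\sum_{e\in E(G)}c_e \v(e)$ with $c_e\geq 0$ (which exists since $v\in\mathcal{C}(G)$) is unique. This uniqueness will both drive the construction and yield injectivity of $\phi$.

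Group the representation by component: $v=\sum_{G'}W(G')$, with $W(G')=\sum_{e\in E(G')}c_e\v(e)$ supported on $V(G')$. Since components have disjoint vertex sets and $v\in\Phi^+$ has support of size at most two, at most two components contribute a nonzero $W(G')$. A short coordinate-sum computation is the key obstruction: each negative non-loop edge vector has coordinate sum $0$, while each positive edge or loop vector has coordinate sum $2$, so
\begin{equation*}
\sum_{k=1}^n [e_k]\,W(G') \;=\; 2\!\!\sum_{\substack{e\in E(G') \\ e\text{ positive}}}\!\! c_e \;\geq\; 0.
\end{equation*}

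Now run a case analysis on $v\in\Phi^+$. If $v=2e_k$, only the component containing $k$ can contribute, and Proposition \ref{conn} applied to that component produces the required playable route. If $v=e_i-e_j$ with $i<j$, the component containing $j$ must also contain $i$: otherwise a second component $G'_2$ would have $W(G'_2)=-e_j$, violating non-negativity of the coordinate sum; thus again a single component carries $v$ and Proposition \ref{conn} applies. If $v=e_i+e_j$ with $i<j$ and $i,j$ lie in the same component $G'$, Proposition \ref{conn} again applies directly. The remaining case is $v=e_i+e_j$ with $i,j$ in distinct components $G'_1,G'_2$. Support considerations force $W(G'_1)=e_i$ and $W(G'_2)=e_j$, so $2W(G'_k)=2e_{i_k}\in\Phi^+\cap\mathcal{C}(G'_k)=\overline{\mathcal{V}}_{G'_k}$; Proposition \ref{conn} applied to each component yields playable routes $P_1,P_2$ with $\phi(P_k)=2e_{i_k}$. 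Since $\phi(P_k)$ is supported on the single vertex $i_k$, the endpoints of $P_k$ must coincide, so $P_k$ is closed, and $(P_1,P_2)$ is a playable pair with $\phi(P_1,P_2)=e_i+e_j=v$.

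Injectivity of $\phi$ then follows from uniqueness of the non-negative representation: distinct routes or pairs produce distinct edge multisets with distinct coefficient tuples $(c_e)$, and therefore distinct vectors. The main obstacle is the coordinate-sum obstruction that rules out a second component contributing a negative multiple of a standard basis vector; once this is in hand, the rest is a clean case split invoking Proposition \ref{conn} on each relevant component.
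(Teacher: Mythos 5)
Your proof is correct, and it follows the same overall strategy as the paper --- reduce to the connected case, Proposition \ref{conn} --- but it executes the key two-component case differently. The paper works with the support graph $H$ of the nonnegative representation of $v$: it observes that $H$ can have at most two edge-containing components, that if there are two then each contributes exactly one nonzero coordinate and hence (since $G\in\mathcal{L}_n$) is a simple cycle with a possibly empty path attached, and it then asserts that the edges of these two components, suitably ordered, form the playable pair. You instead decompose along the connected components of $G$ itself, use the coordinate-sum observation (negative edges contribute $0$, positive edges and loops contribute $2$) to rule out $e_i-e_j$ splitting across two components, and in the genuinely split case $v=e_i+e_j$ you re-invoke Proposition \ref{conn} inside each component with the doubled target $2e_{i_k}\in\Phi^+\cap\mathcal{C}(G'_k)$, deducing that the resulting routes are closed and so form a playable pair. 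This buys you two things: you avoid redoing the structural (cycle-plus-path) analysis of each component, since Proposition \ref{conn} already supplies the closed route, and your coordinate-sum obstruction makes explicit a point the paper leaves implicit, namely why a single component's contribution can never be $-e_j$. The only caveat, shared equally with the paper, is that injectivity via linear independence identifies a route with its edge multiset (so reversals of a route, or reorderings within a pair, are tacitly regarded as the same object); within that convention your injectivity argument matches the paper's.
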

 
 \proof The proof is almost identical to that of Proposition \ref{conn}. The only difference is that the graph $H$ defined in the proof of  Proposition \ref{conn} could have two connected  components containing edges. The case of $H$   with one  connected  component containing edges is the same as in  the proof of  Proposition \ref{conn}.  
 
Let the  two  connected components of $H$  containing edges be $H_1$ and $H_2$. Then, $H_1$ and $H_2$ each  contributes exactly one coordinate with a nonzero coefficient, and thus each of them is  a union of a simple cycle (since $G\in \mathcal{L}_n$) and a possibly empty simple  path. The edges of $H_1$ and $H_2$, in a suitable order,  constitute  playable pairs.
 \qed

\begin{proposition} \label{vertices}
For  any graph $G$  the set of vertices  ${\overline{\mathcal{V}}}_G$ is the image  of   playable routes and pairs of $G$ under the map $\phi$ defined by (\ref{phieq}).
\end{proposition}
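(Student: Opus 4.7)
The plan is to deduce Proposition \ref{vertices} from Proposition \ref{phi} by extracting a linearly independent subgraph. The one direction is immediate: if $P$ is a playable route of $G$, then by definition $\phi(P) \in \Phi^+$, and clearly $\phi(P) \in \mathcal{C}(G)$ since $\phi(P)$ is a nonnegative integer combination of vectors $\v(e)$ for $e \in E(G)$; similarly for a playable pair $(P_1,P_2)$, the vector $\phi(P_1,P_2) \in \Phi^+ \cap \mathcal{C}(G) = \overline{\mathcal{V}}_G$. So I only need the reverse inclusion.

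Take $v \in \overline{\mathcal{V}}_G$. Then $v$ lies in the cone $\mathcal{C}(G)$, so by the cone version of Carath\'eodory's theorem applied to the generating set $\mathcal{V}_G$, there exists a linearly independent subset $\mathcal{V}' \subseteq \mathcal{V}_G$ and nonnegative coefficients with $v = \sum_{\v(e) \in \mathcal{V}'} c_e \v(e)$. Let $G'$ be the subgraph of $G$ on vertex set $[n]$ whose edges correspond precisely to the vectors in $\mathcal{V}'$. Then $\mathcal{V}_{G'} = \mathcal{V}'$ is linearly independent by construction, so $G' \in \mathcal{L}_n$, and moreover $v \in \Phi^+ \cap \mathcal{C}(G') = \overline{\mathcal{V}}_{G'}$.

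Now I apply Proposition \ref{phi} to $G' \in \mathcal{L}_n$: there is a playable route $P$ of $G'$ with $\phi(P)=v$, or a playable pair $(P_1,P_2)$ of $G'$ with $\phi(P_1,P_2)=v$. Since $E(G') \subseteq E(G)$, every playable route or playable pair of $G'$ is in particular a playable route or pair of $G$, so $v$ lies in the image of $\phi$ on the playable routes and pairs of $G$, as required.

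The main (minor) obstacle is justifying the Carath\'eodory step cleanly in this setting; the standard statement is that any element of a finitely generated convex cone lies in a subcone generated by a linearly independent subset of the given generators, which is exactly what is needed. Once that is in hand, the proof is an essentially one-line reduction to the already-established Proposition \ref{phi}. No new combinatorial analysis of paths and cycles is needed, because the hard work was done in the linearly independent case.
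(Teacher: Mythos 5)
Your proof is correct and follows essentially the same route as the paper: both directions reduce to Proposition \ref{phi} by locating $v \in \overline{\mathcal{V}}_G$ inside a subcone $\mathcal{C}(G')$ generated by a linearly independent subset of $\mathcal{V}_G$, so that $G' \in \mathcal{L}_n$, $G' \subseteq G$, and every playable route or pair of $G'$ is one of $G$. The only difference is cosmetic: the paper obtains $G'$ from a central triangulation of $\mathcal{P}(G)$ (i.e., a decomposition of $\mathcal{C}(G)$ into simplicial subcones), whereas you invoke the conical Carath\'eodory theorem, which packages that same decomposition as a standard fact.
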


\proof Let  $P(G)={\rm ConvHull}(0, \v(i, j, \e) \mid \v(i, j, \e) \in {\mathcal{V}}_G)$, and let $\Delta$ be a central triangulation of $P(G)$. For each $\sigma \in \Delta$ we define $\mathcal{C}(\sigma)=\mathcal{C}({G'})$, where  the vertex set of $\sigma$ is $\{0, \v(i, j, \e) \mid (i,j,\e) \in G'\}$, $G'\subset G$ and $G' \in \mathcal{L}_n$. Then, $${\overline{\mathcal{V}}}_G \subset \mathcal{C}(G)=\bigcup_{\sigma \in \Delta} \mathcal{C}(\sigma).$$ Thus, any  $v \in{\overline{\mathcal{V}}}_G$ belongs to some ${\mathcal{C}}({G'})$. Therefore, $v\in{\overline{\mathcal{V}}}_{G'}$,  for $G' \in \mathcal{L}_n$,  $G'\subset G$.  By Proposition \ref{phi},  there is a playable route $P$ or pair $(P_1, P_2)$ in $G'$, such that $v=\phi(P)$ or $v=\phi(P_1, P_2)$. But all playable routes and pairs of  $G'$ are also playable routes and pairs of  $G$. 
\qed

Propositions \ref{conn}, \ref{phi} and \ref{vertices} imply Proposition \ref{playable_route}.

\section{ The proof of the Reduction Lemma}
\label{sec:reduction_lemma}

This section is devoted to proving the Reduction Lemma (Lemma \ref{reduction_lemma}).  As we shall see in Section \ref{sec:forest}, the Reduction  Lemma is the ``secret force"  that makes   everything fall into its place for coned root polytopes. We start by   characterizing the root polytopes which are simplices, then in  Lemma \ref{equivalent} we prove that equations (\ref{eq1}) and (\ref{eq2}) are equivalent definitions for the root polytope $\mathcal{P}(G)$,  and finally we prove the Cone Reduction Lemma (Lemma \ref{cone_reduction_lemma}), which, together with Lemma \ref{equivalent} implies the Reduction Lemma.

  \begin{lemma} \label{simplex}  
 For a  graph $G$ on the vertex set $[n]$ with  $d$ edges, the polytope $\mathcal{P}(G)$ as defined by (\ref{eq1}) is a simplex if and only if $G$ is alternating and $G \in \mathcal{L}_n$. 
  \end{lemma}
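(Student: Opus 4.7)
The approach is to reduce the geometric question to a combinatorial one via Proposition~\ref{playable_route}, which parametrizes $\overline{\mathcal{V}}_G$ by playable routes and playable pairs of $G$. Once this parametrization is in hand, the task is to see exactly when $\mathrm{ConvHull}(\{0\}\cup\overline{\mathcal{V}}_G)$ collapses to $\mathrm{ConvHull}(\{0\}\cup\mathcal{V}_G)$ with $\{0\}\cup\mathcal{V}_G$ affinely independent.

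For the $(\Leftarrow)$ direction, I would assume $G$ is alternating and $G\in\mathcal{L}_n$. The alternating condition forbids any playable route of length $\geq 2$, since two consecutive edges in such a route must be incident to their shared vertex with opposite signs, contradicting alternation. Hence each playable route has length $1$ and contributes only $\phi(P)=\v(e)\in\mathcal{V}_G$. The only closed length-$1$ playable routes are loops, so every playable pair is a pair of loops $(i,i,+),(j,j,+)$, whose image under $\phi$ is $e_i+e_j=\tfrac{1}{2}(2e_i)+\tfrac{1}{2}(2e_j)$ (or $2e_i$ when $i=j$). This midpoint already lies in $\mathrm{ConvHull}(\{0\}\cup\mathcal{V}_G)$, so $\mathcal{P}(G)=\mathrm{ConvHull}(\{0\}\cup\mathcal{V}_G)$. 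Linear independence of $\mathcal{V}_G$ then makes $\{0\}\cup\mathcal{V}_G$ a set of $d+1$ affinely independent points, exhibiting $\mathcal{P}(G)$ as a $d$-simplex.

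For the $(\Rightarrow)$ direction, suppose $\mathcal{P}(G)$ is a simplex. From $\overline{\mathcal{V}}_G\subset\mathcal{C}(G)\subset\mathrm{span}(\mathcal{V}_G)$ we get $\dim\mathcal{P}(G)\leq\dim\mathrm{span}(\mathcal{V}_G)\leq d$. Since $\mathcal{P}(G)$ is a $d$-simplex and $\{0\}\cup\mathcal{V}_G\subset\mathcal{P}(G)$ supplies at most $d+1$ candidate vertices, equality must hold throughout: $\mathcal{V}_G$ is linearly independent (so $G\in\mathcal{L}_n$) and the vertex set of the simplex is exactly $\{0\}\cup\mathcal{V}_G$. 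If $G$ failed to be alternating, two edges $e_1,e_2$ incident to some common vertex with opposite signs would form a length-$2$ playable route, placing $\v(e_1)+\v(e_2)\in\mathcal{P}(G)$. Expanding this as a convex combination of simplex vertices and matching coefficients by linear independence of $\mathcal{V}_G$ forces $\alpha_{e_1}=\alpha_{e_2}=1$, making the coefficient of $0$ equal to $1-2=-1$, a contradiction.

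The main obstacle is the playable-pair bookkeeping in the $(\Leftarrow)$ direction: one must verify that once alternation kills every length-$\geq 2$ playable route, no playable pair other than a pair of loops can arise, and that the midpoints $e_i+e_j$ that do arise stay inside $\mathrm{ConvHull}(\{0\}\cup\mathcal{V}_G)$ rather than emerging as new vertices. Both of these reductions fall out directly from Proposition~\ref{playable_route} together with the no-sign-change consequence of alternation, so the argument is chiefly a careful combinatorial check after the parametrization of $\overline{\mathcal{V}}_G$ is fixed.
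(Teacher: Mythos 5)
Your route is essentially the paper's: both arguments reduce the lemma to Proposition~\ref{playable_route}, using linear independence of $\mathcal{V}_G$ to get affine independence of $\{0\}\cup\mathcal{V}_G$ and the alternating condition to control which positive roots can appear in $\mathcal{C}(G)$. In the ``if'' direction you are in fact more careful than the paper, which asserts that $\mathcal{C}(G)\cap\Phi^+=\mathcal{V}_G$ exactly when $G$ is alternating; that overlooks the playable pairs formed by two loops in different components (an alternating $G\in\mathcal{L}_n$ may well contain several loops, and then $e_i+e_j\in\overline{\mathcal{V}}_G\setminus\mathcal{V}_G$). Your observation that these extra roots are midpoints of $2e_i$ and $2e_j$, hence do not enlarge the hull, is precisely what is needed to make that direction airtight.

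In the converse there is a soft spot. First, you silently upgrade ``simplex'' to ``$d$-simplex''; this is the reading the lemma needs (for instance the graph with loops at $1,2$ and the edge $(1,2,+)$ is alternating, has dependent vectors, yet $\mathcal{P}(G)$ is a triangle) and it is how the lemma is later applied, but it should be stated. Second, the inference ``$\{0\}\cup\mathcal{V}_G\subset\mathcal{P}(G)$ supplies at most $d+1$ candidate vertices, so equality holds and the vertex set is exactly $\{0\}\cup\mathcal{V}_G$'' is not valid as written: the vertices of $\mathcal{P}(G)$ are a priori among $\{0\}\cup\overline{\mathcal{V}}_G$, which can be strictly larger than $\{0\}\cup\mathcal{V}_G$. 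What you need, and what is true, is that $0$ and each $\v(e)$, $e\in E(G)$, are themselves vertices: $0$ because the cone $\mathcal{C}(G)$ is pointed once $\mathcal{V}_G$ is independent, and $\v(e)$ because expressing $\v(e)$ as a convex combination of points of $\{0\}\cup\overline{\mathcal{V}}_G$, expanding each such point nonnegatively in the basis $\mathcal{V}_G$ and comparing coefficients forces every contributing point to be a positive multiple of $\v(e)$, hence equal to $\v(e)$, since no element of $\Phi^+$ is a proper positive multiple of another. Since a $d$-simplex has exactly $d+1$ vertices, the vertex set is then $\{0\}\cup\mathcal{V}_G$, and your coefficient-matching contradiction for a non-alternating pair of edges goes through. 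With that short patch the proof is correct.
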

                   
   \proof
   It follows from equation (\ref{eq1}) that for a minimal graph $G$ the polytope  $\mathcal{P}(G)$ is a simplex if and only if  the vectors corresponding to the edges of $G$ are linearly independent and  $\mathcal{C}(G) \cap \Phi^+=\mathcal{V}_G$.  
   
  The vectors corresponding to the edges of $G$ are linearly independent if and only if  $G \in \mathcal{L}_n$. 
  By Proposition \ref{playable_route},   $\mathcal{C}(G) \cap \Phi^+=\mathcal{V}_G$  if and only if  $G$ contains no edges incident to a vertex $v \in V(G)$ with opposite signs,  i.e. $G$ is alternating.   
   \qed

\begin{lemma} \label{equivalent}
For any  graph $G$ on the vertex set $[n]$,  
 
 $$\emph{ConvHull}(0,   \v(i, j, \e)  \mid  (i, j, \e)  \in \overline{G})=\mathcal{P}(C_n^+) \cap \mathcal{C}(G).$$ 
\end{lemma}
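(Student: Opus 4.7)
My plan is to prove the two inclusions separately.

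The containment of the left-hand side in the right is immediate from the definitions: each $\v(i,j,\e)$ with $(i,j,\e)\in\overline{G}$ lies in $\Phi^+\subseteq\mathcal{P}(C_n^+)$ and, by the very definition of $\overline{G}$, in $\mathcal{C}(G)$; together with $0\in\mathcal{P}(C_n^+)\cap\mathcal{C}(G)$ and the convexity of the intersection, the convex hull is contained in it.

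For the reverse inclusion I first reduce to the case $G\in\mathcal{L}_n$: given $x\in\mathcal{P}(C_n^+)\cap\mathcal{C}(G)$, Caratheodory's theorem for convex cones produces a linearly independent subfamily of $\{\v(e)\}_{e\in E(G)}$ in which $x$ has a nonnegative expansion, so $x\in\mathcal{C}(G')$ for some $G'\subseteq G$ with $G'\in\mathcal{L}_n$ (by Lemma \ref{fong}); since $\mathcal{C}(G')\subseteq\mathcal{C}(G)$ forces $\overline{\mathcal{V}}_{G'}\subseteq\overline{\mathcal{V}}_G$, establishing the inclusion for $G\in\mathcal{L}_n$ suffices. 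For such $G$ I would induct on $|E(G)|$, the base case $|E(G)|=0$ being trivial. By Krein--Milman it is enough to show that every extremal point $v$ of $\mathcal{Q}:=\mathcal{P}(C_n^+)\cap\mathcal{C}(G)$ lies in $\{0\}\cup\overline{\mathcal{V}}_G$. Since $G\in\mathcal{L}_n$ makes $\mathcal{C}(G)$ simplicial with relative boundary $\bigcup_{e\in E(G)}\mathcal{C}(G\setminus\{e\})$, two sub-cases arise. If $v$ lies on some facet $\mathcal{C}(G\setminus\{e\})$, then $v$ is extremal in the smaller polytope $\mathcal{P}(C_n^+)\cap\mathcal{C}(G\setminus\{e\})\subseteq\mathcal{Q}$ (extremality is inherited by convex subsets containing $v$), and the inductive hypothesis gives $v\in\{0\}\cup\overline{\mathcal{V}}_{G\setminus\{e\}}\subseteq\{0\}\cup\overline{\mathcal{V}}_G$. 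If instead $v$ lies in the relative interior of $\mathcal{C}(G)$ inside $V:=\mathrm{span}(\mathcal{V}_G)$, then $\mathcal{C}(G)$ imposes no local constraint at $v$, so $v$ is a vertex of the slice $\mathcal{P}(C_n^+)\cap V$; I would then argue that such a $v$ must lie in $\{0\}\cup\Phi^+$, which combined with $v\in\mathcal{C}(G)$ yields $v\in\{0\}\cup\overline{\mathcal{V}}_G$.

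The hardest step, and the main obstacle, is this last sub-case: ruling out that a vertex of the slice $\mathcal{P}(C_n^+)\cap V$ lying in the interior of the simplicial cone could be a generic intersection of facets of $\mathcal{P}(C_n^+)$ with $V$ outside $\{0\}\cup\Phi^+$. My plan to handle it is an ``extension to the outer boundary'' argument: extremality of $v$ together with $v$ being in the interior of $\mathcal{C}(G)$ forces $(1+\epsilon)v\notin\mathcal{P}(C_n^+)$ for every $\epsilon>0$, so $v$ lies on an outer facet $F$ of $\mathcal{P}(C_n^+)$ (one whose supporting hyperplane avoids $0$); the same extremality, via a tangent-space argument---any direction in $V\cap T_vF$ would furnish a segment of $\mathcal{Q}$ through $v$---forces $V$ to be transverse to $F$ at $v$; and since both $V$ and the vertex set of $F$ are spanned by positive roots of $C_n$, a direct inspection of the outer facets of $\mathcal{P}(C_n^+)$ as convex hulls of positive roots shows that such a transverse intersection can only occur at a positive-root point of $F$, completing the induction.
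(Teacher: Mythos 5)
Your overall plan is coherent up to the last sub-case: the easy inclusion, the Carath\'eodory reduction to $G\in\mathcal{L}_n$, and the induction showing every vertex of $\mathcal{Q}=\mathcal{P}(C_n^+)\cap\mathcal{C}(G)$ lies in $\{0\}\cup\overline{\mathcal{V}}_G$ are all fine. But the sub-case you yourself call the hardest is where all the content of the lemma sits, and it is not proved. Two concrete problems. First, what extremality of $v$ actually gives you (since $\mathcal{Q}$ agrees with $\mathcal{P}(C_n^+)\cap V$ near $v$) is that $V$ meets the affine hull of the \emph{minimal} face of $\mathcal{P}(C_n^+)$ whose relative interior contains $v$ only at $v$; it does not give transversality to an outer \emph{facet} when $v$ lies on a lower-dimensional outer face, so your ``inspection'' must range over all outer faces of $\mathcal{P}(C_n^+)$, whose structure you never describe. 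Second, and more seriously, the asserted inspection --- that a subspace spanned by positive roots can meet an outer face transversally only at a point of $\Phi^+$ --- is not a routine check. Already for $n=3$, take $V=\mathrm{span}(e_1-e_2,\,e_2+e_3)=\{x_1+x_2-x_3=0\}$: the segment from $2e_3$ to $e_1-e_3$ crosses $V$ exactly at the non-root point $\tfrac12(e_1+e_3)$, which moreover lies in the relative interior of the cone spanned by $e_1-e_2$ and $e_2+e_3$; your conclusion is saved only because $[2e_3,\,e_1-e_3]$ happens not to be a face of $\mathcal{P}(C_3^+)$, and verifying such statements requires the explicit facet description of $\mathcal{P}(C_n^+)$ (the cone inequalities together with the $\ell_1$-facets $\sum_i\epsilon_ix_i\le 2$) and a genuine sign/support analysis of which roots can lie on a common outer face. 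As written, your key claim is essentially as hard as the lemma itself, so the proof has a real gap.

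For comparison, the paper argues in the opposite direction: it centrally triangulates $\sigma(\overline{G})=\mathrm{ConvHull}(0,\v(i,j,\e)\mid (i,j,\e)\in\overline{G})$, observes that each maximal cell corresponds to an alternating graph $F\in\mathcal{L}_n$ with $\overline{F}=F$, and then uses the fact that for alternating $F$ there is no cancellation among the generators (at each vertex all incident edges contribute with the same sign), so a point $\sum_e c_e\v(e)$ of $\mathcal{C}(F)$ has $\ell_1$-norm $2\sum_e c_e$, while every point of $\mathcal{P}(C_n^+)$ has $\ell_1$-norm at most $2$; this pins down $\mathcal{P}(C_n^+)\cap\mathcal{C}(F)=\sigma(F)$, and taking the union over the cells of the triangulation gives the lemma. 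If you want to rescue your route, this no-cancellation identity on alternating subcones is precisely the fact your ``direct inspection'' would have to rediscover: it is what forces the outer boundary of $\mathcal{P}(C_n^+)$ inside each simplicial root cone to be the far facet of the corresponding simplex, and hence forces the vertices of your slice to be positive roots.
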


\proof For a graph $H$ on the vertex set $[n]$, let $\sigma(H)=\textrm{ConvHull}(0, \v(i, j, \e)  \mid  (i, j, \e)  \in   H)$. Then, $\sigma(\overline{G} )=\textrm{ConvHull}(0, \v(i, j, \e)  \mid  (i, j, \e)  \in \overline{G})$. Let $\sigma(\overline{G} )$ be a $d$-dimensional polytope for some $d \leq n$ and consider any central triangulation of it:  $\sigma(\overline{G} )=\cup_{F \in \mathcal{F}}\sigma(F)$, where $\{\sigma(F)\}_{F \in \mathcal{F}}$ is a set of $d$-dimensional simplices with disjoint interiors, $E(F) \subset E(\overline{G})$, $F \in \mathcal{F}$.  Since $\sigma(\overline{G} )=\cup_{F \in \mathcal{F}}\sigma(F)$ is a central triangulation, it follows that $\sigma(F)=\sigma(\overline{G} ) \cap \mathcal{C}(F)$, for $F \in \mathcal{F}$, and $\mathcal{C}(G)=\cup_{F \in \mathcal{F}}  \mathcal{C}(F)$.

Since $\sigma(F)$, $F \in \mathcal{F}$, is a  $d$-dimensional simplex, it follows that $F\in \mathcal{L}_n$ and has  $d$ edges. Furthermore, $F \in \mathcal{F}$ is  alternating, as otherwise there are edges $(i, j, \e_1), (j, k, \e_2) \in E(F) \subset E(\overline{G})$ incident to $j$ with opposite signs, and while $\v(i, j, \e_1)+\v (j, k, \e_2) \in \sigma(\overline{G} ) \cap \mathcal{C}(F)$, $\v(i, j, \e_1)+\v (j, k, \e_2) \not \in \sigma(F)$, contradicting that  $\cup_{F \in \mathcal{F}}\sigma(F)$ is a central triangulation of $\sigma(\overline{G} )$. Thus,  $\overline{F}=F$, and  $\sigma(F)=\sigma(\overline{F})$. It is clear that  
 $\sigma(\overline{F})=\textrm{ConvHull}(0, \v(i, j, \e)  \mid  (i, j, \e)  \in \overline{F})\subset \mathcal{P}(C_n^+) \cap \mathcal{C}(F),$ $F \in \mathcal{F}$. Since if $x=(x_1, \ldots, x_{n+1})$ is in the facet of   $\sigma(\overline{F})$ opposite the origin, then  $|x_1|+\cdots+|x_{n+1}|=2$ and for any point  $x=(x_1, \ldots, x_{n+1}) \in \mathcal{P}(C_n^+) $, $|x_1|+\cdots+|x_{n+1}|\leq 2$ it follows that  $\mathcal{P}(C_n^+) \cap \mathcal{C}(F)\subset \sigma(\overline{F})$. Thus, $\sigma(\overline{F})=\mathcal{P}(C_n^+) \cap \mathcal{C}(F)$.  Finally, $\textrm{ConvHull}(0, \v(i, j, \e)  \mid  (i, j, \e)  \in \overline{G})=\sigma(\overline{G} )=\cup_{F \in \mathcal{F}}\sigma(F)=\cup_{F \in \mathcal{F}}\sigma(\overline{F})
=\cup_{F \in \mathcal{F}}(\mathcal{P}(C_n^+) \cap \mathcal{C}(F))=\mathcal{P}(C_n^+) \cap (\cup_{F \in \mathcal{F}}\mathcal{C}(F)) = \mathcal{P}(C_n^+) \cap \mathcal{C}(G)$ as desired.

\qed

    \begin{lemma} \label{cone_reduction_lemma} \textbf{(Cone Reduction Lemma)} 
Given   a graph $G_0 \in \mathcal{L}_n$ with $d$ edges, let   $G_1, G_2, G_3$ be the graphs described  by  any one of the equations (\ref{graphs1})-(\ref{graphs6}).   Then  $G_1, G_2, G_3 \in \mathcal{L}_n$,
$$\mathcal{C}(G_0)=\mathcal{C}(G_1) \cup \mathcal{C}(G_2)$$   where all cones  $\mathcal{C}(G_0), \mathcal{C}(G_1), \mathcal{C}(G_2)$ are   $d$-dimensional and    
$$\mathcal{C}(G_3)=\mathcal{C}(G_1) \cap \mathcal{C}(G_2)  \mbox{   is $(d-1)$-dimensional. } $$
\end{lemma}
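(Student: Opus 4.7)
The plan is to unify the six reduction rules (\ref{graphs1})--(\ref{graphs6}) via a single vector identity: in each of them, the three affected edges $e_a, e_b, e_c$ satisfy
\[
\v(e_c) \;=\; \v(e_a) + \v(e_b).
\]
I would verify this case by case --- for instance (\ref{graphs1}) gives $(e_i-e_k)=(e_i-e_j)+(e_j-e_k)$, (\ref{graphs5}) gives $2e_i=(e_i-e_j)+(e_i+e_j)$, and (\ref{graphs6}) gives $e_i+e_j=(e_i-e_j)+2e_j$, with the remaining three cases analogous. Writing $v_a=\v(e_a)$, $v_b=\v(e_b)$, $v_c=\v(e_c)$, and $\mathcal{V}'=\mathcal{V}_{G_0}\setminus\{v_a,v_b\}$, one reads off $\mathcal{V}_{G_1}=\mathcal{V}'\cup\{v_a,v_c\}$, $\mathcal{V}_{G_2}=\mathcal{V}'\cup\{v_b,v_c\}$, and $\mathcal{V}_{G_3}=\mathcal{V}'\cup\{v_c\}$. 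Since $\mathcal{V}_{G_0}$ is linearly independent, the span of $\mathcal{V}'$ is complementary to that of $\{v_a,v_b\}$, so the entire lemma collapses to a two-dimensional statement about the pointed cone $\langle v_a, v_b\rangle$, with the shared piece $\mathcal{C}(\mathcal{V}')$ carried along as a direct summand.

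First I would verify $G_1,G_2,G_3\in\mathcal{L}_n$. For $G_1$ and $G_2$, replacing $v_b$ by $v_a+v_b$ or $v_a$ by $v_a+v_b$ is an invertible change of basis that preserves linear independence. For $G_3$, independence of $\mathcal{V}'\cup\{v_c\}$ follows because any nontrivial relation $v_c=\sum_{v\in\mathcal{V}'} c_v v$ would rearrange to a nontrivial relation $v_a+v_b-\sum_{v} c_v v=0$ in $\mathcal{V}_{G_0}$. The dimension assertions then come from the cardinalities $|\mathcal{V}_{G_i}|=d$ for $i\in\{0,1,2\}$ and $|\mathcal{V}_{G_3}|=d-1$.

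For the union $\mathcal{C}(G_0)=\mathcal{C}(G_1)\cup\mathcal{C}(G_2)$, I would write any $x\in\mathcal{C}(G_0)$ as $x=sv_a+tv_b+w$ with $s,t\geq 0$ and $w\in\mathcal{C}(\mathcal{V}')$, and split on whether $s\geq t$ or $t\geq s$: in the first case $x=(s-t)v_a+t v_c+w\in\mathcal{C}(G_1)$, and in the second $x=(t-s)v_b+s v_c+w\in\mathcal{C}(G_2)$. The reverse inclusion is automatic since $v_c=v_a+v_b\in\mathcal{C}(G_0)$. For the intersection, $\mathcal{C}(G_3)\subseteq\mathcal{C}(G_1)\cap\mathcal{C}(G_2)$ is immediate; conversely, I would take $x$ in the intersection, express it both as $\alpha v_a+\gamma v_c+w_1$ and as $\delta v_b+\eta v_c+w_2$ with nonnegative coefficients, substitute $v_c=v_a+v_b$, and match coefficients on the linearly independent set $\mathcal{V}_{G_0}$. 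The system $\alpha+\gamma=\eta$, $\gamma=\delta+\eta$, $w_1=w_2$ forces $\alpha=\delta=0$ and $\gamma=\eta$, placing $x\in\mathcal{C}(G_3)$.

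The main obstacle is organizational rather than computational: recognizing that the six syntactically distinct reduction rules all rest on the one identity $v_c=v_a+v_b$, and in particular that the loop cases (\ref{graphs5}) and (\ref{graphs6}), where a doubled vector $2e_k$ enters, fit the same pattern as the four ``diagonal'' cases. Once this is in place the proof becomes a single two-dimensional convex-geometry argument, and the hypothesis $G_0\in\mathcal{L}_n$ is used only in the coefficient-matching step that pins down the intersection.
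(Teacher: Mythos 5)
Your proof is correct and is essentially the argument the paper intends: the paper simply cites the type $A_n$ Cone Reduction Lemma (\cite[Lemma 7]{kar}), whose proof is exactly this coefficient-splitting argument ($s\geq t$ versus $t\geq s$ for the union, matching coefficients against the linearly independent set $\mathcal{V}_{G_0}$ for the intersection). Your explicit observation that all six rules (\ref{graphs1})--(\ref{graphs6}) rest on the single identity $\v(e_c)=\v(e_a)+\v(e_b)$ is precisely the reason the type $A_n$ proof carries over verbatim, so the approaches coincide.
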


    The proof of Lemma  \ref{cone_reduction_lemma} is the same as that of the Cone Reduction Lemma in the type $A_n$ case; see \cite[Lemma 7]{kar}.
\medskip
  
         \noindent \textit{Proof of the  Reduction Lemma (Lemma \ref{reduction_lemma}).} Straightforward corollary of Lemmas \ref{equivalent} and \ref{cone_reduction_lemma}.\qed

\medskip

    \section{Volumes of root polytopes and the number of monomials in reduced forms}                   
                       \label{sec:forest}
                       
                       In this section we use the Reduction Lemma to establish the link between the volumes of  root polytopes and the number of monomials in reduced forms. In fact we shall see that if we know either of these quantities, we also know the other.

            \begin{proposition} \label{2/n!} Let $G_0 \in \mathcal{L}_n$ be a connected graph on the vertex set $[n]$ with $n$ edges, and let $\mathcal{T}^\mathcal{S}$ be an $\mathcal{S}$-reduction tree with root labeled  $G_0$. Then,  $$\vol_n(\mathcal{P}(G_0))=\frac{2 f(G_0)}{n!},$$ where $f(G_0)$ denotes the number of   leaves of $\mathcal{T}^\mathcal{S}$ labeled by graphs with $n$ edges.
\end{proposition}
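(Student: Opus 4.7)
The plan is to induct on the depth of the chosen $\mathcal{S}$-reduction tree $\mathcal{T}^\mathcal{S}$ rooted at $G_0$, using the Reduction Lemma (Lemma \ref{reduction_lemma}) to split volumes additively along every node.

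For the base case, suppose $G_0$ is itself a leaf of $\mathcal{T}^\mathcal{S}$, so no reduction applies. An exhaustive case check---depending on which endpoint of each edge is smaller, and on whether the edges involved are positive, negative, parallel, or loops---shows that the rules (\ref{graphs1})--(\ref{graphs6}) together capture every possible pair of distinct edges of $G_0$ that meet at a common vertex with opposite incidences. Therefore a leaf is precisely an alternating graph, and Lemma \ref{simplex} identifies $\mathcal{P}(G_0)$ as an $n$-simplex. Since $G_0$ is connected with $n$ vertices and $n$ edges, it is unicyclic, and Lemma \ref{fong} forces the unique cycle to contain an odd number of positive edges. Stripping tree leaves of $G_0$ one at a time (each such step deletes a row and a column of the matrix with columns $\mathcal{V}_{G_0}$, containing a single $\pm 1$ in that row) reduces $|\det(\mathcal{V}_{G_0})|$ to the signed incidence determinant of the cycle alone; a direct computation then yields $|\det(\mathcal{V}_{G_0})|=2$ (an appropriate signed sum of the cycle's column vectors collapses to $\pm 2e_v$ by the odd-positive-edge condition). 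Hence $\vol_n(\mathcal{P}(G_0))=2/n!$, and since $f(G_0)=1$ the base case holds.

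For the inductive step, let $G_1,G_2,G_3$ be the children of $G_0$ in $\mathcal{T}^\mathcal{S}$ produced by some rule among (\ref{graphs1})--(\ref{graphs6}). The Reduction Lemma gives $G_1,G_2\in\mathcal{L}_n$ with $n$ edges, $\mathcal{P}(G_0)=\mathcal{P}(G_1)\cup\mathcal{P}(G_2)$, and $\mathcal{P}(G_1)\cap\mathcal{P}(G_2)=\mathcal{P}(G_3)$ of dimension $n-1$, so
\[
\vol_n(\mathcal{P}(G_0))=\vol_n(\mathcal{P}(G_1))+\vol_n(\mathcal{P}(G_2)).
\]
I would next verify rule by rule that $G_1$ and $G_2$ remain connected: in every reduction, the deleted and newly introduced edges lie on the same three-vertex set $\{i,j,k\}$ (or two-vertex set for (\ref{graphs5}) and (\ref{graphs6})), so any path of $G_0$ traversing the deleted edge can be rerouted through the surviving two edges of the local triangle. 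Finally, $G_3$ has only $n-1$ edges, and since no reduction can raise the edge count, no descendant of $G_3$ in $\mathcal{T}^\mathcal{S}$ is an $n$-edge leaf; hence $f(G_0)=f(G_1)+f(G_2)$. Applying the inductive hypothesis to the subtrees rooted at $G_1$ and $G_2$ yields $\vol_n(\mathcal{P}(G_0))=\tfrac{2f(G_0)}{n!}$, completing the induction.

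The main obstacle is the base case volume computation, namely showing $|\det(\mathcal{V}_G)|=2$ for every connected alternating unicyclic graph $G\in\mathcal{L}_n$ on $n$ vertices. The outer induction on reduction-tree depth is immediate from the Reduction Lemma, and the connectedness bookkeeping together with the identity $f(G_0)=f(G_1)+f(G_2)$ are routine; but the determinant evaluation, which depends essentially on the parity condition in Lemma \ref{fong}, is the technical heart of the argument and must be carried out carefully for each sign configuration on the cycle.
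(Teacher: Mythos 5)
Your proposal is correct and follows essentially the same route as the paper: both distribute $\vol_n(\mathcal{P}(G_0))$ over the $n$-edge leaves of the reduction tree via the Reduction Lemma, observe that such leaves are alternating graphs in $\mathcal{L}_n$ (hence simplices by Lemma \ref{simplex}), and evaluate each simplex volume as $2/n!$ by stripping degree-one vertices and computing the determinant of the remaining cycle's incidence vectors, where the odd-positive-edge condition (equivalently $C\in\mathcal{L}_n$) forces the value $\pm 2$. The only cosmetic differences are your explicit induction on tree depth with $f(G_0)=f(G_1)+f(G_2)$ in place of the paper's direct summation over leaves, and your signed row combination collapsing to $\pm 2e_v$ where the paper instead expands the cycle matrix along a row and excludes determinant $0$ by linear independence.
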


\begin{proof} By the Reduction Lemma (Lemma \ref{reduction_lemma}) $\vol_n(\mathcal{P}(G_0))=\sum_G \vol_n(\mathcal{P}(G))$, where $G$ runs over the  leaves of $\mathcal{T}^\mathcal{S}$ labeled by graphs with $n$ edges. We now prove that for each $G$ with $n$ edges  labeling a leaf  of $\mathcal{T}^\mathcal{S}$  with root labeled  $G_0$, $\vol_n(\mathcal{P}(G))=\frac{2}{n!}$. Since $G_0 \in \mathcal{L}_n$ is a connected graph on the vertex set $[n]$ with $n$ edges, so are all its successors with $n$ edges. If $G$ labels a leaf of $\mathcal{T}^\mathcal{S}$, then $G$ satisfies the conditions of Lemma \ref{simplex}. Thus, $\mathcal{P}(G)$ is a simplex.

The volume of  $\mathcal{P}(G)$  can be calculated by calculating the determinant $\det(M)$ of the matrix $M$ whose rows are the vectors $\v(e)$,  $e \in E(G)$, written in the standard basis.  If $v \in [n]$ is a vertex of degree $1$ in $G$, the $v^{th}$ column  contains a single $1$ or $-1$ in the row corresponding to the edge incident to $v$. Let this row be the ${v_r}^{th}$.  Delete the   $v^{th}$ column and $v_r^{th}$ row from $M$ and delete the edge incident to $v$ in $G$ obtaining a new graph. Successively identify the leaves in the new graphs and delete the corresponding columns and rows from their matrices until we obtain a graph $C$ that is a simple cycle and the corresponding  matrix $M'$. The rows of $M'$ are the vectors $\v(e)$,  $e \in E(C)$.  By Laplace expansion, $|\det(M)|=|\det(M')|$.  Since $G \in \mathcal{L}_n$, so is $C \in \mathcal{L}_n$.  Thus,  $\det(M')\neq 0$. Expand $M'$ by any of its rows obtaining matrices $M_1$ and $M_2$. Then we get $|\det(M')|=|\det(M_1)|+|\det(M_2)|=2$, since both $M_1$ and $M_2$ are such that their entries are all $0, 1$ or $-1$, each row (column) except one has exactly two nonzero entries, and the remaining one exactly one nonzero entry. 
Thus, $\vol_n(\mathcal{P}(G))=\det(M)/n!=2/n!$.

\end{proof}

 A general version of Proposition \ref{2/n!} can be proved for any connected $G_0 \in \mathcal{L}_n$ using the following lemma.
 
      \begin{lemma} \label{2^k/d!} Let $G\in \mathcal{L}_n$ be an alternating  graph on the vertex set $[n]$ with $d$ edges, with $c$ connected components of which   $k\leq c$  contain simple cycles. Then,  $$\vol_d(\mathcal{P}(G))=\frac{2^k}{d!}.$$  
 \end{lemma}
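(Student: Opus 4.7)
The strategy is to generalize the determinant-expansion argument from Proposition~\ref{2/n!} by exploiting the block structure of the edge-vector matrix induced by the decomposition of $G$ into vertex-disjoint components, then evaluating each block via iterated leaf deletion together with a Laplace expansion on the unique cycle of each unicyclic component.

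First, by Lemma~\ref{fong} each component of $G$ is either a tree or a unicyclic graph whose simple cycle has an odd number of positive edges. Label the components $T_1,\dots,T_{c-k}$ (trees, with $d_i$ edges and $d_i+1$ vertices) and $C_1,\dots,C_k$ (unicyclic, with $d_i$ edges and $d_i$ vertices), so $d=\sum d_i$. Since $G$ is alternating and in $\mathcal{L}_n$, Lemma~\ref{simplex} yields that $\mathcal{P}(G)$ is a $d$-dimensional simplex with vertex set $\{0\}\cup\{\v(e):e\in E(G)\}$. I would choose one vertex $r_i$ from each tree component to discard and let $V'$ be the union of the remaining vertices, so $|V'|=d$, and write the volume as
$$\vol_d(\mathcal{P}(G))=\frac{|\det \tilde M|}{d!},$$
where $\tilde M$ is the $d\times d$ matrix whose row indexed by $e\in E(G)$ records the coordinates of $\v(e)$ on $V'$. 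Vertex-disjointness of components makes $\tilde M$ block-diagonal, so $|\det \tilde M|=\prod_{i}|\det \tilde M_i|$.

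For each tree block $\tilde M_{T_i}$, I would iteratively pick a leaf $\ell\neq r_i$: its column carries a single $\pm 1$ entry, in the row of the unique edge incident to $\ell$, and Laplace expansion collapses the computation to the tree block for $T_i\setminus\{\ell\}$; induction on $d_i$ gives $|\det \tilde M_{T_i}|=1$. For each unicyclic block $\tilde M_{C_j}$, the same leaf deletion first strips the tree parts hanging off the cycle, each step contributing a factor of $\pm 1$, and reduces to the $\ell_j\times\ell_j$ matrix of the cycle $Z_j$ itself. Picking any edge $e$ of $Z_j$ and Laplace-expanding along its row---whose two nonzero $\pm 1$ entries sit in the columns of the two endpoints of $e$---yields two minors, each corresponding to the matrix of a path obtained by deleting one endpoint of $e$, and hence equal to $\pm 1$ by the tree argument. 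The hypothesis that $Z_j$ contains an odd number of positive edges is precisely the parity condition under which the two cofactors reinforce rather than cancel, so $|\det \tilde M_{C_j}|=2$.

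Multiplying over components yields $|\det \tilde M|=1^{c-k}\cdot 2^{k}=2^{k}$ and hence $\vol_d(\mathcal{P}(G))=2^{k}/d!$. The main obstacle is the sign bookkeeping in the cycle step: one must verify that the odd parity of positive edges on $Z_j$ really does align the signs of the two Laplace cofactors, rather than being a coincidence of low-dimensional examples. This is the same combinatorial fact that underlies the linear-independence criterion of Lemma~\ref{fong}, and I would handle it by fixing a cyclic ordering of the vertices of $Z_j$, tracking the sign contribution of each edge as one traverses the cycle from one endpoint of $e$ around to the other, and checking that an odd number of sign flips occurs precisely when the count of positive edges of $Z_j$ is odd.
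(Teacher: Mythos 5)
Your proposal is correct and follows essentially the same route as the paper's proof: a block-diagonal decomposition of the edge-vector matrix by connected components, determinant $1$ for each tree block via iterated leaf deletion, and determinant $2$ for each unicyclic block via Laplace expansion along a row of the cycle matrix, giving $\vol_d(\mathcal{P}(G))=2^k/d!$. The only minor difference is in the cycle step: the paper avoids the sign bookkeeping you flag as the main obstacle by noting that the two cofactors are each $\pm 1$, so the cycle determinant lies in $\{-2,0,2\}$, and it is nonzero because the cycle is in $\mathcal{L}_n$ (linear independence), which forces $|\det|=2$ without a direct parity-of-positive-edges computation.
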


\proof Let $M^a$   be the matrix whose rows are the vectors $\v(i, j, \e)$,  $(i, j, \e) \in E(G)$, written in the standard basis. Matrix $M^a$ is a $d \times n $ matrix. The rows and columns of $M^a$ can be rearraged so that it has a block form in which the blocks $B_1, \ldots, B_c$ on the diagonal correspond to the connected components of $G$, while all other blocks are $0$.  Since $G\in \mathcal{L}_n$ satisfies the conditions of Lemma \ref{simplex},   $\mathcal{P}(G)$ is a simplex, $\vol_{d}(\mathcal{P}(G))\neq 0$ and $\vol_{d}(\mathcal{P}(G))$ can be calculated by dropping some $n-d$ columns of $M^a$  such that the resulting matrix $M$ has nonzero determinant. Then, $\vol_d(\mathcal{P}(G))=|\det(M)|/d!$. Drop a column $b_i$  from the block matrix $B_i$ if the block $B_i$ corresponds to a tree on $m$ vertices, obtaining  matrix $B_i'$ with nonzero determinant. Then, $|\det(B_i')|=1$. If $B_i$  corresponds to a connected component  of $G_0$ with $m$ vertices and $m$ edges, then  $B_i'=B_i$ and  $|\det(B_i)|=2$. Since there are $n-d$ connected components which are trees, if we drop the columns $b_i$ from $M^a$ for all blocks $B_i$  corresponding to a tree obtaining a matrix $M$, then $\vol_d(\mathcal{P}(G))=\frac{|\det(M)|}{d!}$. Since $M$ has a special block form with blocks $B_i'$ along diagonal and zeros otherwise, we have that $|\det(M)|=| \prod_{i=1}^c det(B_i')|=2^k$.

\qed
 
 \begin{proposition} \label{2^k}
  Let $G_0 \in \mathcal{L}_n$ be a   graph on the vertex set $[n]$ with $d$ edges, with $c$ connected components of which   $k\leq c$  contain  cycles.  Let $\mathcal{T}^\mathcal{S}$ be an $\mathcal{S}$-reduction tree with root labeled  $G_0$. Then,  $$\vol_d(\mathcal{P}(G_0))=\frac{2^k f(G_0)}{d!},$$ where $f(G_0)$ denotes the number of   leaves of $\mathcal{T}^\mathcal{S}$ labeled by graphs with $d$ edges.
 \end{proposition}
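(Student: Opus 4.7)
The plan is to reduce the claim to Lemma \ref{2^k/d!} via the Reduction Lemma, paralleling the proof of Proposition \ref{2/n!}, but keeping track of the invariants $n, d, c, k$ along the $\mathcal{S}$-reduction tree $\mathcal{T}^\mathcal{S}$.

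At each internal node of $\mathcal{T}^\mathcal{S}$ labeled by a graph $G_0$ with $d$ edges, the Reduction Lemma produces children $G_1, G_2, G_3$ with $\mathcal{P}(G_3)$ only $(d-1)$-dimensional, so contributing $0$ to $\vol_d$, while $\mathcal{P}(G_1)$ and $\mathcal{P}(G_2)$ are $d$-dimensional with disjoint interiors and union $\mathcal{P}(G_0)$. Hence $\vol_d(\mathcal{P}(G_0))=\vol_d(\mathcal{P}(G_1))+\vol_d(\mathcal{P}(G_2))$, and iterating down the tree gives $\vol_d(\mathcal{P}(G_0))=\sum_G \vol_d(\mathcal{P}(G))$, where $G$ ranges over the leaves of $\mathcal{T}^\mathcal{S}$ labeled by graphs with $d$ edges.

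Next I would show that every such leaf $G$ is an alternating graph in $\mathcal{L}_n$ on $[n]$ with $d$ edges, $c$ components, $k$ of which contain cycles. Membership in $\mathcal{L}_n$ and the vertex count are preserved along $\mathcal{T}^\mathcal{S}$ by the Reduction Lemma and by direct inspection of (\ref{graphs1})-(\ref{graphs6}). A leaf must be alternating, because each reduction rule fires precisely on a pair of edges incident at a common vertex with opposite signs (with a loop counted as positively incident to its vertex), so a graph admitting no reduction is alternating. Only the $G_3$-move decreases the edge count, so a leaf with $d$ edges arises via a sequence of $G_1/G_2$-moves.

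The main combinatorial point is that the partition of $[n]$ into connected components is unchanged in passing from $G_0$ to $G_1$ or $G_2$: a short case check on (\ref{graphs1})-(\ref{graphs6}) shows that the two or three vertices involved in the reduced edges remain mutually connected in both $G_1$ and $G_2$, possibly via a newly created loop in patterns (\ref{graphs5}) and (\ref{graphs6}), while all other edges of the graph are untouched. Hence the component count $c$ is invariant along $\mathcal{T}^\mathcal{S}$. By Lemma \ref{fong}, every $H \in \mathcal{L}_n$ has components that are trees or unicyclic, which yields the identity $k=d-n+c$; invariance of $n, d, c$ therefore forces invariance of $k$. Applying Lemma \ref{2^k/d!} to each of the $f(G_0)$ alternating leaves $G$ with $d$ edges gives $\vol_d(\mathcal{P}(G))=2^k/d!$, and summing produces $\vol_d(\mathcal{P}(G_0))=2^k f(G_0)/d!$, as claimed. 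The only nonroutine step is the invariance of $c$, but it amounts to graph-theoretic bookkeeping over the six reduction patterns and requires no genuine calculation.
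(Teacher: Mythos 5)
Your proof is correct and follows the same route as the paper, which simply states that the argument of Proposition \ref{2/n!} carries over in view of Lemma \ref{2^k/d!}: decompose $\vol_d(\mathcal{P}(G_0))$ over the $d$-edge leaves via the Reduction Lemma and apply Lemma \ref{2^k/d!} to each alternating leaf. Your explicit verification that leaves are alternating and that $n,d,c$ (hence $k=d-n+c$, by Lemma \ref{fong}) are preserved along $G_1/G_2$ moves is exactly the bookkeeping the paper leaves implicit.
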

 
 The proof of Proposition \ref{2^k} proceeds analogously to Proposition \ref{2/n!}, in view of Lemma \ref{2^k/d!}.

      \begin{corollary}
Let $G_0\in \mathcal{L}_n$  and let $m^\mathcal{S}[G_0]$ be the monomial corresponding to it. Then for any reduced form $P_n^\mathcal{S}$  of $m^\mathcal{S}[G_0]$, the value of $P_n^\mathcal{S}(x_{ij}=y_{ij}=z_i=1, \beta=0)$ is independent of the order of  reductions performed. 
\end{corollary}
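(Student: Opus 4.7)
The plan is to reinterpret the specialization $P_n^{\mathcal{S}}(x_{ij}=y_{ij}=z_i=1,\beta=0)$ as a count of certain leaves of the reduction tree, and then identify that count with the geometric invariant supplied by Proposition \ref{2^k}. Since the target invariant depends only on $G_0$ (via $\mathcal{P}(G_0)$), this will immediately give the independence statement.

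First, I would fix an arbitrary $\mathcal{S}$-reduction tree $\mathcal{T}^\mathcal{S}$ rooted at $G^\mathcal{S}[G_0]$, corresponding to a choice of reduction order producing some reduced form $P_n^{\mathcal{S}}$. At each internal node, the three children $G_1,G_2,G_3$ are determined by one of the rules (\ref{graphs1})--(\ref{graphs6}), and in the algebra $\t$ the $G_3$--term always appears with an explicit factor of $\beta$. Therefore the monomial contributed to $P_n^{\mathcal{S}}$ by a leaf $L$ is $\beta^{r(L)} m^{\mathcal{S}}[L]$, where $r(L)$ is the number of times the root-to-$L$ path descended into a third child. After substituting $x_{ij}=y_{ij}=z_i=1$ each $m^{\mathcal{S}}[L]$ becomes $1$, so
\begin{equation*}
P_n^{\mathcal{S}}(x_{ij}=y_{ij}=z_i=1,\beta)=\sum_{L\text{ leaf of }\mathcal{T}^\mathcal{S}} \beta^{r(L)}.
\end{equation*}
Setting $\beta=0$ kills every contribution with $r(L)>0$, leaving exactly those leaves whose root-to-leaf path never used a $G_3$--child.

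Next I would observe that among the three children in every reduction rule (\ref{graphs1})--(\ref{graphs6}), the graphs $G_1$ and $G_2$ have the same number of edges as $G_0$ while $G_3$ has one fewer edge. Consequently, the leaves reached without ever descending into a $G_3$--child are precisely the leaves whose label has the same edge count $d=|E(G_0)|$ as the root. In the notation of Proposition \ref{2^k}, this count is exactly $f(G_0)$, so
\begin{equation*}
P_n^{\mathcal{S}}(x_{ij}=y_{ij}=z_i=1,\beta=0)=f(G_0).
\end{equation*}
By Proposition \ref{2^k}, $f(G_0)=\frac{d!\,\vol_d(\mathcal{P}(G_0))}{2^k}$, where $k$ is the number of connected components of $G_0$ containing a cycle; this is manifestly a function of $G_0$ alone and does not involve the choice of $\mathcal{T}^\mathcal{S}$.

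There is no real obstacle here: the only point requiring care is the bookkeeping that connects the $\beta$--weight of a leaf to the number of $G_3$--descents along its path, which follows transparently from inspecting the reduction rules (\ref{red1})--(\ref{red6}) and noting that the $\beta$--term is always the edge-deleting one. Once that is in hand, the corollary is a direct consequence of Proposition \ref{2^k}.
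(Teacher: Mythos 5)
Your proof is correct and follows essentially the same route as the paper: the paper's own (terser) argument likewise notes that $P_n^\mathcal{S}(x_{ij}=y_{ij}=z_i=1,\beta=0)=f(G_0)$ and then invokes Proposition \ref{2^k} to conclude that this count is determined by $\vol_d(\mathcal{P}(G_0))$, hence by $G_0$ alone. Your additional bookkeeping identifying the $\beta$-exponent of a leaf with the number of $G_3$-descents, and hence the $\beta=0$ survivors with the leaves having $d$ edges, just makes explicit what the paper leaves implicit.
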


\proof Note that $P_n^\mathcal{S}(x_{ij}=y_{ij}=1, \beta=0)=f(G_0),$ as defined in Proposition \ref{2^k}. Since $\vol_d(\mathcal{P}(G_0))$ is only dependent on $G_0$,  the value of $P_n^\mathcal{S}(x_{ij}=y_{ij}=z_i=1, \beta=0)$ is independent of the particular reductions performed. 
\qed

\medskip

With analogous methods the following proposition about reduced forms in $\c$ can also be proved. 
     
      \begin{proposition} \label{ce}
Let $G_0\in \mathcal{L}_n$  and let $m^\mathcal{S}[G_0]=m^{\mathcal{B}^c}[G_0]$ be the monomial corresponding to it. Then for any reduced form $P_n^{\mathcal{B}^c}$ of $m^\mathcal{S}[G_0]$ in $\c$, the value of $P_n^{\mathcal{B}^c}(x_{ij}=y_{ij}=z_i=1)$ is independent of the order of  reductions performed. 
\end{proposition}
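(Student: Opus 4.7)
The plan is to mirror the proof of Proposition~\ref{2^k} and its corollary, working with a $\mathcal{B}^c$-reduction tree $\mathcal{T}^{\mathcal{B}^c}$ rooted at $G_0$. A reduction from one of relations (5)-(8) of $\b$ applied to a graph $G$ produces the two children $G_1, G_2$ of the corresponding equation (\ref{graphs1})-(\ref{graphs4}) (but not the lower-dimensional $G_3$, since $\c$ has no $\beta$), and a reduction from relation (9) applied to a pair $(i,j,-), (j,j,+)$ produces three children corresponding to $z_i x_{ij}$, $y_{ij} z_i$, and $z_j y_{ij}$, which are exactly the top-dimensional children obtained by composing (\ref{graphs6}) and then (\ref{graphs5}) in $\t$. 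Since no edge is ever removed, every leaf of $\mathcal{T}^{\mathcal{B}^c}$ has $d=|E(G_0)|$ edges, so $P_n^{\mathcal{B}^c}(x_{ij}=y_{ij}=z_i=1)$ equals the number of leaves of $\mathcal{T}^{\mathcal{B}^c}$, counted with multiplicity. The Reduction Lemma, applied once per $2$-term step and twice per $3$-term step, yields
\[
\vol_d(\mathcal{P}(G_0)) = \sum_{G\text{ leaf}} \vol_d(\mathcal{P}(G)).
\]

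Next I would analyze each leaf. A $\c$-leaf $G$ is alternating except possibly for double-edge pairs $(i,j,-), (i,j,+)$ with $i<j$, because any other opposite-sign incidence at a vertex $k$ comes from two distinct edges $(a,k,+), (b,k,-)$ with $a,b<k$ and $a\ne b$, which triggers reduction (\ref{graphs4}). If $p(G)$ counts the double pairs in $G$, then expanding each such pair via (\ref{graphs5}) in $\t$ and retaining only top-dimensional pieces decomposes $\mathcal{P}(G)$ into $2^{p(G)}$ alternating simplices in $\mathcal{L}_n$. By Lemma~\ref{2^k/d!}, each simplex has volume $2^{k(G_0)}/d!$, using that every $\c$-reduction (and the auxiliary (\ref{graphs5})-expansion) preserves the number $k$ of connected components containing a cycle. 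Hence $\vol_d(\mathcal{P}(G)) = 2^{p(G)+k(G_0)}/d!$, and summing over leaves gives
\[
\sum_{G\text{ leaf}} 2^{p(G)} \;=\; \frac{d!\,\vol_d(\mathcal{P}(G_0))}{2^{k(G_0)}} \;=\; f(G_0),
\]
which depends only on $G_0$, recovering the content of Proposition~\ref{2^k}.

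The remaining step, and the main obstacle, is to upgrade invariance of the weighted sum $\sum_G 2^{p(G)}$ to invariance of the plain count $\sum_G 1$, which is the quantity in the proposition. I would prove this by establishing local confluence of the leaf multiset $\{p(G)\}_G$: at every graph admitting two simultaneously applicable $\c$-reductions, I would verify by a finite case analysis across the five reduction types that applying them in either order and then completing both subtrees yields matching multisets of $p$-values. The bookkeeping amounts to tracking how each reduction creates, destroys, or leaves untouched the double edges in the current graph, a tedious but tractable check. An inductive argument on the reduction tree then propagates local confluence to global invariance of $\{p(G)\}_G$, whence the cardinality $\#\{G\text{ leaf}\}$ is determined by $G_0$ alone and the proposition follows.
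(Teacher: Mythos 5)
Your first two paragraphs carry out what the paper itself treats as immediate: the paper proves Proposition \ref{ce} in one line, as ``analogous'' to the corollary of Proposition \ref{2^k}, i.e.\ the evaluation at $1$ counts the leaves of a $\mathcal{B}^c$-reduction tree, and the Reduction Lemma turns those leaves into a subdivision of $\mathcal{P}(G_0)$ into top-dimensional cells, so the count should be read off from $\vol_d(\mathcal{P}(G_0))$. You have also correctly located the one place where the analogy with the $\t$-case is not literal: since $\c$ has no relation with left side $x_{ij}y_{ij}$, a leaf of a $\mathcal{B}^c$-reduction tree may contain double pairs $(i,j,+),(i,j,-)$ (for instance, reducing $x_{12}x_{23}y_{13}$ as in (\ref{ex1}) produces the irreducible monomial $x_{13}x_{12}y_{13}$, whose graph contains the double pair on $\{1,3\}$), its polytope is then not a simplex but a union of $2^{p(G)}$ simplices of total volume $2^{p(G)+k(G_0)}/d!$, and hence the volume identity pins down only the weighted count $\sum_G 2^{p(G)}$, not the number of leaves.

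The genuine gap is your third paragraph, which is exactly where the content of the proposition now sits, and which is a plan rather than a proof. Two concrete problems with the plan as stated: (a) when the two simultaneously applicable reductions share an edge, you cannot ``apply them in either order and then complete both subtrees''; resolving such overlaps is a confluence analysis of critical pairs, not a commutation, and that is the only nontrivial case. (b) Whether a reduction creates or destroys a double pair is not a local matter: e.g.\ relations (7) and (8) create the edge $(i,j,+)$, and whether this survives as an irreducible double pair in a leaf depends on an ambient copy of $(i,j,-)$ elsewhere in the graph and on which later reductions consume it, so a ``finite case analysis across the five reduction types'' does not control the leaf multiset $\{p(G)\}$; any honest version of your induction must carry the whole ambient graph, and you have not produced the invariant that would make it close. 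As written, then, your argument establishes the invariance of $\sum_G 2^{p(G)}$ (which is what the paper's volume method gives at once) but not the claimed invariance of $P_n^{\mathcal{B}^c}(x_{ij}=y_{ij}=z_i=1)$; to finish one needs either a proof that the number (or multiset) of double-pair leaves is determined by $G_0$, or a finer intrinsic quantity than volume (for example lattice-point or face counts of the induced subdivisions) that distinguishes one double-pair cell from the two simplices into which it splits.
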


    \section{Reductions in the noncommutative case}
\label{reductionsB}  

In this section we turn our attention to the noncommutative algebra $\b$. We consider  reduced forms of monomials  in $\b$ and the reduction rules correspond to the relations $(5)-(9')$ of $\b$:

\medskip

(5) $x_{ij}x_{jk}\rightarrow x_{ik}x_{ij}+x_{jk}x_{ik}$,  for  $1\leq i<j<k\leq n$,

\medskip

($5'$) $x_{jk}x_{ij}\rightarrow x_{ij}x_{ik}+x_{ik}x_{jk}$, for  $1\leq i<j<k\leq n$,

\medskip

(6) $x_{ij}y_{jk}\rightarrow  y_{ik}x_{ij}+y_{jk}y_{ik}$,  for  $1\leq i<j<k\leq n$,

\medskip

($6'$) $y_{jk}x_{ij} \rightarrow x_{ij}y_{ik}+y_{ik}y_{jk}$, for  $1\leq i<j<k\leq n$,

\medskip

(7) $x_{ik}y_{jk} \rightarrow y_{jk}y_{ij}+y_{ij}x_{ik}$, for  $1\leq i<j<k\leq n$,

\medskip

($7'$) $y_{jk}x_{ik} \rightarrow y_{ij}y_{jk}+x_{ik}y_{ij}$, for  $1\leq i<j<k\leq n$,

\medskip

(8) $y_{ik}x_{jk}\rightarrow x_{jk}y_{ij}+y_{ij}y_{ik}$, for  $1\leq i<j<k\leq n$,

\medskip

($8'$) $x_{jk}y_{ik} \rightarrow y_{ij}x_{jk}+y_{ik}y_{ij}$, for  $1\leq i<j<k\leq n$,

\medskip

(9) $x_{ij}z_j \rightarrow z_i x_{ij}+ y_{ij} z_i + z_j y_{ij}$, for $i<j$

\medskip

($9'$) $z_jx_{ij} \rightarrow x_{ij}z_i+  z_i y_{ij}+  y_{ij}z_j$, for $i<j$

\medskip

As observed in Proposition \ref{ce}, in the commutative counterpart of $\b$, $\c$,  the number of monomials in a reduced form of $w_{C_n}$ is the same, regardless of the order of the reductions performed. In this section we develop  the tools necessary for proving the uniqueness of the reduced form in $\b$ for $w_{C_n}$ and other monomials. The key concept is that of a ``good" graph, which property is preserved under the reductions.

As in the commutative case before, we can to phrase the reduction process in terms of graphs.   Let    $m=\prod_{l=1}^p \w(i_l, j_l,\e_l)$ be a monomial  in variables $x_{ij}, y_{ij}, z_k, 1 \leq i<j \leq n, k \in [n]$, where $\w(i, j, -)=x_{ij}$ for $i<j$, $\w(i, j, -)=x_{ji}$ for $i>j$, $\w(i, j,+)=y_{ij}$ and $\w(i, i,+)=z_{i}$. We can think of $m$ as a graph $G$ on the vertex set $[n]$ with $p$ edges labeled $1, \ldots, p$, such that the edge labeled $l$ is $(i_l, j_l, \e_l)$.  Let $G^\mathcal{B}[m]$ denote the  edge-labeled graph just described.
  Let $(i, j, \e)_a$  denote an   edge  $(i , j, \e )$ labeled $a$. Recall  that in our  edge notation $(i, j, \e)=(j, i, \e)$, i.e., vertex-label  $i$ might be smaller or greater than $j$. We can reverse the process and   obtain a monomial from an edge labeled graph $G$. 
  Namely, if $G$ is   edge-labeled with labels $1, \ldots, p$, we can also associate to it the noncommutative monomial $m^\mathcal{B}[G]=\prod_{a=1}^p \w(i_a, j_a,\e_a)$, where $E(G)=\{ (i_a, j_a, \e_a)_a \mid a \in [p]\}$.

In terms of graphs the partial  commutativity of $\b$, as described by relations (2)-(4),    means that if $G$ contains two edges $(i, j, \e_1)_a$ and $(k, l, \e_2)_{a+1}$ with $i, j, k, l$ distinct, then we can replace these edges by $(i, j, \e_1)_{a+1}$ and $(k, l, \e_2)_{a}$, and vice versa. For illustrative purposes we write out the graph reduction for relation (5) of $\b$. If there are two edges $(i, j, -)_a$ and $(j, k, -)_{a+1}$ in $G_0$, $i<j<k$,  then we replace $G_0$ with two graphs $G_1, G_2$ on the vertex set $[n]$ and edge sets 

\begin{eqnarray} \label{graphs}
E(G_1)&=&E(G_0)\backslash \{(i, j, -)_a\}\backslash \{(j, k, -)_{a+1}\}\cup \{(i, k, -)_a\}\cup \{(i, j, -)_{a+1}\} \nonumber \\
E(G_2)&=&E(G_0)\backslash \{(i, j, -)_a\}\backslash \{(j, k, -)_{a+1}\}\cup \{(j, k, -)_a\}\cup \{(i, k, -)_{a+1}\}  \nonumber  
\end{eqnarray}

 Relations $(5')-(9')$ of $\b$  can be translated into graph language analogously. We say that $G_0$ reduces to $G_1$ and $G_2$ under reductions $(5)-(9')$.

While in the commutative case   reductions on $G^\mathcal{S}[m]$ could result in crossing graphs, we prove that in $\b$ all   reductions   preserve the  noncrossing nature of graphs, provided that we started with a    suitable noncrossing graph $G$. A graph $G$ is {\bf noncrossing} if    there are no vertices $i <j<k<l$ such that $(i, k, \e_1)$ and $(j, l, \e_2)$ are edges of $G$. We also show that under reasonable  circumstances,   if  in $\c$ a reduction could be applied to edges  $e_1$ and $e_2$, then   after suitably many allowed commutations in $\b$  it is possible to perform  a reduction on $e_1$ and $e_2$ in $\b$.  

We now define two central notions of the noncommutative case, that of a well-structured graph and that of a well-labeled graph. 
\medskip

A graph  $H$ on the vertex set $[n]$  is \textbf{well-structured} if it satisfies the following conditions:

$(i)$ $H$ is noncrossing. 

$(ii)$ For any two edges $(i, j, +), (k, l, +) \in H$, $i<j, k<l$, it must be that $i<l$ and $k<j$.

$(iii)$ For any two edges $(i, i, +), (k, l, +) \in H$, $k<l$, it must be that $k\leq i \leq l$ .


$(iv)$ There are no edges $(i, i, +), (k, j, -) \in H$ with $k<i<j$.

$(v)$ There are no edges $ (i, j, +), (k, l, -) \in H$ with $k\leq i<j \leq l$.



$(vi)$ Graph $H$ is connected, contains exactly one loop, and contains no nonloop cycles.

\medskip

Condition $(vi)$ implies that any well-structured graph on the vertex set $[n]$ contains $n$ edges. 

\medskip

A graph  $H$ on the vertex set $[n]$ and $p$ edges labeled   $1, \ldots, p$  is \textbf{well-labeled} if it satisfies the following conditions:

 $(i)$ If edges $(i, j, \e_1)_a$ and $(j, k, \e_2)_b$ are in $H$, $i<j<k$, $\e_1, \e_2 \in \{-, +\}$,  then $a<b$.

$(ii)$ If edges $(i, j, \e_1)_a$ and $(i, k, \e_2)_b$ in $H$ are such that $i<j < k$, $\e_1, \e_2 \in \{-, +\}$,  then $a>b$.

$(iii)$  If edges $(i, j, \e_1)_a$ and $(k, j, \e_2)_b$ in $H$ are such that $i < k<j$, $\e_1, \e_2 \in \{-, +\}$,  then $a>b$.

$(iv)$  If edges $(i, i, +)_a$ and $(i, j, -)_b$ in $H$ are such that $i <j$,  then $a<b$.

$(v)$  If edges $(j, j, +)_a$ and $(i, j, -)_b$ in $H$ are such that $i <j$,  then $a>b$.

$(vi)$  If edges $(i, i, +)_a$ and $(i, j, +)_b$ in $H$ are such that $i <j$,  then $a>b$.

$(vii)$  If edges $(j, j, +)_a$ and $(i, j, +)_b$ in $H$ are such that $i <j$,  then $a<b$.

\medskip

Note that no graph $H$ with a nonloop cycle can be well-labeled. However,  every well-structured graph   can be well-labeled. We call graphs that are both well-structured and  well-labeled  {\bf good} graphs.

A \textbf{$\mathcal{B}$-reduction tree}  $\mathcal{T}^\mathcal{B}$ is defined analogously to an $\mathcal{S}$-reduction tree, except we use the noncommutative reductions to describe the children. See Figure \ref{fig:eps} for an example.
 A graph $H $ is called a \textbf{$\mathcal{B}$-successor} of $G$ if it is obtained by a series of reductions from $G$.    

    \begin{figure}[htbp] 
\begin{center} 
\includegraphics[width=1.2\textwidth]{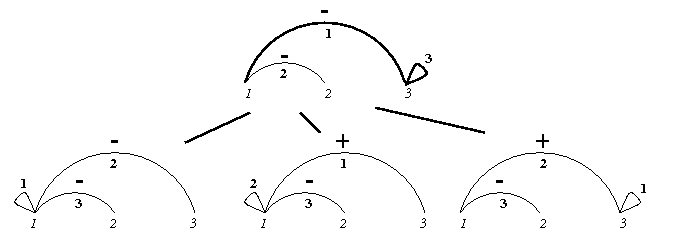} 
\caption{A $\mathcal{B}$-reduction tree with root corresponding to the monomial $x_{13}x_{12}z_3$. Note that in order to perform a reduction on this monomial we commute variables $x_{13}$ and $x_{12}$. In the $\mathcal{B}$-reduction tree we only record the reductions, not the commutations.  Summing the monomials corresponding to  the graphs labeling the  leaves  of the reduction tree    we obtain a reduced form $P^\mathcal{B}_n$ of  $x_{13}x_{12}z_3$,  $P^\mathcal{B}_n=z_1x_{13}x_{12}+y_{13}z_1x_{12}+  z_3y_{13}x_{12}$. } 
\label{fig:eps}
\end{center} 
\end{figure} 

\begin{lemma} \label{huh}
If the root of a $\mathcal{B}$-reduction  tree is labeled by   a good graph, then all nodes of it  are  also labeled by  good graphs.  
\end{lemma}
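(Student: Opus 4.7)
The plan is induction on depth in $\mathcal{T}^\mathcal{B}$. The root is good by hypothesis, so it suffices to show that if $G_0$ is good and one of the reductions (5)--(9') is applied (possibly after commutations allowed by (2)--(4)), then both resulting children $G_1, G_2$ are good. Commutations preserve goodness: they swap labels $a$ and $a{+}1$ of two edges with disjoint vertex sets, so the well-structured conditions (which depend only on the unlabeled edge set) are unaffected, while every well-labeled condition concerns a pair of edges sharing a vertex and hence cannot involve the swapped pair; moreover, for any third edge labeled $b \ne a, a{+}1$, the comparisons $b < a$ and $b < a{+}1$, and likewise $b > a$ and $b > a{+}1$, are equivalent, so every inequality involving one of the swapped edges and a third edge remains true. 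Thus I focus on a single reduction.

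For each of the ten reductions I verify that both children satisfy the well-structured conditions (i)--(vi) and the well-labeled conditions (i)--(vii). Condition (vi) (connectedness, single loop, no nonloop cycles) is preserved because each reduction exchanges two adjacent edges at a common vertex for two new edges on the same three vertices (two vertices when a loop is involved), without altering any other edge or the tree-plus-loop structure. For the remaining well-structured conditions and for well-labeledness, the new edge in each child uses vertices among $i<j<k$, so any hypothetical forbidden configuration must involve a third edge $e'$ of $G_0$. A short case analysis on the positions of the endpoints of $e'$ relative to $i,j,k$ shows that either $e'$ already creates a forbidden configuration with one of the two reduced edges in $G_0$ (contradicting well-structuredness of $G_0$), or else $e'$ shares an endpoint with one of the reduced edges, in which case the well-labeled conditions (i)--(iii) of $G_0$ applied to $e'$ together with each of the two reduced edges force two contradictory inequalities on the label of $e'$.

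I would write out reduction (5) in detail as a template --- edges $(i,j,-)_a, (j,k,-)_{a+1}$ becoming $(i,k,-)_a, (i,j,-)_{a+1}$ in $G_1$ and $(j,k,-)_a, (i,k,-)_{a+1}$ in $G_2$ --- verifying in particular that no edge $(a',b')$ of $G_0$ with $a' < i < b' < k$ or $i < a' < k < b'$ (which would cross the new edge $(i,k,-)$) can exist: when $j \notin \{a', b'\}$, noncrossingness of $G_0$ is directly violated by $(a',b')$ together with $(i,j,-)$ or $(j,k,-)$; when $a' = j$ or $b' = j$, the well-labeled conditions at $j$ combined with the consecutive labels $a, a{+}1$ of the reduced edges produce contradictory inequalities on the label of $(a',b')$. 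The remaining nine reductions are entirely analogous, with signs (negative/positive edges, loop incidences) adjusted and well-labeled conditions (iv)--(vii) substituted wherever loops are involved. The main obstacle is therefore not any single difficult step but the bookkeeping: ten reductions, two children each, and up to six positional and seven label conditions to check per child against every other edge --- yet the arguments are uniform and mechanical once the template of reduction (5) is set.
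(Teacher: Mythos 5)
Your proposal takes essentially the same route as the paper: the paper's proof is precisely this local case analysis of how each noncommutative reduction (after label-swapping commutations, which indeed preserve goodness) changes the graph, with the details deferred to the analogous type $A_n$ lemma, and your template for reduction (5) --- using the consecutive labels $a, a+1$ of the reduced edges to force contradictory label inequalities on any interfering third edge --- is exactly the right mechanism, which also handles the loop cases via conditions (iv)--(vii) as you note. One small bookkeeping correction: reductions (9) and ($9'$) produce three children rather than two, so for those rules your check must cover three graphs, though your argument applies to the extra child unchanged.
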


The proof of Lemma \ref{huh} is an analysis of the local changes that happen during the noncommutative reduction process. An analogous lemma for  type $A_n$ is proved in \cite[Lemma 12]{kar}.

 A reduction applied to a noncrossing graph $G$  is \textbf{noncrossing} if the graphs resulting from the reduction are also noncrossing.

The following is then an immediate corollary of Lemma \ref{huh}.

\begin{corollary}
If $G$ is a  good graph, then all reductions that can be applied to $G$ and its $\mathcal{B}$-successors   are  noncrossing.
\end{corollary}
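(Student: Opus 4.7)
The plan is to derive this corollary as an essentially immediate consequence of Lemma \ref{huh}, by unwinding definitions. The key observation is that condition $(i)$ of the definition of well-structured requires the graph to be noncrossing; hence every good graph is noncrossing, and once we know good-ness is preserved along the reduction tree, noncrossingness is preserved as well.

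More concretely, suppose $G$ is a good graph and let $G'$ be any $\mathcal{B}$-successor of $G$ (including $G$ itself). Suppose a reduction chosen from $(5)$--$(9')$ can be applied to $G'$, producing two graphs $G_1, G_2$. Since $G_1$ and $G_2$ are themselves $\mathcal{B}$-successors of $G$ (obtained from $G$ by one further reduction than was used to reach $G'$), Lemma \ref{huh} gives that $G_1$ and $G_2$ are good. In particular, by condition $(i)$ of the well-structured definition, both $G_1$ and $G_2$ are noncrossing. This is precisely what it means for the reduction applied to $G'$ to be noncrossing.

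The only point requiring a line of care is the bookkeeping of commutations: a reduction in $\mathcal{B}(C_n)$ is applied to two adjacent variables, so producing the required adjacency may involve commutations $(2)$--$(4)$ beforehand. In terms of graphs these commutations relabel edge positions but do not alter the underlying (unlabeled) graph, and they trivially preserve noncrossingness of the graph and well-structuredness. Hence they do not interfere with the argument: whichever $\mathcal{B}$-successor $G'$ we land at, any legitimate reduction yields children that Lemma \ref{huh} certifies as good, and therefore noncrossing. The entire content of the corollary is thus the implication ``good $\Rightarrow$ noncrossing'' composed with closure of goodness under reductions; there is no independent obstacle beyond Lemma \ref{huh} itself.
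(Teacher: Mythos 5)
Your proposal is correct and follows exactly the paper's route: the paper derives this corollary immediately from Lemma \ref{huh}, since goodness is preserved along the $\mathcal{B}$-reduction tree and condition $(i)$ of well-structuredness already forces noncrossingness. Your extra remark on commutations is fine but not needed beyond what the paper implicitly assumes.
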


Let $e_1=(i_1, j_1, \e_1)_{a_1}, e_2=(i_2, j_2, \e_2)_{a_2}, e_3=(i_3, j_3, \e_3)_{a_3}$ be edges of the graph  $H$ such that 
 in the commutative algebra $\c$ a reduction could be performed on $e_1$ and $e_2$ as well as on $e_1$ and $e_3$. Suppose  that $a_1<a_2<a_3$. Then we say,  in the noncommutative case $\b$, that performing reduction on edges $e_1$ and $e_2$  is a {\bf priority} over performing reduction 
on edges $e_1$ and $e_3$.  We give a few concrete examples of this priority below. 

 \begin{example}  Performing reduction (6) on edges $(i, j, -), (j, k, +) \in H$, $i<j<k$,  is a {\bf priority} over performing reduction (9) on edges $(i, j, -), (j, j, +) \in H$. Performing reduction (9) on edges $(i, j, -), (j, j, +) \in H$,   is a {\bf priority} over performing reduction (5) on edges $(i, j, -), (j, k, -) \in H$, $i<j<k$.   Performing reduction (9) on edges $(i, j, -), (j, j, +) \in H$,  is a {\bf priority} over performing reduction (9) on edges $(k, j, -), (j, j, +) \in H$, $i<k<j$.  Performing reduction (9) on edges $(i, j, -), (j, j, +) \in H$ is a {\bf priority} over performing reduction (8) on edges $(i, j, -), (k, j, +) \in H$, $k<i<j$. 
\end{example}

\begin{lemma} \label{cross}
Let $G$ be a   good graph.  Let $e_1$ and $e_2$ be edges of $G$ such that one of the reductions $(5)-(9')$ could be applied to them in the commutative case, and such that the reduction would be noncrossing. 
 Then after finitely many applications of allowed commutations in $\b$  we can perform a   reduction on  edges  $e_1$ and $e_2$, provided there is no edge $e_3$ in the graph  such that reducing $e_1$ and $e_3$ or $e_2$ and $e_3$  is a priority over reducing $e_1$ and $e_2$.

\end{lemma}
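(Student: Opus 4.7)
The plan is to reduce the lemma to a \emph{non-sharing} claim about edges between $e_1$ and $e_2$ in the linear order, and then carry out the commutations greedily. Throughout, write $e_1 = (i_1, j_1, \e_1)_{a_1}$ and $e_2 = (i_2, j_2, \e_2)_{a_2}$; by the symmetry of the hypothesis we may assume $a_1 < a_2$.

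First I would unpack the priority hypothesis. Examining the examples of priority given in the paper shows that priority always goes to the reduction whose non-common label is closer (in the linear order) to the common one. Hence the hypothesis, restricted to edges $e_3$ with $a_1 < a_3 < a_2$, says precisely: \emph{no such $e_3$ admits a $\b$-reduction with $e_1$ or with $e_2$}. Any such reduction would have its non-common label strictly between $a_1$ and $a_2$, and thus strictly closer to the common label than the other non-common label, making it a priority.

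Next I would prove the \textbf{non-sharing claim}: every edge $e_3$ with $a_1 < a_3 < a_2$ shares a vertex with at most one of $e_1, e_2$. Suppose for contradiction that $e_3$ meets both. There are two structural possibilities. If the two endpoints of $e_3$ are the two non-common endpoints of $e_1$ and $e_2$, then $e_1, e_2, e_3$ form a nonloop 3-cycle, contradicting well-structured condition $(vi)$. Otherwise $e_3$ shares the common vertex $v$ of $e_1$ and $e_2$ and has some other endpoint $m$. Here I would tabulate the ten possible reduction rules applied to $(e_1,e_2)$ (one of (5)--(9$'$)), and for each rule enumerate the positions of $m$ (smaller than, between, or larger than the indices of $e_1, e_2$) and the sign of $e_3$. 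The well-labeling conditions $(i)$--$(vii)$ severely restrict which of those configurations are compatible with $a_1 < a_3 < a_2$; in each surviving configuration I match the pair $(e_1, e_3)$ or $(e_2, e_3)$ against the list of left-hand sides of (5)--(9$'$) and exhibit the applicable $\b$-reduction, contradicting the unpacked hypothesis. A similar 4-cycle argument (degenerating to a 3-cycle when $e_2$ is the loop of reduction (9)) shows moreover that an edge disjoint from $e_1$ but not from $e_2$ (an ``$L$-edge'') and an edge disjoint from $e_2$ but not from $e_1$ (an ``$R$-edge'') that both lie strictly between $e_1$ and $e_2$ cannot share a vertex either.

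Finally I would carry out the commutations. By the non-sharing claim, every intermediate edge lies in $L \cup R$, and I classify intermediates as $L$-only, $R$-only, or $B$ (disjoint from both $e_1$ and $e_2$). The algorithm is greedy: repeatedly find the $L$-only (resp.\ $B$) intermediate closest to $e_1$ and slide it leftward past $e_1$; it is vertex-disjoint from every $R$-only and $B$ intermediate lying between it and $e_1$ (by the companion claim above and by the definition of $B$), so each intermediate swap is legal, and the final swap past $e_1$ is legal by definition of $L$. Symmetrically, slide the $R$-only intermediates rightward past $e_2$. Since allowed commutations only swap vertex-disjoint pairs --- which fall outside the premise of every well-labeling clause --- good-graph-ness is preserved throughout, so the procedure makes sense at every step. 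After finitely many commutations $e_1$ and $e_2$ are adjacent, and the desired reduction can be performed. The hard part will be Step~2: the case analysis runs through ten reductions crossed with several possible signs and placements of $e_3$'s free endpoint, and in each sub-case one must verify both that the well-labeling inequalities force the label into the interval $(a_1,a_2)$ and that an explicit $\b$-reduction on $(e_1,e_3)$ or $(e_2,e_3)$ is available; once this enumeration is assembled, the remaining steps are essentially bookkeeping.
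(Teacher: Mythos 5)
The paper never writes this proof out (it just says the argument ``proceeds by inspection,'' deferring to the type $A_n$ analogue in part~I), so your proposal has to stand on its own --- and as it stands it does not establish the statement. The true obstruction to making $e_1$ and $e_2$ adjacent by the allowed commutations is not only a single intermediate edge meeting both of them, nor only an $L$-edge meeting an $R$-edge: it is \emph{any} chain $e_1, f_1, f_2, \ldots, f_t, e_2$ in which the $f_s$ have labels increasing and strictly between $a_1$ and $a_2$ and consecutive members share a vertex. If such a chain exists, every word reachable by commutations keeps $f_1, \ldots, f_t$ strictly between $e_1$ and $e_2$ (vertex-sharing pairs never change relative order), so no sequence of commutations can ever make $e_1$ and $e_2$ adjacent. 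Your non-sharing claim excludes only chains with $t=1$, and your companion claim excludes only the $t=2$ pattern consisting of one $R$-edge and one $L$-edge; chains that pass through $B$-edges, e.g.\ $e_1\hbox{--}r\hbox{--}b\hbox{--}\ell\hbox{--}e_2$, are untouched, since nothing you prove prevents a $B$-edge from sharing a vertex with an $L$- or $R$-edge. Ruling these out requires a further analysis of the well-structured/well-labeled conditions together with the no-priority hypothesis, and it is exactly the missing content of the lemma.

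The same omission invalidates the greedy step: when you slide the $L\cup B$ intermediate closest to $e_1$ leftward, the justification that it is ``vertex-disjoint from every $R$-only and $B$ intermediate lying between it and $e_1$ \ldots by the definition of $B$'' is wrong --- being disjoint from $e_1$ and $e_2$ says nothing about being disjoint from the $R$-edges it must pass, so the swap you declare legal need not be (in some such configurations a different order of moves works, but your algorithm as described attempts an illegal swap or stalls). To close the argument you must prove, by the kind of case analysis you sketch in Step~2 but applied to intermediates sharing a vertex with each other and not only with $e_1,e_2$, that no blocking chain of any length exists, and then justify each individual swap of the commutation procedure from that statement rather than from membership in $L$, $R$, $B$. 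A smaller point worth flagging explicitly: your unpacking of ``priority'' uses the reading suggested by the paper's examples (the partner whose label is closer to the shared edge's label wins), not the literal definition, which only covers the case where the shared edge carries the smallest of the three labels; that reading is surely the intended one, but since the whole hypothesis of the lemma is filtered through it, you should say so.
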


The proof of Lemma \ref{cross} proceeds by inspection. An analogous lemma for  type $A_n$ is proved in \cite[Lemma 14]{kar}.
  
\section{The Proof  of Kirillov's Conjecture}
\label{reductionsB1}

In this section we prove Conjecture 1, construct a triangulation of $\mathcal{P}(C_n^+)$ and compute its volume.  In order to do this we 
study alternating well-structured graphs. Recall that  
an alternating well-structured graph $T^l$ is the union of a 
  noncrossing  alternating tree $T$ on the vertex set $[n]$  and a loop, that is,   $T^l=([n], E(T) \cup \{(k, k, +)\}),$ for some $k \in [n]$ for which  $T^l$ is alternating. A  well-labeling that  will play a special role in this section is the lexicographic labeling, defined below. 
  
  The \textbf{lexicographic order} on the edges of a  graph $G$ with $m$ edges is as follows. Edge $(i_1, j_1, \e)$ is less than edge $(i_2, j_2, \e)$, $\e \in \{+, -\}$, in the  lexicographic  order  if  $j_1>j_2$, or $j_1=j_2$ and $i_1>i_2$. Furthermore, any positive edges is less than any  negative edges in the lexicographic ordering. Graph  $G$ is said to have \textbf{lexicographic  edge-labels} if its edges are labeled by integers $1, \ldots, m$ such that  if   edge $(i_1, j_1, \e_1)$ is less than edge $(i_2, j_2, \e_2)$ in lexicographic  order, then the label of $(i_1, j_1, \e_1)$ is less than the label of   $(i_2, j_2, \e_2)$ in the usual order on the integers. Given any graph $G$ there is a unique edge-labeling of it which is lexicographic. Note that our definition of lexicographic is closely related to the conventional definition, but it is not the same. For an example of lexicographic  edge-labels, see the graphs labeling the leaves of the $\mathcal{B}$-reduction tree in Figure \ref{fig:eps}.

\begin{lemma} \label{labeling}
If  $T^l$ is an alternating good graph, then upon some number of commutations performed on $T^l$,  it is possible to obtain  ${T}^l_1$ with lexicographic  edge-labels.
\end{lemma}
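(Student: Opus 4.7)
The plan is to prove Lemma \ref{labeling} in two independent steps. First, show that the lexicographic edge-labeling of any alternating well-structured graph is itself a well-labeling. Second, show that any two well-labelings of the same underlying well-structured graph are joined by a sequence of allowed commutations. Combining the two, one commutes from the given good labeling of $T^l$ to the lexicographic one, producing the desired $T^l_1$.

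For the first step, I would check each of the seven conditions (i)--(vii) in the definition of well-labeled against the lex order, using that $T^l$ is alternating (so all edges at any fixed vertex $v$ have the same type of incidence at $v$) and well-structured. Several a priori problematic configurations are in fact excluded by the hypotheses: two positive non-loop edges $(i,j,+), (j,k,+)$ meeting at $j$ are forbidden by well-structured axiom (ii); two negative edges $(i,j,-), (j,k,-)$ fail alternation at $j$; a loop $(j,j,+)$ coexisting with $(i,j,-)$ fails alternation at $j$, which removes condition (v) entirely; and a pair $(i,j,+), (i,k,-)$ with $i<j<k$ violates well-structured (v). In every remaining configuration the inequality between the labels $a$ and $b$ required by the well-labeled axiom turns out to be the one imposed by the lex order (positives before negatives; within one sign class, larger max-vertex first, ties broken by larger min-vertex). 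A patient case-by-case verification then shows that the lex labeling satisfies every well-labeled axiom simultaneously.

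For the second step, note that the commutations described in Section \ref{reductionsB} are precisely the adjacent transpositions of labels $a$ and $a+1$ whose two edges are vertex-disjoint (with loops treated in the obvious way). Conversely, the axioms (i)--(vii) only constrain pairs of edges that share a vertex, so the set of well-labelings of a fixed well-structured graph is exactly the set of linear extensions of the partial order on its edges generated by those axioms. Since any two linear extensions of a finite poset are connected by adjacent transpositions of incomparable elements -- and those are exactly the allowed commutations -- the second step follows. A constructive form of the argument is a bubble sort: let $e$ be the lex-minimum edge and suppose it has label $a>1$ in the current well-labeling; the edge $f$ carrying label $a-1$ cannot share a vertex with $e$, for otherwise the well-labeled partial order would force the same relative order on $\{e,f\}$ as the lex order (by the first step), giving $\ell(f)>\ell(e)$ and contradicting $\ell(f)=a-1<a$. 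Hence one may commute $e$ past $f$; iterating brings $e$ to label $1$. One then repeats the argument with the lex-second edge among the remaining ones, and so on.

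The main obstacle is the case analysis in the first step, namely confirming that every pair of well-labeled edges sharing a vertex has its lex order in agreement with the inequality the axiom forces. The delicate cases are (i), (v), and (vii), where both alternation and the well-structured axioms are essential: without them, configurations such as $(i,j,+),(j,k,+)$ or $(j,j,+),(i,j,-)$ would give genuine disagreement between lex and the well-labeled requirement. Once the bookkeeping shows all such conflicts are excluded, the second step is a routine application of the connectedness of the poset of linear extensions under swaps of incomparable adjacent elements.
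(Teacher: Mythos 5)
Your proposal is correct and follows essentially the same route as the paper's proof: the key facts in both are that the well-labeling axioms force the lexicographic relative order on every pair of edges sharing a vertex in an alternating well-structured graph, and that commutations only swap vertex-disjoint consecutively labeled edges and hence preserve this, after which one sorts to the lexicographic labeling. Your version simply makes explicit the case analysis and the bubble-sort/linear-extension argument that the paper leaves implicit.
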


\begin{proof}
If edges  $e_1$ and $e_2$ of $T^l$   share a vertex and if $e_1$ is less than $e_2$ in the lexicographic order, then the label of $e_1$ is less than the label of $e_2$ in the usual order on integers by the definition of well-labeling on alternating well-structured graphs. Since commutation  swaps the labels of two vertex disjoint edges labeled by consecutive integers in  a graph, these swaps do not affect the relative order of the labels on edges sharing vertices.  Continue these swaps until the lexicographic order is obtained. 
\end{proof}

\begin{proposition} \label{non}
By choosing the series of reductions suitably, the set of leaves of a $\mathcal{B}$-reduction tree with root labeled by  $G^\mathcal{B}[w_{C_n}]$ can be all  alternating well-structured graphs $T^l$ on the vertex set  $[n]$     with lexicographic  edge-labels.  The number of such graphs is ${{2n-1}\choose n}$.
\end{proposition}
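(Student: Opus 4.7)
The plan is to proceed in three stages: verify that the root of the reduction tree is a good graph, describe the leaves of a carefully chosen $\mathcal{B}$-reduction tree, and count the set of alternating well-structured graphs on $[n]$.

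First I would check by direct inspection that $G^\mathcal{B}[w_{C_n}]$---the graph on $[n]$ whose edges are the negative path edges $(i, i+1, -)$ labeled $i$ for $i = 1, \ldots, n-1$, together with the loop $(n, n, +)$ labeled $n$---is a good graph. The absence of non-loop positive edges and the fact that the loop sits at the largest vertex render most of the well-structured conditions $(ii)$--$(v)$ vacuous, while $(i)$ and $(vi)$ are clear; among the well-labeling conditions only $(i)$ (for consecutive path edges) and $(v)$ (for the loop at $n$ versus the edge $(n-1, n, -)$) are non-vacuous, and both are immediate from the chosen labeling.

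Next, Lemma \ref{huh} shows that every node of a $\mathcal{B}$-reduction tree rooted at a good graph is itself good, while the reduction rules $(5)$--$(9')$ apply at a vertex $v$ exactly when $v$ is incident to two edges of opposite signs. Hence every leaf is an alternating good graph, which by Lemma \ref{labeling} can be brought into lexicographic edge-labeling via commutations. To ensure that every alternating well-structured $T^l$ on $[n]$ with lexicographic labels appears as a leaf exactly once, I would build the tree by a priority-driven sweep: at each stage take the smallest non-alternating vertex $v$ of the current node and invoke Lemma \ref{cross} to perform the unique reduction of highest priority among the edges incident to $v$. The two resulting children should correspond to a bipartition of the alternating well-structured completions of the current node (those in which the incidence at $v$ introduced by the reduction is one sign versus the other), so induction on the number of non-alternating vertices then gives exactly one leaf for each target graph.

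For the final count, I would look for a bijection between alternating well-structured $T^l$ on $[n]$ and binary words of length $2n-1$ with exactly $n$ symbols of one type, either directly---by walking a canonical contour through $1, 2, \ldots, n$ and recording the signs of the incidences at each vertex in the order given by the well-labeling, suppressing one endpoint of the loop---or inductively by deleting the unique edge containing vertex $1$ and splitting into cases according to whether $1$ is a leaf of the tree or not, giving the Pascal-type recurrence $\binom{2n-1}{n} = \binom{2n-2}{n-1} + \binom{2n-2}{n}$. The small cases $n = 1, 2, 3$ already give $1, 3, 10$ graphs, matching $\binom{1}{1}, \binom{3}{2}, \binom{5}{3}$.

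The main obstacle will be the bipartition verification in the second stage. Although Lemma \ref{cross} supplies the syntactic feasibility of each priority-driven reduction, one must still check that it produces a genuine two-way split---no overlap and no omission---of the alternating well-structured completions of the current node. This requires a careful local analysis at the active vertex across all six reduction families $(\ref{graphs1})$--$(\ref{graphs6})$, tracking how each graph-level reduction interacts with the various well-structured and well-labeled conditions. By comparison, the counting step is relatively routine once the right canonical encoding is chosen.
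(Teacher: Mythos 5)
Your stage 2 is where the proposition actually lives, and it is exactly the part you leave unproven. The claim that the children of your priority-driven reduction at the smallest non-alternating vertex give a genuine bipartition (no overlap, no omission) of the ``alternating well-structured completions'' of the current node is never defined precisely, let alone verified, and you yourself flag it as the main obstacle; as written it is a restatement of the proposition rather than a proof of it. It is also not clear that your induction measure works: a reduction resolves one sign conflict at the middle vertex but need not strictly decrease the number of non-alternating vertices, and the reductions coming from relations (9), ($9'$) produce three children, not two, so the ``two-way split'' framing is already off. The paper does not need any such bipartition claim. It gets the ``no graph appears more than once'' half essentially for free from the Reduction Lemma: distinct leaves correspond to simplices with disjoint interiors in a subdivision of $\mathcal{P}(G^\mathcal{B}[w_{C_n}])$, so the same unlabeled graph cannot label two leaves. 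For the ``every target appears'' half it chooses a concrete two-phase order of reductions: first perform all reductions not touching the loop $(n,n,+)$, quoting the type $A$ result \cite[Theorem 18]{kar} to conclude that this produces exactly the noncrossing alternating trees containing $(1,n,-)$ with lexicographic labels; then observe that all remaining reductions involve only edges at vertex $n$, so it suffices to analyze by inspection the star-with-loop graph $([k+1],\{(k+1,k+1,+),(i,k+1,-)\mid i\in[k]\})$, whose reduction tree has $2^{k+1}-1$ leaves of an explicitly described form. Your Lemma \ref{huh}/\ref{labeling} observations match the paper's, but they only give that every leaf is an alternating good graph with lexicographic labels, not that every such graph occurs, and not that it occurs once.

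The counting stage is also shakier than you suggest, though less central. The enumeration in the paper is not a one-line Pascal recurrence: it is the identity $\sum_{k=1}^{n-1} T(n,k)\,(2^{k+1}-1)=\binom{2n-1}{n}$ with $T(n,k)=\binom{2n-k-3}{n-k-1}\frac{k}{n-1}$ the number of noncrossing alternating trees with $k$ edges at vertex $n$, obtained via a bijection with ordered trees and Dyck-path enumeration. Your sketch has concrete problems: vertex $1$ need not lie on a unique edge (e.g.\ the graph with loop at $1$ and edges $(1,2,+),(1,3,+)$ is alternating and well-structured), so ``delete the unique edge containing vertex $1$'' is not well defined; and in the contour-word idea the number of positive incidences is not constant over the family (compare the loop at $1$ with $(1,2,+)$ versus the loop at $1$ with $(1,2,-)$), so the words you record do not obviously have exactly $n$ symbols of one type. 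Your small-case counts $1,3,10$ are correct, but a valid bijection or recurrence would still need to be built.
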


\proof 
By the correspondence between the leaves of a $\mathcal{B}$-reduction tree    and simplices in a subdivision of  $\mathcal{P}(G^\mathcal{B}[w_{C_n}])$ obtained from the Reduction Lemma (Lemma \ref{reduction_lemma}), it follows that no graph with edge labels disregarded appears more than once among the leaves of a $\mathcal{B}$-reduction tree. Thus,   it suffices to prove that any  alternating well-structured graph $T^l$ on the vertex set  $[n]$ appears among the leaves of a $\mathcal{B}$-reduction tree and that all these graphs have lexicographic  edge-labels. 

First perform all possible reductions on the graph and its successors not involving the loop $(n, n, +)$. According to \cite[Theorem 18]{kar} the outcome is all noncrossing alternating spanning trees with lexicographic ordering on the vertex set $[n]$  and edge $(1, n, -)$ present. Let $T_1, \ldots T_w$ be the trees just described and $T_i^l=([n], E(T_i) \cup \{(n, n, +)\})$, $i \in [w]$. It is clear from the definition of reductions that the only edges involved in further reducing $T_i^l$, $i \in [w]$ are the ones incident to vertex $n$. Thus, in order to understand what the leaves of a reduction tree with root labeled $T^l_i$, $i \in [w]$, are, it suffices to understand the leaves of a reduction tree with root labeled $G=([k+1], \{(k+1, k+1, +), (i, k+1, -) \mid i \in [k])$, $k \in \{1, 2, \ldots, n-1\}$. It follows by inspection that the  leaves of a reduction tree with root labeled $G$ are of the form $([k+1], E(G_1) \cup E(G_2))$, where $G_1$ is a connected well-structured graph with only positive edges (having exactly one  loop) on $[l]$, $l \in [k+1]$, of which there are $2^{l-1}$ and $G_2=([k+1], \{ (i, k+1) \mid i \in \{l, l+1, \ldots, k\})$. It follows that all   alternating well-structured graphs $T^l$ are among the leaves of the particular $\mathcal{B}$-reduction tree described. Since all these graphs are well-labeled, having started with a good graph, by Lemma \ref{labeling} we can assume they have lexicographic  edge-labels.

From the description of the reductions above it is clear that the number of leaves of this particular reduction tree is $$\sum_{k=1}^{n-1} T(n, k) \cdot (2^{k+1}-1),$$ where   
 $$T(n, k)={{2n-k-3} \choose {n-k-1}} \frac{k}{n-1}$$ is the number of noncrossing alternating  trees on the vertex set $[n]$  with exactly $k$  edges incident to $n$, and  $2^{k+1}-1$ is the number  of leaves of the  reduction tree with root labeled $G([k+1], \{(k+1, k+1, +), (i, k+1, -) \mid i \in [k])$ as above. The formula for $T(n, k)$ follows by   a simple bijection between noncrossing alternating  trees on the vertex set $[n]$  with exactly $k$  edges incident to $n$ and ordered trees on the vertex set $[n]$ with the root having degree $k$. By  equations (6.21), (6.22), (6.28) and the bijection presented in Appendix E.1. in \cite{dyck}, ordered trees on the vertex set $[n]$ with the root having degree $k$ are enumerated by $T(n, k)$.   
 Since $\sum_{k=1}^{n-1} T(n, k) \cdot (2^{k+1}-1)={{2n-1}\choose n},$ the proof is complete.

\qed

       \begin{theorem} \label{ajaj}
The set of leaves of a $\mathcal{B}$-reduction tree  with root labeled by  $G^\mathcal{B}[w_{C_n}]$ is, up to commutations,  the set of all   alternating well-structured graphs    on the vertex set $[n]$  with  lexicographic  edge-labels.
\end{theorem}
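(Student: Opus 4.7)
The plan is to leverage Proposition \ref{non}, which already exhibits one particular $\mathcal{B}$-reduction tree whose leaves are exactly the $\binom{2n-1}{n}$ alternating well-structured graphs on $[n]$ with lexicographic edge-labels. The theorem strengthens this by asserting that \emph{every} $\mathcal{B}$-reduction tree with root $G^\mathcal{B}[w_{C_n}]$ yields, up to commutations, the same set. I would therefore separately pin down the two directions: (a) every leaf must be such a graph, and (b) the total number of leaves is an invariant of the root, equal to $\binom{2n-1}{n}$.

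For direction (a), first note that the path-plus-loop graph $G^\mathcal{B}[w_{C_n}]$ is good, so by Lemma \ref{huh} every node in any $\mathcal{B}$-reduction tree rooted at it is a good graph. Next I claim every leaf is alternating. Suppose for contradiction that a leaf $G$ has two edges $e_1, e_2$ sharing a vertex with opposite-sign incidence, so that one of the relations $(5)$--$(9')$ applies to them in $\c$. Among all such commutatively-reducible pairs in $G$, choose one that is not blocked by any priority, for example by iteratively replacing the candidate pair with a higher-priority one whenever possible; this terminates since the label triples used to define priority are well-ordered. By Lemma \ref{huh} reductions preserve good-ness, so the chosen reduction is noncrossing, and Lemma \ref{cross} then guarantees that after finitely many commutations the reduction can actually be executed, contradicting the leaf status of $G$. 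Hence every leaf is good and alternating, i.e.\ a good alternating graph on $[n]$ with $n$ edges. Lemma \ref{labeling} then says that after suitable commutations the edge-labeling of any such leaf becomes lexicographic, which establishes direction (a).

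For direction (b), I need that the total number of leaves in any $\mathcal{B}$-reduction tree rooted at $G^\mathcal{B}[w_{C_n}]$ equals $\binom{2n-1}{n}$. Since every reduction $(5)$--$(9')$ produces two children, each with the same number of edges as the parent, counting leaves in a $\mathcal{B}$-reduction tree is the same as counting monomials in a reduced form in the commutative counterpart $\c$ (every noncommutative reduction parallels the same two-term reduction in $\c$). By Proposition \ref{ce}, evaluating $P_n^{\mathcal{B}^c}$ at $x_{ij}=y_{ij}=z_i=1$ is independent of the order of reductions, so the leaf-count is an invariant of the root graph. Combined with Proposition \ref{non}, that common value is $\binom{2n-1}{n}$. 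Finally, the correspondence between leaves and full-dimensional simplices in a triangulation of $\mathcal{P}(G^\mathcal{B}[w_{C_n}])=\mathcal{P}(C_n^+)$ supplied by the Reduction Lemma shows that no underlying graph occurs twice among the leaves. Since the set of alternating well-structured graphs on $[n]$ has cardinality $\binom{2n-1}{n}$, the set of leaves (as underlying graphs) must coincide with it; together with Lemma \ref{labeling}, this establishes direction (b) and completes the proof.

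The main obstacle I expect is direction (a): making the priority-selection argument rigorous so that Lemma \ref{cross} really does apply at a supposed leaf. Concretely, one has to verify that among the finitely many reducible pairs in a non-alternating good graph there is always a pair with no priority-blocking neighbor $e_3$. The examples of priority in the excerpt suggest that priority is a partial order induced by label ordering along a shared vertex, so a minimality argument in this partial order should produce the needed unblocked pair; however, checking all the case combinations arising from the many reduction rules $(5)$--$(9')$ is where the bookkeeping is heaviest.
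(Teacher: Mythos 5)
Your proposal is correct and follows the same skeleton as the paper's proof: Proposition \ref{non} exhibits one $\mathcal{B}$-reduction tree whose leaves are exactly the $\binom{2n-1}{n}$ alternating well-structured graphs with lexicographic labels, the leaf count is an invariant of the root via the commutative counterpart (you invoke Proposition \ref{ce}; the paper invokes Proposition \ref{2^k}, and the two play the same role here), Lemma \ref{huh} forces every leaf to be good, the Reduction Lemma forbids a graph from appearing twice among the leaves, and Lemma \ref{labeling} produces the lexicographic labels. Where you go beyond the written proof is your direction (a): the paper passes from the invariance of the number of $n$-edge leaves of $\mathcal{S}$-reduction trees to the same statement for $\mathcal{B}$-reduction trees with a bare ``thus,'' which tacitly presupposes that a leaf of a $\mathcal{B}$-reduction tree is commutatively irreducible, i.e.\ alternating; without that, the transfer only yields an upper bound $\le \binom{2n-1}{n}$ on the number of leaves, since a non-alternating leaf would correspond to a monomial that is still reducible in $\mathcal{B}^c(C_n)$. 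Your argument--in a putative non-alternating leaf choose a reducible pair minimal with respect to priority (the selection terminates because a blocking pair always has a strictly smaller label) and apply Lemma \ref{cross}, the noncrossing hypothesis being supplied by goodness via the corollary to Lemma \ref{huh}--is precisely the intended use of the priority machinery of Section \ref{reductionsB}, and making it explicit is a genuine improvement in rigor, at the cost of the case analysis you acknowledge. Two small points to fix: relations (9) and ($9'$) produce three children rather than two, and the identification of the $\mathcal{B}$-leaf count with the number of monomials in a reduced form in $\mathcal{B}^c(C_n)$ does require direction (a) first, so keep the order of the two directions as you have it.
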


\proof

By Proposition \ref{non} there exists a $\mathcal{B}$-reduction tree which satisfies the conditions above. By Proposition \ref{2^k}  the number of graphs with $n$ of  edges among the leaves of an  $\mathcal{S}$-reduction tree is independent of the particular $\mathcal{S}$-reduction tree, and, thus, the same is true for a $\mathcal{B}$-reduction tree.  Since all graphs labeling the leaves of a $\mathcal{B}$-reduction tree  with root labeled by $G^\mathcal{B}[w_{C_n}]$ have to be good by Lemma \ref{huh}, and   no graph, with edge-labels disregarded, can appear twice among the leaves of a $\mathcal{B}$-reduction tree, imply, together with Lemma \ref{labeling},  the statement of Theorem \ref{ajaj}.
\qed

\medskip 

As corollaries of  Theorem \ref{ajaj} we obtain  the  characterziation of reduced forms of the noncommutative monomial $w_{C_n}$, a triangulation of $\mathcal{P}(C_n^+)$ and a way to compute its volume. 

\begin{theorem} \label{main}
If  the polynomial $P^\mathcal{B}_n (x_{ij}, y_{ij}, z_{i})$ is a reduced form of  $w_{C_n}$,  then up to commutations

$$P^\mathcal{B}_n(x_{ij}, y_{ij}, z_{i})=\sum_{T^l} m^\mathcal{B}[T^l],$$

\noindent where the sum runs over all   alternating well-structured graphs $T^l$   on the vertex set $[n]$ with lexicographic  edge-labels.\end{theorem}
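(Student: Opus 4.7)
The plan is to derive Theorem \ref{main} essentially as a direct reformulation of Theorem \ref{ajaj} in the language of polynomials rather than of reduction trees. By definition, a reduced form $P^\mathcal{B}_n$ of $w_{C_n}$ is obtained by a finite sequence of reductions $(5)$–$(9')$, with commutations $(2)$–$(4)$ interspersed as needed, applied until no further reduction is possible. First I would observe that such a sequence naturally determines a $\mathcal{B}$-reduction tree $\mathcal{T}^\mathcal{B}$ rooted at $G^\mathcal{B}[w_{C_n}]$, whose internal nodes record the chosen reductions and whose leaves are labeled by the terminal edge-labeled graphs. Because each relation $(5)$–$(9')$ preserves the total degree of the underlying monomial, every leaf of $\mathcal{T}^\mathcal{B}$ carries a graph with exactly $n$ edges, and hence, modulo commutations of vertex-disjoint variables,
\[
P^\mathcal{B}_n \;=\; \sum_{G \text{ leaf of } \mathcal{T}^\mathcal{B}} m^\mathcal{B}[G].
\]

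The next step is to apply Theorem \ref{ajaj} directly: up to commutations, the set of leaf-labels of any $\mathcal{B}$-reduction tree rooted at $G^\mathcal{B}[w_{C_n}]$ is exactly the set of alternating well-structured graphs $T^l$ on $[n]$ carrying the unique lexicographic edge-labeling. Substituting this description into the displayed sum gives
\[
P^\mathcal{B}_n(x_{ij}, y_{ij}, z_i) \;=\; \sum_{T^l} m^\mathcal{B}[T^l],
\]
which is precisely the claimed formula.

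The main obstacle has already been absorbed into the supporting infrastructure: the substantive work sits in Proposition \ref{non}, Lemma \ref{huh}, Lemma \ref{cross}, and Lemma \ref{labeling}, which together guarantee that every good graph remains good under reductions, that every alternating well-structured graph can be reached as a leaf by some choice of reduction order, and that its edge-labeling can always be normalized to the lexicographic one via the allowed commutations. The only additional point worth flagging explicitly is that each $T^l$ occurs exactly once in the sum; this is the no-repetition observation used in the proof of Theorem \ref{ajaj}, whose ultimate source is the volume/leaf-count identity of Proposition \ref{2^k}: a repeated leaf would inflate the leaf count beyond the value $\binom{2n-1}{n}$ that the commutative count forces on any $\mathcal{B}$-reduction tree of $w_{C_n}$. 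Consequently, beyond carefully restating the passage from reduction trees to polynomial sums, no further technical work is required.
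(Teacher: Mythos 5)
Your proposal is correct and follows essentially the same route as the paper: the paper also obtains Theorem \ref{main} as an immediate corollary of Theorem \ref{ajaj}, by identifying the terms of any reduced form of $w_{C_n}$ with the monomials $m^\mathcal{B}[G]$ attached to the leaves of the corresponding $\mathcal{B}$-reduction tree, with the degree-preservation and no-repetition points handled exactly as you indicate (the latter via the Reduction Lemma/leaf-count argument already inside the proofs of Proposition \ref{non} and Theorem \ref{ajaj}).
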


  \begin{theorem} \label{main2}
If  the polynomial $P^{\mathcal{B}^c}_n(x_{ij}, y_{ij}, z_{i})$ is a reduced form of  $w_{C_n}$ in $\c$,  then

$$P^{\mathcal{B}^c}_n(x_{ij}=y_{ij}=z_{i}=1)= {{2n-1}\choose n}.$$
\end{theorem}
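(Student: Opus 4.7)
The plan is to invoke Proposition \ref{ce}, which guarantees that the quantity $P^{\mathcal{B}^c}_n(x_{ij}=y_{ij}=z_i=1)$ is independent of the order of reductions performed; this lets me choose any convenient reduction strategy in $\mathcal{B}^c(C_n)$ and just compute the value once. First I would note that each reduction rule of $\mathcal{B}^c(C_n)$ (obtained by commutativizing relations $(5)$--$(9')$ of $\mathcal{B}(C_n)$) rewrites a single monomial as a sum of exactly two monomials, each with coefficient $1$ --- there is no $\beta$-correction here, in contrast to $\mathcal{S}(C_n)$. Hence any reduced form of $w_{C_n}$ in $\mathcal{B}^c(C_n)$ is a sum of distinct monomials all with coefficient $1$, and the substitution $x_{ij}=y_{ij}=z_i=1$ simply counts the number of distinct monomials appearing in the reduced form.

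To perform that count, I would use the particular $\mathcal{B}$-reduction tree constructed in the proof of Proposition \ref{non}. By Theorem \ref{ajaj} the leaves of this tree, up to commutation, are in bijection with the alternating well-structured graphs on $[n]$ equipped with lexicographic edge-labels, and Proposition \ref{non} already computes the number of such graphs to be ${{2n-1}\choose n}$. Pushing this tree from $\mathcal{B}(C_n)$ down to $\mathcal{B}^c(C_n)$ (by passing through the quotient $\mathcal{B}(C_n) \to \mathcal{B}^c(C_n) = \mathcal{B}(C_n)/[\mathcal{B}(C_n),\mathcal{B}(C_n)]$) yields a valid reduced form in $\mathcal{B}^c$; leaves that differ only by commutation collapse to the same commutative monomial, so the resulting reduced form has exactly one monomial per alternating well-structured graph on $[n]$.

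Combining the two paragraphs gives $P^{\mathcal{B}^c}_n(x_{ij}=y_{ij}=z_i=1) = {{2n-1}\choose n}$. The only genuinely subtle point, and the one I would check carefully, is that distinct alternating well-structured graphs really do produce distinct commutative monomials in $\mathcal{B}^c(C_n)$, so that no further collapse occurs beyond the one already accounted for by commutation. This is immediate from the map $G \mapsto m^{\mathcal{B}^c}[G] = \prod_{(i,j,\varepsilon)\in E(G)} \mathrm{w}(i,j,\varepsilon)$ defined in Section \ref{sec:red}, since the multiset of edges of $G$ is recoverable from its image under this map. Everything else is a direct application of results already established in the paper.
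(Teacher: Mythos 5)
Your proposal is correct and is essentially the paper's own argument: the paper proves Theorem \ref{main2} in one line by combining Proposition \ref{ce} with Theorem \ref{main} (whose leaf count comes from Proposition \ref{non}), which is exactly the route you spell out in detail. The only slip is your claim that every reduction rule of $\mathcal{B}^c(C_n)$ rewrites a monomial into exactly two monomials — relations $(9)$ and $(9')$ produce three terms — but this is harmless, since the evaluation at $x_{ij}=y_{ij}=z_i=1$ counts the leaves of the reduction tree (equivalently, the sum of the coefficients) in either case.
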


\proof Proposition \ref{ce} and Theorem \ref{main} imply $P^{\mathcal{B}^c}_n(x_{ij}=y_{ij}=z_{i}=1)= {{2n-1}\choose n}.$ \qed

\begin{theorem} \label{main3}
Let $T^l_1, \ldots, T^l_m$ be all  alternating well-structured graphs on the vertex set $[n]$. Then   $\mathcal{P}(T^l_1), \ldots, \mathcal{P}(T^l_m)$  are $n$-dimensional simplices forming a triangulation of $\mathcal{P}(C_n^+)$. Furthermore, 
$$\vol_n(\mathcal{P}(C_n^+))={{2n-1}\choose n}\frac{2}{n!}.$$
\end{theorem}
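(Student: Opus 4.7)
The proof will be a direct assembly of results already in hand, so the role of the proposal is mainly to identify which statements combine and in what order. The plan is as follows.

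First, I would observe that $G^\mathcal{B}[w_{C_n}]$ is (ignoring the edge-labels $1,\dots,n$) exactly the graph $P^l=([n],\{(n,n,+),(i,i+1,-)\mid i\in[n-1]\})$, so $\mathcal{P}(G^\mathcal{B}[w_{C_n}])=\mathcal{P}(P^l)=\mathcal{P}(C_n^+)$, as already noted in Section~\ref{sec:root}. The starting graph is good (trivially well-structured and well-labeled), so by Lemma~\ref{huh} every node of a $\mathcal{B}$-reduction tree rooted at $G^\mathcal{B}[w_{C_n}]$ is a good graph, and in particular every leaf lies in $\mathcal{L}_n$.

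Next, I would iterate the Reduction Lemma (Lemma~\ref{reduction_lemma}) along any such $\mathcal{B}$-reduction tree. At each internal node $G_0$ the lemma cuts $\mathcal{P}(G_0)$ into two $d$-dimensional pieces $\mathcal{P}(G_1),\mathcal{P}(G_2)$ with interiors disjoint and meeting along the $(d-1)$-dimensional facet $\mathcal{P}(G_3)$. Induction on the tree therefore shows that $\mathcal{P}(C_n^+)$ is the union of the polytopes $\mathcal{P}(G)$ indexed by the leaves $G$ of the tree, with pairwise disjoint interiors. By Theorem~\ref{ajaj} this set of leaves is, up to commutation, precisely the set of all alternating well-structured graphs $T^l$ on $[n]$ (with lexicographic edge-labels). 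Thus the decomposition reads
\begin{equation*}
\mathcal{P}(C_n^+)=\bigcup_{i=1}^{m}\mathcal{P}(T^l_i),
\end{equation*}
with pairwise disjoint interiors.

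To see that each $\mathcal{P}(T^l_i)$ is an $n$-dimensional simplex, I would invoke Lemma~\ref{simplex}: $T^l_i$ is alternating by hypothesis and connected with $n$ edges by the well-structuredness condition $(vi)$, with its unique cycle being the positive loop, whose single positively-labeled edge makes the odd-cycle condition of Lemma~\ref{fong} trivially hold; hence $T^l_i\in\mathcal{L}_n$ and $\mathcal{P}(T^l_i)$ is a simplex of full dimension $n$. This yields the triangulation statement.

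Finally, for the volume, I would compute each piece separately and sum. Each $T^l_i$ is alternating, connected, has $n$ edges, and has exactly one cycle-containing component (the loop), so Lemma~\ref{2^k/d!} with $k=1$, $d=n$ gives $\vol_n(\mathcal{P}(T^l_i))=2/n!$. The count $m=\binom{2n-1}{n}$ was established in Proposition~\ref{non}. Therefore
\begin{equation*}
\vol_n(\mathcal{P}(C_n^+))=\sum_{i=1}^{m}\vol_n(\mathcal{P}(T^l_i))=\binom{2n-1}{n}\cdot\frac{2}{n!}.
\end{equation*}
There is essentially no obstacle left at this stage; the only point that deserves care is verifying that the Reduction Lemma really does iterate to give a triangulation (as opposed to a mere set-theoretic cover), which follows because at each step the intersection $\mathcal{P}(G_1)\cap\mathcal{P}(G_2)=\mathcal{P}(G_3)$ is of strictly smaller dimension, so the interiors of the final simplices are disjoint.
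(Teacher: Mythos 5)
Your proof is correct and follows essentially the same route as the paper: the paper's own proof simply cites the Reduction Lemma (iterated along the $\mathcal{B}$-reduction tree whose leaves are identified by Theorem \ref{ajaj}) for the triangulation claim, and Proposition \ref{2/n!} together with the count $\binom{2n-1}{n}$ from Proposition \ref{non} for the volume. You have merely unpacked Proposition \ref{2/n!} into its ingredients (Lemma \ref{simplex}, Lemma \ref{fong}, the $2/n!$ simplex-volume computation), which is the same argument in expanded form.
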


\proof
 The Reduction Lemma implies the first claim, and  Proposition \ref{2/n!} implies  $\vol_n(\mathcal{P}(C_n^+))={{2n-1}\choose n}\frac{2}{n!}.$
\qed

The value of  the volume of $\mathcal{P}(C_n^+)$ has previously been  observed by Fong  \cite[p. 55]{fong}. 
 
 \section{The general case}
  \label{sec:gen} 
  
  In this section we find   analogues of Theorems \ref{ajaj}, \ref{main}, \ref{main2} and \ref{main3} for any well-structured graph $T^l$ on the vertex set $[n]$.  
  
\begin{proposition} \label{gen:non} Let $T^l$ be a well-structured graph on the vertex set $[n]$.  By choosing the series of reductions suitably, the set of leaves of a $\mathcal{B}$-reduction tree with root labeled by $T^l$ can be all 
 alternating well-structured spanning graphs $G$ of  $\overline{T^l}$  on the vertex set $[n]$ with lexicographic  edge-labels. 
 \end{proposition}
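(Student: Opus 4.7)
The plan is to adapt the two-phase reduction strategy from Proposition \ref{non} to an arbitrary well-structured $T^l$, leveraging the structural machinery already in place. By Lemma \ref{huh}, every node of a $\mathcal{B}$-reduction tree rooted at $T^l$ is good; by the Reduction Lemma (Lemma \ref{reduction_lemma}), the top-dimensional leaves triangulate $\mathcal{P}(T^l) = \mathcal{P}(\overline{T^l})$; and by Lemma \ref{simplex}, a leaf with $n$ edges must be alternating. Together these three facts already force every leaf to label an alternating well-structured spanning subgraph of $\overline{T^l}$, so only the \emph{converse} direction --- that every such $G$ arises as a leaf under some schedule --- is substantive.

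To realize a chosen target $G$, I would schedule reductions in two waves. First, reduce all interior configurations consisting solely of negative edges via rules $(5)$ and $(5')$; this is essentially a type $A$ computation governed by \cite[Theorem 18]{kar}, and can be driven to produce any noncrossing alternating pattern on the negative-edge part of $G$. Second, reduce pairs involving a positive edge or the loop via rules $(6)$--$(9')$, honoring the priority order introduced just before Lemma \ref{cross}. Lemma \ref{cross} then guarantees that after finitely many allowed commutations the next reduction in the schedule is actually applicable in $\mathcal{B}(C_n)$, so the execution is well defined.

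The main obstacle is verifying that this schedule hits every alternating well-structured spanning subgraph of $\overline{T^l}$. I expect to proceed by induction on the number of non-alternating vertex incidences remaining: at each reduction, exactly one of the three commutative children $G_1,G_2,G_3$ agrees with $G$ on the affected edges, and I must check that the priority order never diverts the descent away from that child. Uniqueness --- that no target $G$ is produced twice --- is automatic from the Reduction Lemma, since distinct $n$-edge leaves give simplices with disjoint interiors partitioning $\mathcal{P}(T^l)$. Finally, the lexicographic edge-label claim follows immediately from Lemma \ref{labeling}, as every leaf is an alternating good graph whose well-labeling can be permuted into lexicographic order by commutations alone, which do not alter the underlying graph.
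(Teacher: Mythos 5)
Your first paragraph is fine and matches the paper's opening observation: goodness is preserved (Lemma \ref{huh}), leaves have $n$ edges and must be alternating well-structured spanning subgraphs of $\overline{T^l}$, and no such subgraph can occur twice by the Reduction Lemma. But the substantive direction --- that \emph{every} alternating well-structured spanning subgraph $G$ of $\overline{T^l}$ is actually realized as a leaf for a suitable schedule --- is exactly what you do not prove. You propose to rerun the two-phase schedule of Proposition \ref{non} and then "check that the priority order never diverts the descent," but that check is the whole difficulty, and the explicit computations that made Proposition \ref{non} work do not transfer: there the root was the very special graph $P^l$, so after the type $A_{n}$ phase of \cite[Theorem 18]{kar} the only remaining reductions involved edges at the vertex $n$, and the leaves of the star-with-loop subproblem could be listed by inspection. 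For a general well-structured $T^l$ the positive edges and the loop sit in arbitrary positions, $\overline{T^l}$ is more complicated, and no analogue of that explicit leaf description is available; your induction "on the number of non-alternating incidences," with the claim that exactly one child agrees with the target on the affected edges, is a plan rather than an argument, and it is not clear it survives the noncommutative priority constraints you yourself flag.

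The paper takes a different and shorter route that avoids this entirely: it first notes that any well-structured $T^l$ can be obtained as a $\mathcal{B}$-successor of $P^l$, so a reduction tree rooted at $T^l$ can be embedded as a subtree of a reduction tree rooted at $P^l$. By Theorem \ref{ajaj} the leaves of the big tree are \emph{all} alternating well-structured graphs on $[n]$ (with lexicographic labels), and by the Reduction Lemma two incomparable nodes $T^l$, $T^l_1$ of the big tree have polytopes with disjoint interiors, so $\overline{T^l}\cap\overline{T^l_1}$ contains no well-structured spanning graph; hence every alternating well-structured spanning subgraph of $\overline{T^l}$ must appear among the leaves below the node $T^l$, i.e.\ as a leaf of the subtree rooted at $T^l$. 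If you want to salvage your approach, you need either to carry out the realization argument for general $T^l$ in full (including the analogue of the star-with-loop analysis), or to switch to this embedding-plus-disjointness argument, for which Theorem \ref{ajaj} and Lemma \ref{reduction_lemma} already do the heavy lifting.
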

 
  \begin{proof}  
All   graphs labeling the leaves of a  $\mathcal{B}$-reduction tree  must be   alternating well-structured spanning graphs $G$ of  $\overline{T^l}$. Also, it is possible to obtain any  well-structured graph $T^l$ on the vertex set $[n]$   as an $\mathcal{B}$-successor of $P^l$. Furthermore, if $T^l$ and $T_1^l$ are two $\mathcal{B}$-successor of $P^l$ in the same  $\mathcal{B}$-reduction tree, and neither is the $\mathcal{B}$-successor of the other, then the intersection of $\overline{T^l}$ and $\overline{T_1^l}$ does not contain a well-structured graph $G$, as the existence of such a graph  would imply that  $\mathcal{P}(T^l)$ and $\mathcal{P}(T_1^l)$ have a common interior point, contrary to the Reduction Lemma.  Since the set of leaves of a $\mathcal{B}$-reduction tree with root labeled by  $P^l$ is, up to commutations,  the set of all   alternating well-structured graphs    on the vertex set $[n]$ with lexicographic  edge-labels according to Theorem \ref{ajaj}, Proposition \ref{gen:non} follows.
\end{proof}

  \begin{theorem} \label{gen:ajaj} Let  $T^l$ be a well-structured graph on the vertex set $[n]$.   The set of leaves of a $\mathcal{B}$-reduction tree  with root labeled $T^l$ is, up commutations,  the set of all    alternating well-structured spanning graphs $G$ of  $\overline{T^l}$  on the vertex set $[n]$ with lexicographic  edge-labels.
\end{theorem}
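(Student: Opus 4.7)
The plan is to proceed exactly as in the proof of Theorem \ref{ajaj}, using Proposition \ref{gen:non} in the role that Proposition \ref{non} played there. First I will observe that, since $T^l$ is well-structured, condition $(vi)$ of that definition forces $T^l$ to have exactly $n$ edges (one loop plus $n-1$ non-loop edges of a spanning tree), so $\mathcal{P}(T^l)$ is $n$-dimensional and every graph with $n$ edges appearing as a leaf of a $\mathcal{B}$-reduction tree rooted at $T^l$ corresponds to a maximal simplex in the induced subdivision of $\mathcal{P}(T^l)$.

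Next I would combine four ingredients. (a) By Lemma \ref{huh}, every node of any $\mathcal{B}$-reduction tree rooted at the good graph $T^l$ is good; in particular every leaf is well-structured and, if alternating, must be an alternating well-structured spanning graph of $\overline{T^l}$ (any edge appearing along the tree lies in $\overline{T^l}$ by the Reduction Lemma). (b) By the Reduction Lemma, distinct leaves with $n$ edges correspond to simplices with disjoint interiors in the subdivision of $\mathcal{P}(T^l)$, so no underlying (edge-label-free) graph occurs twice among the leaves. (c) By the commutative analogue Proposition \ref{2^k} applied with $d=n$ and $k=1$, the total number $f(T^l)$ of leaves with $n$ edges is independent of the order of reductions; in particular it equals the number produced by the specific reduction tree of Proposition \ref{gen:non}, namely the total count of alternating well-structured spanning graphs of $\overline{T^l}$. (d) By Lemma \ref{labeling}, every alternating good graph can be brought, via commutations alone, into its lexicographic edge-labeling, so all leaves can be normalized to carry lexicographic labels.

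Putting (a)--(d) together: any $\mathcal{B}$-reduction tree rooted at $T^l$ yields, among its $n$-edge leaves, a family of alternating well-structured spanning graphs of $\overline{T^l}$ with no repetitions, of cardinality equal to the total number of such graphs; hence it realizes all of them. Modulo commutations, Lemma \ref{labeling} forces each to appear with lexicographic edge-labels. This gives the claimed characterization.

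The step I expect to require the most care is the invariance of the leaf-count across different reduction trees. The inherited statement Proposition \ref{2^k} is phrased in the commutative subdivision algebra $\mathcal{S}(C_n)$, whereas Theorem \ref{gen:ajaj} concerns the noncommutative $\mathcal{B}$-reduction trees; one must check that forgetting edge-labels sends a $\mathcal{B}$-reduction tree to an $\mathcal{S}$-reduction tree on the same underlying graphs, so that the count $f(T^l)$ transfers. This is straightforward from the description of the reductions in Section \ref{reductionsB}, since each noncommutative reduction rule $(5)$--$(9')$ is, after discarding labels and commutation bookkeeping, one of the commutative reductions $(\ref{red1})$--$(\ref{red6})$ with $\beta=0$; but it is the only spot where the labeled/unlabeled distinction must be handled carefully, and it is also where Lemma \ref{labeling} enters to reconcile any discrepancy in label orders between the abstract tree produced by Proposition \ref{gen:non} and an arbitrary tree.
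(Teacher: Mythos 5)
Your proposal is correct and follows essentially the same route as the paper: the paper proves Theorem \ref{gen:ajaj} exactly by repeating the argument of Theorem \ref{ajaj} with Proposition \ref{gen:non} in place of Proposition \ref{non}, i.e., existence of one realizing tree, goodness of all nodes via Lemma \ref{huh}, no repeated underlying graphs via the Reduction Lemma, invariance of the leaf count via Proposition \ref{2^k} (transferred from $\mathcal{S}$- to $\mathcal{B}$-reduction trees), and normalization to lexicographic labels via Lemma \ref{labeling}. Your extra remark about carefully transferring the count from the commutative to the noncommutative setting is a point the paper treats tersely, but it is the same mechanism the paper relies on.
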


\proof The proof is analogous to that of Theorem \ref{ajaj} using Proposition \ref{gen:non} instead of   Proposition \ref{non}.

\qed

\medskip 

As corollaries of  Theorem \ref{gen:ajaj} we obtain  the  characterziation of reduced forms of the noncommutative monomial $m^\mathcal{B}[T^l]$, a triangulation of $\mathcal{P}(T^l)$ and a way to compute its volume, for a well-structured graph  $T^l$ on the vertex set $[n]$.

\medskip

\begin{theorem} {\bf (Noncommutative part.)} If  the polynomial $P^\mathcal{B}_n(x_{ij}, y_{ij}, z_i)$ is a reduced form of  $m^\mathcal{B}[T^l]$ for a well-structured graph  $T^l$ on the vertex set $[n]$,  then up to commutations

$$P^\mathcal{B}_n(x_{ij}, y_{ij}, z_i)=\sum_G m^\mathcal{B}[G],$$

\noindent where the sum runs over all   alternating well-structured spanning graphs $G$ of  $\overline{T^l}$  on the vertex set $[n]$ with lexicographic  edge-labels.    
\end{theorem}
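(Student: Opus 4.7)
The plan is to derive the identity as an immediate corollary of Theorem~\ref{gen:ajaj}. By the very definition of a reduced form, any polynomial $P^\mathcal{B}_n(x_{ij},y_{ij},z_i)$ obtained from $m^\mathcal{B}[T^l]$ by successively applying the reductions $(5)$--$(9')$, interspersed with the commutations $(2)$--$(4)$, until no further reduction is possible, can be encoded as a $\mathcal{B}$-reduction tree $\mathcal{T}^\mathcal{B}$ rooted at $G^\mathcal{B}[m^\mathcal{B}[T^l]] = T^l$. The reduced form is then $P^\mathcal{B}_n = \sum_L m^\mathcal{B}[L]$, where $L$ runs over the edge-labeled graphs at the leaves of $\mathcal{T}^\mathcal{B}$. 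Thus the only thing to verify is that this collection of leaves, up to commutations, is exactly the set of alternating well-structured spanning graphs of $\overline{T^l}$ equipped with lexicographic edge-labels.

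First I would invoke Theorem~\ref{gen:ajaj}, which identifies this set exactly, independently of the particular sequence of reductions chosen. Next I would translate the phrase ``up to commutations'' correctly onto the monomial side: Lemma~\ref{huh} ensures every leaf $L$ of $\mathcal{T}^\mathcal{B}$ is a good graph, and Lemma~\ref{labeling} shows that any good graph can be brought, by allowed commutations, to one carrying lexicographic edge-labels. On the monomial side this is precisely a sequence of type $(2)$--$(4)$ commutations in $\mathcal{B}(C_n)$, so up to such commutations
$$\sum_L m^\mathcal{B}[L] = \sum_G m^\mathcal{B}[G],$$
with $G$ running over the lexicographically labeled alternating well-structured spanning graphs of $\overline{T^l}$, which is the desired identity.

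A last bookkeeping point is that distinct leaves of $\mathcal{T}^\mathcal{B}$ give distinct underlying graphs, so the right-hand sum is indexed by a set rather than a multiset. This follows from the Reduction Lemma (Lemma~\ref{reduction_lemma}) applied iteratively: the leaves correspond to top-dimensional simplices $\mathcal{P}(G)$ in a subdivision of $\mathcal{P}(T^l)$, and two such simplices with disjoint interiors cannot coincide as graphs. There is no substantial obstacle here; the genuine combinatorial work has already been absorbed into Theorem~\ref{gen:ajaj}, which in turn rests on the Reduction Lemma together with Lemmas~\ref{huh} and~\ref{labeling}. The present theorem is simply the translation of that combinatorial result into a statement about reduced forms in the noncommutative algebra $\mathcal{B}(C_n)$.
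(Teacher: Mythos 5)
Your proposal is correct and follows essentially the same route as the paper: the paper states this theorem as a direct corollary of Theorem~\ref{gen:ajaj}, with the reduced form read off as the sum of monomials over the leaves of a $\mathcal{B}$-reduction tree rooted at $T^l$, exactly as you argue. Your additional remarks (goodness of the leaves via Lemma~\ref{huh}, lexicographic labels via Lemma~\ref{labeling}, and distinctness of leaf graphs via the Reduction Lemma) are precisely the ingredients already packaged inside Theorem~\ref{gen:ajaj} and its proof, so nothing is missing.
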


\medskip

\begin{theorem}
 {\bf (Commutative part.)}  If  the polynomial $P^{\mathcal{B}^c}_n(x_{ij}, y_{ij}, z_i)$ is a reduced form of  $m^{\mathcal{B}^c}[T^l]$ for  a well-structured graph  $T^l$ on the vertex set $[n]$,  then

  $$P^{\mathcal{B}^c}_n(x_{ij}=y_{ij}=z_i=1)=f_{T^l},$$

\noindent where $f_{T^l}$ is the number of alternating well-structured spanning graphs $G$ of  $\overline{T^l}$. 
\end{theorem}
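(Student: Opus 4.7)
The plan is to transport the reduced form furnished by the preceding noncommutative part directly down to $\c$ via the natural projection $\pi:\b\to\c=\b/[\b,\b]$, and then apply Proposition~\ref{ce} to read off the value at the all-ones specialization. I will first observe that $T^l\in\mathcal{L}_n$: well-structuredness condition $(vi)$ forces the unique simple cycle of $T^l$ to be its loop, which contains one (hence an odd number of) positively labeled edge, so Lemma~\ref{fong} applies. Thus Proposition~\ref{ce} is available and guarantees that the value of any reduced form of $m^{\mathcal{B}^c}[T^l]$ at $x_{ij}=y_{ij}=z_i=1$ is well-defined, so it suffices to exhibit a single reduced form and evaluate.

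I would produce such a reduced form by pushing the noncommutative reduced form $P^\mathcal{B}_n=\sum_G m^\mathcal{B}[G]$ from the Noncommutative part ($G$ ranging over the alternating well-structured spanning graphs of $\overline{T^l}$ with lexicographic labels) through $\pi$. The labels vanish in the quotient, leaving
\[
m^{\mathcal{B}^c}[T^l] \;=\; \pi\!\left(P^\mathcal{B}_n\right) \;=\; \sum_G m^{\mathcal{B}^c}[G]
\]
in $\c$, the sum now indexed by the $f_{T^l}$ underlying unlabeled alternating well-structured spanning graphs of $\overline{T^l}$. Distinct $G$'s contribute distinct commutative monomials, since each variable corresponds to exactly one edge and a graph is determined by its edge set. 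I would then verify that this expression is itself reduced in $\c$: inspection of the six reduction schemes (\ref{graphs1})-(\ref{graphs6}) shows each one requires a pair of edges incident to a common vertex with opposite signs of incidence, a configuration that an alternating graph forbids by definition, so no further reduction is possible on any summand.

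At that point the theorem is essentially in hand: evaluating $\sum_G m^{\mathcal{B}^c}[G]$ at $x_{ij}=y_{ij}=z_i=1$ gives $1$ per summand for a total of $f_{T^l}$, and Proposition~\ref{ce} ensures any other reduced form evaluates to the same number. The only step requiring care is the incidence-sign case check that an alternating graph simultaneously blocks all six reduction rules; this is a short inspection rather than a real calculation, but it is the only place the specific type $C_n$ structure, with its mix of negative edges, positive edges, and loops, enters the argument, so I would write the six cases out explicitly.
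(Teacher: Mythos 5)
Your proposal is correct and follows essentially the same route as the paper: the statement is a corollary of the noncommutative characterization (Theorem \ref{gen:ajaj}) combined with Proposition \ref{ce}, exactly as Theorem \ref{main2} follows from Theorem \ref{main} and Proposition \ref{ce}. The details you add — checking $T^l\in\mathcal{L}_n$ via Lemma \ref{fong}, projecting the noncommutative reduction sequence to $\c$, and noting that alternating graphs admit no further reductions — are precisely the steps the paper leaves implicit.
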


   \begin{theorem}   {\bf (Triangulation and volume.)} 
Let $T^l_1, \ldots, T^l_m$ be all  alternating well-structured spanning graphs   of  $\overline{T^l}$ for a  well-structured graph  $T^l$ on the vertex set $[n]$. Then   $\mathcal{P}(T^l_1), \ldots, \mathcal{P}(T^l_m)$  are $n$-dimensional simplices forming a triangulation of $\mathcal{P}(T^l)$. Furthermore, 
$$\vol_n(\mathcal{P}(T^l))=f_{T^l}\frac{2}{n!},$$ \noindent where $f_{T^l}$ is the number of alternating well-structured spanning graphs $G$ of  $\overline{T^l}$. 
\end{theorem}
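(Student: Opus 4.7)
The plan is to obtain this result as a direct packaging of Theorem \ref{gen:ajaj}, the Reduction Lemma (Lemma \ref{reduction_lemma}), the simplex characterization (Lemma \ref{simplex}), and Proposition \ref{2/n!}. The substantive combinatorics has already been carried out in the preceding sections, so the work here is primarily to assemble these ingredients and verify that the resulting decomposition is a genuine triangulation.

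First I would fix a $\mathcal{B}$-reduction tree $\mathcal{T}^\mathcal{B}$ (equivalently, an $\mathcal{S}$-reduction tree, since the underlying graph reductions coincide) with root $T^l$. By Theorem \ref{gen:ajaj}, the graphs labeling its leaves are, up to commutations, exactly $T^l_1, \ldots, T^l_m$, the alternating well-structured spanning graphs of $\overline{T^l}$. Each $T^l_i$ is connected, has $n$ edges, and its unique simple cycle is the loop (a single positive edge, hence an odd number of positive edges), so by Lemma \ref{fong} we have $T^l_i \in \mathcal{L}_n$. Combined with the alternating property, Lemma \ref{simplex} then shows that each $\mathcal{P}(T^l_i)$ is an $n$-dimensional simplex; the same reasoning shows that $\mathcal{P}(T^l)$ itself is $n$-dimensional.

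Next I would apply the Reduction Lemma inductively along $\mathcal{T}^\mathcal{B}$. At each internal node $G_0$, the lemma gives $\mathcal{P}(G_0) = \mathcal{P}(G_1) \cup \mathcal{P}(G_2)$ with the two $n$-dimensional pieces having disjoint interiors and intersection $\mathcal{P}(G_3)$ of strictly lower dimension. An induction on tree depth propagates this to the leaves: any two leaf polytopes $\mathcal{P}(T^l_i)$ and $\mathcal{P}(T^l_j)$ descend from some first divergence node, whose two children already carry polytopes with disjoint interiors, and all subsequent descendants lie inside their respective sibling, so disjointness is inherited. This yields $\mathcal{P}(T^l) = \bigcup_{i=1}^m \mathcal{P}(T^l_i)$ as a bona fide triangulation.

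For the volume, Proposition \ref{2/n!} applies directly to $T^l$: it is connected with $n$ edges and lies in $\mathcal{L}_n$, so $\vol_n(\mathcal{P}(T^l)) = \frac{2 f(T^l)}{n!}$, where $f(T^l)$ counts the leaves of the reduction tree labeled by graphs with $n$ edges. Theorem \ref{gen:ajaj} identifies this count with $f_{T^l} = m$, giving the stated volume $f_{T^l} \cdot \frac{2}{n!}$. The only genuine care needed is the inductive disjoint-interior propagation in the triangulation step, since the Reduction Lemma by itself controls only sibling pairs; this is a routine depth induction and presents no real obstacle once one observes that each subtree's polytope sits inside the piece assigned by the Reduction Lemma at the divergence node.
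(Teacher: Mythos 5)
Your proposal is correct and follows essentially the same route as the paper, which deduces this theorem directly from Theorem \ref{gen:ajaj} together with the Reduction Lemma (for the triangulation claim, exactly as in the proof of Theorem \ref{main3}) and Proposition \ref{2/n!} (for the volume). You simply spell out details the paper leaves implicit, such as the $\mathcal{L}_n$ membership via Lemma \ref{fong}, the simplex property via Lemma \ref{simplex}, and the depth induction propagating disjointness of interiors down the reduction tree.
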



\section{A more general noncommutative algebra $\g$}
\label{11}
 
 In this section we define the noncommutative algebra $\g$, which specializes to $\b$ when we set $\beta=0$. We prove analogs of the results presented so far for this more general algebra. We also provide a way for calculating Ehrhart polynomials for certain type $C_n$ root polytopes.

    Let the {\bf $\beta$-bracket algebra} $\g$ {\bf  of type $C_n$} be an associative algebra over $\mathbb{Q}[\beta]$, where $\beta$ is a variable (and a central element),   with a set of generators  $\{x_{ij}, y_{ij}, z_i \mid 1 \leq i\neq j\leq n\}$ subject to the following relations:
 
 (1) $x_{ij}+x_{ji}=0,$ $y_{ij}=y_{ji}$, for $i \neq j$,

(2) $z_i z_j=z_j z_i$

($3$) $x_{ij}x_{kl}= x_{kl}x_{ij}$, $y_{ij}x_{kl}= x_{kl}y_{ij}$, $y_{ij}y_{kl}= y_{kl}y_{ij}$, for  $i <j, k<l$ distinct.

($4$) $z_i x_{kl}=x_{kl} z_i$, $z_i y_{kl}=y_{kl} z_i$, for all $i\neq k, l$

(5) $x_{ij}x_{jk}=x_{ik}x_{ij}+x_{jk}x_{ik}+\beta x_{ik}$,  for  $1\leq i<j<k\leq n$,

($5'$) $x_{jk}x_{ij}=x_{ij}x_{ik}+x_{ik}x_{jk}+\beta x_{ik}$, for  $1\leq i<j<k\leq n$,

(6) $x_{ij}y_{jk}=y_{ik}x_{ij}+y_{jk}y_{ik}+\beta y_{ik}$,  for  $1\leq i<j<k\leq n$,

($6'$) $y_{jk}x_{ij}=x_{ij}y_{ik}+y_{ik}y_{jk}+\beta y_{ik}$, for  $1\leq i<j<k\leq n$,

(7) $x_{ik}y_{jk}=y_{jk}y_{ij}+y_{ij}x_{ik}+\beta y_{ij}$, for  $1\leq i<j<k\leq n$,

($7'$) $y_{jk}x_{ik}=y_{ij}y_{jk}+x_{ik}y_{ij}+\beta y_{ij}$, for  $1\leq i<j<k\leq n$,

(8) $y_{ik}x_{jk}=x_{jk}y_{ij}+y_{ij}y_{ik}+\beta y_{ij}$, for  $1\leq i<j<k\leq n$,

($8'$) $x_{jk}y_{ik}=y_{ij}x_{jk}+y_{ik}y_{ij}+\beta y_{ij}$, for  $1\leq i<j<k\leq n$,

(9) $x_{ij}z_j=z_i x_{ij}+ y_{ij} z_i + z_j y_{ij}+\beta z_{i}+\beta y_{ij}$, for $1 \leq i<j \leq n,$

($9'$) $z_jx_{ij}= x_{ij}z_i+  z_i y_{ij}+  y_{ij}z_j+\beta z_{i}+\beta y_{ij}$, for $1 \leq i<j \leq n.$

\medskip

Kirillov \cite{kir} made Conjecture 1 not just for $\b$, but for a more general   $\beta$-bracket algebra of type $C_n$, which is almost identical to $\g$; it   differs in a term in relations (9) and $(9')$.  We prove   the analogue of Conjecture 1  for $\g$.

Notice that the commutativization of $\g$ yields the relations of $\t$, except for relations (9) and $(9')$ of $\g$, which can be obtained by combining relations (6) and (7) of $\t$. Since the Reduction Lemma (Lemma \ref{reduction_lemma}) hold for $\t$, so does it for $\g$, keeping in mind that relations (9) and $(9')$ of $\g$ are obtained by combining relations (6) and (7)  of $\t$. As a result, we can think of  relations $(5)-(9')$ of $\g$ as operations   subdividing root polytopes into smaller polytopes and keeping track of their lower dimensional intersections. 

A {\bf $\r$-reduction tree} is analogous to an $\mathcal{S}$-reduction tree, just that the children of the nodes are obtained by the relations $(5)-(9')$ of $\g$, and now some nodes have five, and some nodes have three children. See Figure \ref{fig:beta} for an example. If $\mathcal{T}^{\r}$ is a $\r$-reduction tree with root labeled $G$ and  leaves labeled by graphs $G_1, \ldots, G_q$, then  \begin{equation} \label{red} \mathcal{P}^\circ(G)=\mathcal{P}^\circ(G_1)\cup \cdots \cup \mathcal{P}^\circ(G_q),\end{equation} by an analogue of the Reduction Lemma.

    \begin{figure}[htbp] 
\begin{center} 
\includegraphics[width=1.2\textwidth]{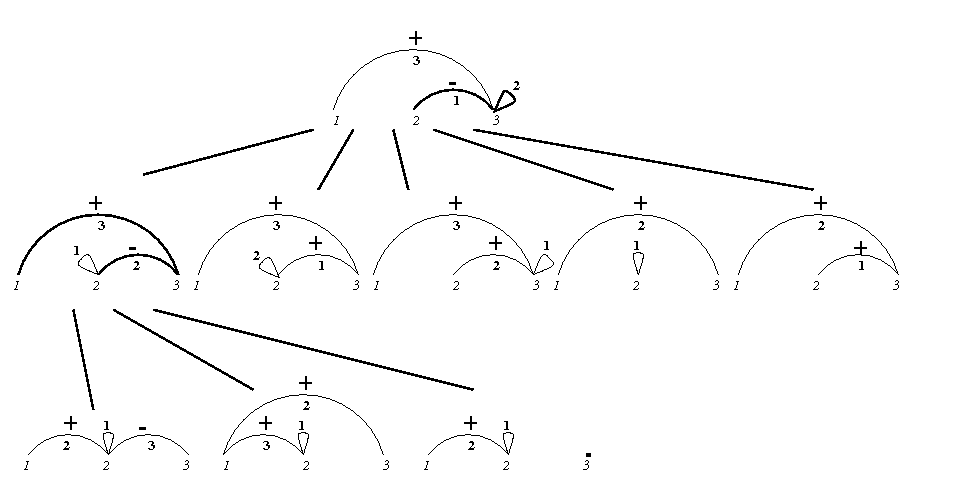} 
\caption{A $\r$-reduction tree with root corresponding to the monomial $x_{23}z_3y_{13}$.  Summing the monomials corresponding to  the graphs labeling the  leaves  of the reduction tree   multiplied by suitable powers of $\beta$, we obtain a reduced form $P^{\r}_n$ of  $x_{23}z_3y_{13}$,  $P^{\r}_n=z_2y_{12}x_{23}+z_2y_{13}y_{12}+\beta z_2y_{12}+y_{23}z_2y_{13}+z_3y_{23}y_{13}+ \beta z_2 y_{13}+\beta y_{23}y_{13}$. } 
\label{fig:beta}
\end{center} 
\end{figure} 

In order to prove an analogue of Proposition \ref{non} for the algebra $\g$, we need a definition more general  than well-structured. Thus we now define weakly-well-structured graphs.

A graph  $H$ on the vertex set $[n]$ and $p\leq n$ edges  is \textbf{weakly-well-structured} if it satisfies the following conditions:

$(i)$ $H$ is noncrossing. 

$(ii)$ For any two edges $(i, j, +), (k, l, +) \in H$, $i<j, k<l$, it must be that $i<l$ and $k<j$.

$(iii)$ For any two edges $(i, i, +), (k, l, +) \in H$, $k<l$, it must be that $k\leq i \leq l$ .


$(iv)$ There are no edges $(i, i, +), (k, j, -) \in H$ with $k<i<j$.

$(v)$ There are no edges $ (i, j, +), (k, l, -) \in H$ with $k\leq i<j \leq l$.



$(vi)$ Graph $H$  contains at most one loop, and $H$ contains no nonloop cycles.

$(vii)$ Graph $H$  contains a positive edge incident to vertex $1$.

             Note that well-structured graphs are also weakly-well-structured.

    \begin{proposition} \label{prop:P^l} By choosing the set of reductions suitably, the set of leaves of a $\r$-reduction tree  $\mathcal{T}^{\r}$ with root labeled by $P^l=([n], \{(n,n, +), (i, i+1, -) \mid i \in [n-1]\})$ can be the set of all  alternating weakly-well-structured subgraphs $G$ of  $\overline{P^l}$ with lexicographic  edge-labels.  
    \end{proposition}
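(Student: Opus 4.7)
The plan is to adapt the proof of Proposition \ref{non} to the $\beta$-bracket algebra $\g$, where relations (5)--(8') produce a $\beta$-weighted "edge-dropping" child and relations (9),(9') produce two such children, so that leaves may be proper (non-spanning) subgraphs — precisely matching the relaxation from well-structured to \emph{weakly}-well-structured. First, I would verify the easy direction: every leaf of a $\r$-reduction tree rooted at $P^l$ is necessarily an alternating weakly-well-structured subgraph of $\overline{P^l}$ with lexicographic edge-labels. This is the analog of Lemma \ref{huh} in the $\beta$-setting. The noncrossing and alternating properties, as well as conditions $(ii)$--$(v)$ of weakly-well-structured graphs, are preserved by local inspection of each reduction; the $\beta$-children inherit these from the parent while losing one edge; condition $(vi)$ (at most one loop, no nonloop cycles) is preserved since the only reduction introducing a loop is (9)/(9'), which consumes a loop as well; condition $(vii)$ (a positive edge incident to $1$) is forced since $P^l$ has vertex $1$ incident to $(1,2,-)$, and tracking this edge through reductions shows that after every reduction involving this edge (which must be the first non-loop reduction along any path chain from $1$), a positive edge incident to $1$ persists.

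For the other direction I would construct an explicit $\r$-reduction tree achieving the claimed leaf set. Following the strategy of Proposition \ref{non}, I first perform only those reductions that do not involve the loop $(n,n,+)$; by the type $A_{n-1}$ $\beta$-analog of \cite[Theorem 18]{kar} applied to the path $1\text{-}2\text{-}\cdots\text{-}n$, these reductions produce, up to the lexicographic labeling convention supplied by Lemma \ref{labeling}, every alternating noncrossing forest on $[n]$ that is a subgraph of the transitive closure of the $A_{n-1}$ path — together with the loop $(n,n,+)$ still present. Call these intermediate graphs $T_1^l,\ldots,T_w^l$. Next, on each $T_i^l$ I perform all reductions involving the loop. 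Since only edges incident to $n$ can interact with $(n,n,+)$, the local behaviour is determined by the subgraph $G_k=([k+1],\{(k+1,k+1,+),(i,k+1,-)\mid i\in[k]\})$ with $k$ the number of edges of $T_i^l$ incident to $n$. Induction on $k$ then reduces the problem to classifying the leaves of a $\r$-reduction tree rooted at $G_k$: at each step a $(9')$-reduction on $(i,k+1,-)$ and the loop produces five children — three noncontracting ones that propagate the loop toward $i$ (or convert it into positive edges toward $i$), and two $\beta$-children that shed one edge while installing a positive edge $(i,k+1,+)$. Matching these to the weakly-well-structured spanning graphs of $\overline{G_k}$ yields every alternating weakly-well-structured subgraph in $\overline{G_k}$.

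The main obstacle is the bookkeeping around the $\beta$-children in relations (9),(9'): unlike the setting of Proposition \ref{non}, reductions here can strictly decrease the edge count, so one must verify both that every alternating weakly-well-structured subgraph of $\overline{P^l}$ is realized by at least one reduction path and that no graph is produced twice (the latter follows from the analogue of the Reduction Lemma stated in equation (\ref{red}), since duplicates would force overlapping interiors $\mathcal{P}^\circ(G)\cap \mathcal{P}^\circ(G')\ne\emptyset$). The lexicographic edge-labeling on each leaf is obtained, as in Lemma \ref{labeling}, by performing only the vertex-disjoint commutations allowed between reductions. Combining the local induction at vertex $n$ with the type $A_{n-1}$ result handling the loop-free stage completes the construction and proves that every alternating weakly-well-structured subgraph of $\overline{P^l}$ with lexicographic edge-labels appears among the leaves.
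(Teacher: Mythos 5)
Your overall strategy coincides with the paper's: perform first all reductions not involving the loop $(n,n,+)$, invoke the type $A_{n-1}$ result \cite[Theorem 18]{kar} for that stage, then analyze the reductions at vertex $n$ locally, with equation (\ref{red}) playing the role of the Reduction Lemma and Lemma \ref{labeling} supplying the lexicographic labels. However, there is one concrete inaccuracy that, as written, breaks the argument: you describe the output of the loop-free stage as \emph{every} alternating noncrossing forest contained in the transitive closure of the path on $[n]$. The full statement of \cite[Theorem 18]{kar}, which is precisely the input the paper's proof relies on, is that these leaves are the noncrossing alternating forests with negative edges that \emph{contain the edge $(1,n,-)$}, with lexicographic edge-labels. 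This restriction is not cosmetic: it guarantees that in every graph entering the second stage vertex $1$ is joined to $n$, so that the reductions at $n$ (including their $\beta$-children) always terminate with a positive edge or loop at vertex $1$, i.e., with condition $(vii)$ of weakly-well-structured satisfied. Under your version of the stage-one output, a stage-one forest in which $1$ is not joined to $n$ (for instance the single edge $(1,2,-)$) would survive untouched in the second stage and produce the leaf $\{(1,2,-),(n,n,+)\}$, which violates $(vii)$; the leaf set would then strictly exceed the claimed one, so the characterization you are aiming for would fail.

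Two smaller points. In relations $(9)$, $(9')$ only one of the two $\beta$-children installs the positive edge $(i,k+1,+)$ (the term $\beta y_{i,k+1}$); the other, $\beta z_i$, deletes the negative edge and moves the loop to $i$, and this second kind of $\beta$-child is needed to account for the non-spanning weakly-well-structured leaves whose loop sits at a vertex smaller than $n$. Also, your preservation argument for $(vii)$ in the ``easy direction'' is not correct as stated: after a reduction of type $(5)$ on $(1,2,-),(2,3,-)$ vertex $1$ still carries only negative edges, and intermediate nodes (the root $P^l$ included) need not have a positive edge at $1$ at all; what is true is that the \emph{leaves} do, and this is exactly where the $(1,n,-)$ condition from \cite[Theorem 18]{kar} enters. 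None of this changes the fact that your route is the paper's route; the fix is simply to use the cited theorem in its full, correct form.
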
        
    
    \proof 
The proof of Proposition \ref{prop:P^l}  proceeds analogously as that of Proposition  \ref{non}, using  equation (\ref{red}), instead of the original statement of the Reduction Lemma, and using the full statement of \cite[Theorem 18]{kar} which says that the leaves of a reduction tree with root labeled by  $([n], \{(i, i+1, -) \mid i \in [n-1]\})$ are all noncrossing alternating forests with negative edges on the vertex set $[n]$ containing edge $(1, n, -)$ with lexicographic edge-labels.  
 \qed

 \begin{theorem} \label{P^l} The set of leaves of a $\r$-reduction tree  $\mathcal{T}^{\r}$ with root labeled $P^l$ is, up commutations,  the set of all  alternating weakly-well-structured subgraphs $G$ of  $\overline{P^l}$ with lexicographic  edge-labels.
\end{theorem}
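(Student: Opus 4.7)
The plan is to mirror the proof of Theorem~\ref{ajaj}, substituting Proposition~\ref{prop:P^l} for Proposition~\ref{non} and accommodating the additional $\beta$-branches of the $\r$-reductions. Three ingredients are required: (a) every leaf of any $\r$-reduction tree rooted at $P^l$ is an alternating weakly-well-structured subgraph of $\overline{P^l}$ admitting a well-labeling; (b) no graph, with edge-labels disregarded, labels two distinct leaves of such a tree; and (c) the number of leaves with exactly $d$ edges is independent of the reduction sequence. Combined with Proposition~\ref{prop:P^l}, which realizes the full family of alternating weakly-well-structured subgraphs of $\overline{P^l}$ as the leaves of one particular tree, (a)--(c) force every $\r$-reduction tree rooted at $P^l$ to produce the same set of leaves up to relabeling, and Lemma~\ref{labeling}, extended in the obvious way to weakly-well-structured graphs, converts any such well-labeling to a lexicographic one by allowed commutations.

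For (a), I would extend Lemma~\ref{huh} to the $\r$-setting by a local inspection in the spirit of \cite[Lemma~12]{kar}: each reduction shape among $(5)$--$(9')$ of $\g$, including the extra $\beta$-branch that simply drops one of the two reacting edges and keeps the merged edge, preserves noncrossingness, the non-nesting conditions (iv) and (v), the cycle/loop constraint (vi), the edge-at-vertex-$1$ constraint (vii), and all seven well-labeling conditions. The $\beta$-branches are the easiest case, because they strictly delete edges, so any violation of a structural condition would have been present before the reduction. The alternating conclusion in (a) is automatic at a leaf, since any node to which some reduction still applies is not a leaf. For (b), I would invoke equation~(\ref{red}) directly: if some $G$ labeled two distinct leaves, then $\mathcal{P}^\circ(G)$ would appear twice in the disjoint-interior decomposition $\mathcal{P}^\circ(P^l)=\bigcup_i \mathcal{P}^\circ(G_i)$, a contradiction.

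For (c), I would follow Proposition~\ref{2^k} stratified by dimension. The interiors of the $d$-dimensional leaves of a $\r$-reduction tree cover a well-defined subset of $\mathcal{P}(P^l)$ whose $d$-dimensional content is an intrinsic invariant of $P^l$; moreover, by Lemma~\ref{2^k/d!}, each alternating weakly-well-structured leaf with $d$ edges and $k$ cyclic components (here $k\in\{0,1\}$, according to whether the leaf contains a loop) contributes exactly $2^k/d!$ to that content. Counting leaves separately by their $(d,k)$ type then yields the required invariance, and together with (a) and (b) pins down the leaf set as claimed. The main obstacle is (a): the long list of reduction shapes of $\g$, now enlarged by $\beta$-branches, must each be checked against all the structural and labeling conditions; this is the only genuinely new piece of bookkeeping, and once it is in place the existence-plus-counting scheme above dispatches Theorem~\ref{P^l}.
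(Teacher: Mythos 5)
Your overall skeleton (existence via Proposition \ref{prop:P^l}, a $\r$-analogue of Lemma \ref{huh}, disjointness of interiors via equation (\ref{red}), then Lemma \ref{labeling} to pass to lexicographic labels) is the paper's route, and your ingredients (a) and (b) are exactly what the paper asserts. The gap is in ingredient (c) as you justify it. For $d=n$ the volume argument of Proposition \ref{2^k} is sound, but for $d<n$ the claim that ``the interiors of the $d$-dimensional leaves cover a subset of $\mathcal{P}(P^l)$ whose $d$-dimensional content is an intrinsic invariant of $P^l$'' is not available a priori: those cells lie inside the union of walls of the subdivision produced by the particular reduction sequence, and until the theorem is proved you do not know that different $\r$-reduction trees induce the same walls, so the $d$-content of that union is not an invariant you may invoke — it presupposes the conclusion. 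Proposition \ref{2^k} and Lemma \ref{2^k/d!} only control the top-dimensional count (in Theorem \ref{ajaj} this suffices because every leaf of a $\mathcal{B}$-reduction tree has $n$ edges), whereas the genuinely new feature here is precisely the lower-dimensional $\beta$-leaves, so (c) cannot be obtained by ``stratifying Proposition \ref{2^k} by dimension.''

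The repair costs nothing beyond what you already have, and it is the paper's (implicit) argument: by Proposition \ref{prop:P^l} and equation (\ref{red}), the open cells $\mathcal{P}^\circ(G)$, as $G$ ranges over \emph{all} alternating weakly-well-structured subgraphs of $\overline{P^l}$, are pairwise disjoint and their union is $\mathcal{P}^\circ(P^l)$. Now take an arbitrary $\r$-reduction tree rooted at $P^l$: by your (a) its leaves belong to this family, and by equation (\ref{red}) their open cells also cover $\mathcal{P}^\circ(P^l)$. If some member $G$ of the family were missing from the leaves, its cell $\mathcal{P}^\circ(G)$ would be disjoint from every leaf cell and hence uncovered, a contradiction; your (b) rules out repetitions. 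Thus the leaf set equals the full family and (c) is not needed at all. With that substitution, and with the extension of Lemma \ref{huh} and Lemma \ref{labeling} to the weakly-well-structured setting checked as you describe, your proof matches the paper's.
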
 

\proof Proposition \ref{prop:P^l} proves the existence of one such $\r$-reduction tree. An analogue of Lemma \ref{huh} states that  if the root of a $\r$-reduction tree is a weakly-well-structured well-labeled graph, then so are all its nodes. Together with equation (\ref{red}) these imply Theorem \ref{P^l}.

\qed

As  corollaries of  Theorem \ref{P^l} we obtain  the  characterziation of reduced forms of the noncommutative monomial $w_{C_n}$ in $\g$ as well as a canonical triangulation of $\mathcal{P}(P^l)$ and an expression for its Ehrhart polynomial.

\begin{theorem} \label{beta-main}
If  the polynomial $P^{\r}_n (x_{ij}, y_{ij}, z_{i})$ is a reduced form of  $w_{C_n}$ in $\g$,  then

$$P^{\r}_n(x_{ij}, y_{ij}, z_{i})=\sum_{G} \beta^{n-|E(G)|}m^\mathcal{B}[G],$$

\noindent where the sum runs over all   alternating weakly-well-structured graphs $G$   on the vertex set $[n]$ with lexicographic  edge-labels.\end{theorem}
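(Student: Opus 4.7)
The plan is to obtain Theorem \ref{beta-main} as an essentially immediate consequence of Theorem \ref{P^l}, combined with a bookkeeping argument for the $\beta$-exponent attached to each leaf. The noncommutative Coxeter element $w_{C_n} = \prod_{i=1}^{n-1} x_{i,i+1}z_n$ corresponds under the edge-labeled graph encoding to $P^l$, which has exactly $n$ edges; moreover $\mathcal{P}(P^l) = \mathcal{P}(C_n^+)$, so $\overline{P^l}$ captures every positive root of type $C_n$, and every alternating weakly-well-structured graph on the vertex set $[n]$ is automatically a subgraph of $\overline{P^l}$. Thus Theorem \ref{P^l} already identifies the multiset of leaves of any $\r$-reduction tree rooted at $P^l$, up to commutations, with the desired indexing set of the sum.

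To extract the coefficients I would establish the local invariant that, under any of the reductions of $\g$, the sum
\[
|E(G)| + \deg_{\beta}(\text{coefficient of the corresponding term})
\]
is preserved as one passes from a parent node to any of its children. Inspection of the six rule types (\ref{graphs1})--(\ref{graphs6}) shows that in the three-children case (coming from relations $(5)$--$(8')$) two children retain the edge count of the parent and acquire no new $\beta$, while the third child loses exactly one edge and picks up a single factor of $\beta$. For the five-children case arising from relations $(9)$ and $(9')$, three children retain the parent's edge count and two lose one edge and each acquires a factor of $\beta$. In every case the local invariant is preserved, and induction on the depth of a leaf in the reduction tree shows that every leaf graph $G$ contributes exactly $\beta^{\,n-|E(G)|}\, m^{\mathcal{B}}[G]$ to the reduced form.

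Summing these contributions over the leaves listed by Theorem \ref{P^l} yields the formula asserted in Theorem \ref{beta-main}. No genuine obstacle remains: the only delicate point --- whether the same underlying graph could appear at distinct leaves with differing $\beta$-exponents --- is ruled out by Theorem \ref{P^l}, which states that each admissible graph appears exactly once up to commutations of vertex-disjoint edges, and such commutations affect neither $|E(G)|$ nor $\deg_{\beta}$. The modest remaining care is the case analysis for the five-children relations $(9)$ and $(9')$, which is a routine unpacking of the identity $x_{ij}z_j = z_i x_{ij} + y_{ij} z_i + z_j y_{ij} + \beta z_i + \beta y_{ij}$ combined with the corresponding graph-level reductions.
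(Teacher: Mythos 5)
Your proposal is correct and follows the same route as the paper: the paper states Theorem \ref{beta-main} precisely as a corollary of Theorem \ref{P^l}, with the $\beta$-exponent read off from the reduction tree exactly as in your bookkeeping (each $\beta$ in relations $(5)$--$(9')$ accompanies the loss of one edge, so a leaf $G$ carries $\beta^{\,n-|E(G)|}$). Your explicit check of the invariant $|E(G)|+\deg_\beta$ and of the three- versus five-children cases simply fills in details the paper leaves implicit.
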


\begin{theorem} \label{canonical} {\bf (Canonical triangulation.)}  Let $G_1,\ldots, G_k$ be all the alternating  well-structured graphs on the vertex set $[n]$. Then the root polytopes $\mathcal{P}(G_1), \ldots, $ $\mathcal{P}(G_k)$ are   $n$-dimensional simplices forming a triangulation of   $\mathcal{P}(P^l)$. Furthermore, the   intersections of the top dimensional simplices   $\mathcal{P}(G_1), \ldots, \mathcal{P}(G_k)$  are simplices $ \mathcal{P}(H)$, where  $H$ runs over all alternating  weakly-well-structured graphs on the vertex set $[n]$. 
 \end{theorem}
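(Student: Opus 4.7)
The plan is to derive this from Theorem \ref{P^l} by iterating the Reduction Lemma (Lemma \ref{reduction_lemma}) along the $\r$-reduction tree with root $P^l$, supplemented by the simplex characterization in Lemma \ref{simplex} and the linear-independence criterion in Lemma \ref{fong}. The leaves of such a tree, by Theorem \ref{P^l}, are, up to commutations, all alternating weakly-well-structured subgraphs of $\overline{P^l}$. Among these, the ones with exactly $n$ edges are precisely the alternating well-structured graphs $G_1,\ldots,G_k$, since conditions $(vi)$--$(vii)$ of weakly-well-structured together with having $n$ edges force exactly one loop and spanning-tree structure otherwise, which is the definition of well-structured.

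First I would check that each $\mathcal{P}(G_i)$ is an $n$-dimensional simplex. Each $G_i$ is alternating by hypothesis, and is in $\mathcal{L}_n$ by Lemma \ref{fong} (its unique cycle is the loop, which has one positive edge, an odd number). Applying Lemma \ref{simplex} gives that $\mathcal{P}(G_i)$ is a simplex, and the number of generating vectors equals $|E(G_i)| = n$, so it is $n$-dimensional. The same argument, with $d = |E(H)|$ edges in place of $n$, shows that every alternating weakly-well-structured $H$ gives a $d$-dimensional simplex $\mathcal{P}(H)$.

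Next, to obtain the triangulation I would induct on the depth of the $\r$-reduction tree. At each internal node with children coming from one of the reduction rules $(5)$--$(9')$, the Reduction Lemma gives $\mathcal{P}(G_0) = \mathcal{P}(G_1)\cup\mathcal{P}(G_2)$ with disjoint interiors, where $\mathcal{P}(G_1),\mathcal{P}(G_2)$ have the same dimension as $\mathcal{P}(G_0)$, and the intersection is $\mathcal{P}(G_3)$ of one lower dimension. (The $\r$-version records $G_3$ explicitly, which is exactly the $\beta$-term in equation (\ref{red}).) Iterating from the root $P^l$ down to the leaves, only the $n$-edged leaves contribute full-dimensional pieces, so $\mathcal{P}(P^l) = \bigcup_{i=1}^k \mathcal{P}(G_i)$ with pairwise disjoint interiors, which is the required triangulation.

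For the description of the intersections, I would argue that every pairwise (and multi-way) intersection of the maximal simplices $\mathcal{P}(G_i)$ arises as some $\mathcal{P}(G_3)$ appearing in the reduction process, hence is of the form $\mathcal{P}(H)$ for an alternating weakly-well-structured $H$; conversely, every weakly-well-structured leaf $H$ of the tree with $|E(H)| < n$ appears as such an intersection. One direction follows directly from the iterated Reduction Lemma by tracking the $G_3$-graphs. The main obstacle, and the step I expect to require the most care, is the converse: showing that every alternating weakly-well-structured $H$ really appears as an intersection of some two top-dimensional simplices $\mathcal{P}(G_i)\cap\mathcal{P}(G_j)$, rather than only as a face of a single $\mathcal{P}(G_i)$. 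I would handle this by observing that since $\mathcal{P}(H) \subset \mathcal{P}(P^l)$ lies on the interior of no single $\mathcal{P}(G_i)$ (the vectors $\v(e)$ for $e\in E(H)$ are a proper subset of a basis of $\mathbb{R}^n$, so the relative interior of $\mathcal{P}(H)$ lies on the boundary of any containing maximal simplex), there must be at least two maximal simplices meeting along it; then using the disjointness of interiors in the triangulation and the fact that $H$ can be extended to a well-structured graph in at least two inequivalent ways (by choosing how to complete the missing edges consistently with the well-structured conditions), these two extensions give the desired $G_i$ and $G_j$ with $\mathcal{P}(G_i)\cap\mathcal{P}(G_j) = \mathcal{P}(H)$.
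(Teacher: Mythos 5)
Your overall route --- deriving the statement from Theorem \ref{P^l} by iterating the Reduction Lemma along the $\r$-reduction tree, with Lemmas \ref{fong} and \ref{simplex} supplying that each alternating well-structured $G_i$ yields an $n$-dimensional simplex --- is exactly the derivation the paper intends (it states the theorem as a corollary of Theorem \ref{P^l} and equation (\ref{red}) with no further argument), and your treatment of the covering and disjoint-interiors part is fine. The genuine gap is in your converse step for the intersection claim. From two distinct alternating well-structured extensions $G_i, G_j \supseteq H$ you only obtain $\mathcal{P}(H)\subseteq \mathcal{P}(G_i)\cap\mathcal{P}(G_j)$; equality can fail whenever the extensions share more edges than those of $H$, since in this central triangulation two maximal cells meet along the simplex spanned by their common vertices, i.e.\ along $\mathcal{P}$ of a graph whose edge set contains $E(G_i)\cap E(G_j)\supseteq E(H)$. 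So you would need two extensions with $E(G_i)\cap E(G_j)=E(H)$ exactly, and you give no construction or argument that such a pair exists for an arbitrary alternating weakly-well-structured $H$; ``choosing how to complete the missing edges consistently'' is precisely the point at issue. (Your preliminary observation that $\mathcal{P}(H)$ must lie on the boundary of any containing maximal simplex also tacitly assumes $\mathcal{P}(H)$ is not contained in the boundary of $\mathcal{P}(P^l)$; this is where condition $(vii)$ of the weakly-well-structured definition has to enter, and your argument never uses it.)

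A repair that stays closest to the paper: by Theorem \ref{P^l}, every alternating weakly-well-structured $H$ actually occurs as a leaf of the $\r$-reduction tree, that is, as the graph $G_3$ recorded by the $\beta$-term at some node; the Reduction Lemma at that node gives $\mathcal{P}(H)=\mathcal{P}(G')\cap\mathcal{P}(G'')$ for the two sibling graphs $G',G''$, exhibiting $\mathcal{P}(H)$ as an interior wall of the subdivision, after which one passes to top-dimensional leaves below $G'$ and $G''$ meeting along $\mathcal{P}(H)$. Alternatively, note that for the way the theorem is actually used (the Ehrhart computation in Theorem \ref{ehrhart}) one only needs that the relative interiors $\mathcal{P}^\circ(H)$, as $H$ runs over all leaves, decompose $\mathcal{P}^\circ(P^l)$, and this is exactly equation (\ref{red}) combined with Theorem \ref{P^l}.
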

 
 Given a polytope $\mathcal{P}\subset \mathbb{R}^{n}$, the {\bf $t^{th}$ dilate} of $\mathcal{P}$ is 
$$\displaystyle t \mathcal{P}=\{(tx_1, \ldots, tx_{n}) |  (x_1, \ldots, x_{n}) \in \mathcal{P}\}.$$

The {\bf Ehrhart polynomial of an integer polytope} $\mathcal{P}\subset \mathbb{R}^{n}$ is   
$$\displaystyle L_{\mathcal{P}} (t) = \# (t\mathcal{P} \cap \mathbb{Z}^{n}).$$

 For background on the  theory of Ehrhart  polynomials see \cite{br}.

  \begin{theorem} \label{ehrhart} {\bf (Ehrhart polynomial.)}    
  \begin{align*} L_{\mathcal{P}(P^l)}(t) &=(-1)^{n} \left(  \sum_{d=1}^{n} f^l(d) (-1)^d\left({d+t  \choose d}+{d+t-1 \choose d}\right)+\sum_{d=1}^{n-1} f(d) (-1)^d {d+t \choose d} \right) ,\end{align*} where  $f^l(d)$ is the number of alternating  weakly-well-structured graphs on the vertex set $[n]$ with $d$ edges one of which is a  loop and $f(d)$ is the number of alternating  weakly-well-structured graphs on the vertex set $[n]$ with $d$ edges and  no loops.

\end{theorem}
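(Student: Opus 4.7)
The plan is to use the $\r$-reduction tree of Proposition~\ref{prop:P^l} to decompose $\mathcal{P}^\circ(P^l)$ into disjoint relative interiors of simplices indexed by alternating weakly-well-structured graphs on $[n]$, compute the Ehrhart polynomial of each such simplex directly, and then assemble the formula via Ehrhart reciprocity. Iterating equation~(\ref{red}) along an $\r$-reduction tree with root $P^l$ yields
\[
\mathcal{P}^\circ(P^l) \;=\; \bigsqcup_{H} \mathcal{P}^\circ(H),
\]
where by Theorem~\ref{P^l} the index $H$ runs over all alternating weakly-well-structured graphs on $[n]$. Disjointness follows by induction on the height of the tree: at each reduction step the Reduction Lemma says the two (or three) full-dimensional children meet only along the $(d-1)$-dimensional intersection child, whose relative interior lies in $\mathcal{P}^\circ(G_0)$ while its boundary lies on $\partial\mathcal{P}(G_0)$, so passing to relative interiors yields a disjoint union $\mathcal{P}^\circ(G_0)=\bigsqcup_i \mathcal{P}^\circ(G_i)$.

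Next, I would compute $L_{\mathcal{P}(H)}(t)$ for each such $H$ with $d=|E(H)|$ edges. When $H$ contains no loop, condition $(vi)$ forces $H$ to be a forest; a leaf-induction using the alternating property shows that $\{\v(e):e\in E(H)\}$ extends to a $\mathbb{Z}$-basis of $\mathbb{Z}^n$, so $\mathcal{P}(H)$ is a unimodular $d$-simplex and $L_{\mathcal{P}(H)}(t)=\binom{t+d}{d}$. When $H$ contains a loop $(k,k,+)$, I insert the lattice midpoint $e_k$ of the segment from $0$ to $2e_k$ and split
\[
S_1 = \mathrm{ConvHull}(0,e_k,\v(e_1),\ldots,\v(e_{d-1})),\quad S_2 = \mathrm{ConvHull}(e_k,2e_k,\v(e_1),\ldots,\v(e_{d-1})),
\]
so that $\mathcal{P}(H)=S_1\cup S_2$ with $S_1\cap S_2 = \mathrm{ConvHull}(e_k,\v(e_1),\ldots,\v(e_{d-1}))$. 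An analogous leaf-induction (now treating $e_k$ as a type-$B$ half-loop, and translating by $-e_k$ to handle $S_2$) shows that $S_1$, $S_2$, and $S_1\cap S_2$ are all unimodular, so inclusion-exclusion together with Pascal's identity gives
\[
L_{\mathcal{P}(H)}(t) \;=\; 2\binom{t+d}{d}-\binom{t+d-1}{d-1} \;=\; \binom{t+d}{d}+\binom{t+d-1}{d}.
\]

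Finally, Ehrhart reciprocity gives $L_{\mathcal{P}^\circ(H)}(t)=(-1)^{d_H}L_{\mathcal{P}(H)}(-t)$ for each leaf $H$, so summing over the disjoint decomposition yields $L_{\mathcal{P}^\circ(P^l)}(t)=\sum_H (-1)^{d_H}L_{\mathcal{P}(H)}(-t)$, and one more application of reciprocity to $\mathcal{P}(P^l)$ itself produces
\[
L_{\mathcal{P}(P^l)}(t) \;=\; (-1)^n L_{\mathcal{P}^\circ(P^l)}(-t) \;=\; (-1)^n \sum_H (-1)^{d_H} L_{\mathcal{P}(H)}(t).
\]
Grouping the sum according to whether $H$ contains a loop (yielding the $f^l(d)$-sum over $d\in\{1,\ldots,n\}$) or not (yielding the $f(d)$-sum over $d\in\{1,\ldots,n-1\}$) and substituting the two Ehrhart polynomials computed above recovers the stated formula. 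The main obstacle will be establishing the unimodularity claims in the loop case: the alternating well-structured conditions $(iv)$ and $(v)$, which forbid negative edges crossing the loop or spanning a positive edge, are precisely what make the leaf-induction close, since they guarantee that each time one removes a leaf of $H$ whose edge vector is $\pm e_l \pm e_{l'}$ the new vertex $l$ contributes a clean basis element to the generating set, and they likewise guarantee that the translated edge vectors $\v(e_i)-e_k$ appearing in $S_2$ remain in the saturated sublattice spanned by the others.
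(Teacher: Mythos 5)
Your proposal is correct and follows essentially the same route as the paper: decompose $\mathcal{P}(P^l)^\circ$ as a disjoint union of open simplices indexed by the alternating weakly-well-structured graphs (the paper invokes Theorem \ref{canonical} for this), record the Ehrhart data of each piece according to whether it has a loop, and assemble the formula with Ehrhart--Macdonald reciprocity. The only difference is bookkeeping: where you compute the closed Ehrhart polynomials $\binom{t+d}{d}$ and $\binom{t+d}{d}+\binom{t+d-1}{d}$ by hand (leaf-induction unimodularity plus the midpoint split at $e_k$) and then apply reciprocity piecewise and once more globally, the paper cites Stanley \cite[Theorem 1.3]{s1} to get the Ehrhart series $\frac{1}{(1-x)^{d+1}}$ and $\frac{1+x}{(1-x)^{d+1}}$, reads off the open counts ${t-1 \choose d}$ and ${t-1\choose d}+{t\choose d}$ directly, and applies reciprocity a single time at the end --- the two computations are equivalent, and your unimodularity claims do hold (for the loop case, note also that the intersection simplex is a face of the unimodular $S_1$, hence unimodular for free).
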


\proof
By Theorem \ref{canonical}, $\mathcal{P}(P^l)^\circ=\bigsqcup_{F \in W}  \mathcal{P}(F)^\circ \bigsqcup \bigsqcup_{F^l \in W^l}  \mathcal{P}(F^l)^\circ,$ where  $W$ is the set of  all alternating  weakly-well-structured graphs on the vertex set $[n]$ with no loops and $W^l$  is the set of   all alternating  weakly-well-structured graphs on the vertex set $[n]$ with a loop.  Then 
$$ \displaystyle  L_{\mathcal{P}(P^l)^\circ}(t)=\sum _{F \in W} L_{\mathcal{P}(F)^\circ}(t)+\sum _{F^l \in W^l} L_{\mathcal{P}(F^l)^\circ}(t).$$ By \cite[Theorem 1.3]{s1} the Ehrhart series of $\mathcal{P}(F)$, $F\in W$, $\#E(F)=d$, and  $\mathcal{P}(F^l)$, $F^l\in W^l$, $\#E(F^l)=d$, respectively, are 
$J(\mathcal{P}(F) , x)=1+\sum_{t=1}^\infty  L_{\mathcal{P}(F) }(t)x^t=\frac{1}{(1-x)^{d+1}}$ and $J(\mathcal{P}(F^l) , x)= \frac{1+x}{(1-x)^{d+1}}$. Equivalently, $L_{\mathcal{P}(F)^\circ}(t)={t-1 \choose d}$, $L_{\mathcal{P}(F^l)^\circ}(t)={t-1 \choose d}+{t \choose d}.$ Thus, 
$$ \displaystyle  L_{\mathcal{P}(P^l)^\circ}(t)=\sum_{d=1}^{n} f^l(d) \left({t-1 \choose d}+{t \choose d}\right)+\sum_{d=1}^{n-1} f(d) {t-1 \choose d},$$ where $f^l(d)=\# \{F^l\in W^l \mid \#E(F^l)=d\}$, $f(d)=\# \{F\in W \mid \#E(F)=d\}$. Using the Ehrhart-Macdonald reciprocity \cite[Theorem 4.1]{br} \begin{align*} & L_{\mathcal{P}(P^l)}(t) =(-1)^{n} L_{\mathcal{P}(P^l)^\circ}(-t)= \\ &=(-1)^{n} \left( \sum_{d=1}^{n} f^l(d) (-1)^d \left( {d+t  \choose d}+{d+t-1 \choose d} \right)+\sum_{d=1}^{n-1} f(d) (-1)^d {d+t \choose d}\right). \end{align*}
 
\qed

Theorems \ref{P^l}, \ref{beta-main}, \ref{canonical} and \ref{ehrhart} can be generalized to any well-structured graph $G$ by   adding further  technical requirements on the weakly-well-structured graphs that can appear among the leaves of a $\r$-reduction tree with root labeled by $G$. Due to the technical nature of these results, we omit them here.

\section{The type $D_n$ bracket algebra}
\label{sec:D}

 In the rest of the paper   we study the reduced forms of elements in the type $D_n$ bracket algebra with combinatorial methods fused with noncommutative Gr\"obner basis theory. While the connection with subdivisions of type $C_n$ root polytopes is present in this case as well, for brevity we choose to suppress this aspect.    

  Let the {\bf $\beta$-bracket algebra} $\gd$ {\bf  of type $D_n$} be an associative algebra over $\mathbb{Q}[\beta]$, where $\beta$ is  a variable (and a central element),   with a set of generators  $\{x_{ij}, y_{ij} \mid 1 \leq i\neq j\leq n\}$ subject to the following relations:
 
 (1) $x_{ij}+x_{ji}=0,$ $y_{ij}=y_{ji}$, for $i \neq j$,

(2) $z_i z_j=z_j z_i$

($3$) $x_{ij}x_{kl}= x_{kl}x_{ij}$, $y_{ij}x_{kl}= x_{kl}y_{ij}$, $y_{ij}y_{kl}= y_{kl}y_{ij}$, for  $i <j, k<l$ distinct.

($4$) $z_i x_{kl}=x_{kl} z_i$, $z_i y_{kl}=y_{kl} z_i$, for all $i\neq k, l$

(5) $x_{ij}x_{jk}=x_{ik}x_{ij}+x_{jk}x_{ik}+\beta x_{ik}$,  for  $1\leq i<j<k\leq n$,

($5'$) $x_{jk}x_{ij}=x_{ij}x_{ik}+x_{ik}x_{jk}+\beta x_{ik}$, for  $1\leq i<j<k\leq n$,

(6) $x_{ij}y_{jk}=y_{ik}x_{ij}+y_{jk}y_{ik}+\beta y_{ik}$,  for  $1\leq i<j<k\leq n$,

($6'$) $y_{jk}x_{ij}=x_{ij}y_{ik}+y_{ik}y_{jk}+\beta y_{ik}$, for  $1\leq i<j<k\leq n$,

(7) $x_{ik}y_{jk}=y_{jk}y_{ij}+y_{ij}x_{ik}+\beta y_{ij}$, for  $1\leq i<j<k\leq n$,

($7'$) $y_{jk}x_{ik}=y_{ij}y_{jk}+x_{ik}y_{ij}+\beta y_{ij}$, for  $1\leq i<j<k\leq n$,

(8) $y_{ik}x_{jk}=x_{jk}y_{ij}+y_{ij}y_{ik}+\beta y_{ij}$, for  $1\leq i<j<k\leq n$,

($8'$) $x_{jk}y_{ik}=y_{ij}x_{jk}+y_{ik}y_{ij}+\beta y_{ij}$, for  $1\leq i<j<k\leq n$,



\medskip

Note that $\g$ is the quotient of $\gd$, since $\g$ has all the above relation and in addition relations (9), ($9'$); see Section \ref{11}.

Let $w_{D_n}=\prod_{i=1}^{n-1} x_{i, i+1}y_{n-1, n}$ be a Coxeter type element in $\gd$ and let $P^\mathcal{B}_n$ be the polynomial in variables $x_{ij}, y_{ij},   1 \leq i\neq j\leq n$  obtained from $w_{D_n}$ by successively applying the defining relations $(5)-(8')$ in any order until unable to do so, in the algebra $\q[\beta]\langle x_{ij}, y_{ij} \mid 1\leq i<j\leq n \rangle / I,$ where $I$ is the (two-sided) ideal generated by the relations $(1)-(4)$. 
We call $P^\mathcal{B}_n$  a {\bf reduced form} of $w_{D_n}$ and consider the process of  successively applying the defining relations $(5)-(8')$ as a reduction process in  $\q[\beta]\langle x_{ij}, y_{ij} \mid 1\leq i<j\leq n  \rangle / I,$ with the {\bf reduction rules}:

\medskip

(5) $x_{ij}x_{jk}\rightarrow x_{ik}x_{ij}+x_{jk}x_{ik}+\beta x_{ik}$,  for  $1\leq i<j<k\leq n$,

\medskip

($5'$) $x_{jk}x_{ij}\rightarrow x_{ij}x_{ik}+x_{ik}x_{jk}+\beta x_{ik}$, for  $1\leq i<j<k\leq n$,

\medskip

(6) $x_{ij}y_{jk}\rightarrow  y_{ik}x_{ij}+y_{jk}y_{ik}+\beta y_{ik}$,  for  $1\leq i<j<k\leq n$,

\medskip

($6'$) $y_{jk}x_{ij} \rightarrow x_{ij}y_{ik}+y_{ik}y_{jk}+\beta y_{ik}$, for  $1\leq i<j<k\leq n$,

\medskip

(7) $x_{ik}y_{jk} \rightarrow y_{jk}y_{ij}+y_{ij}x_{ik}+\beta y_{ij}$, for  $1\leq i<j<k\leq n$,

\medskip

($7'$) $y_{jk}x_{ik} \rightarrow y_{ij}y_{jk}+x_{ik}y_{ij}+\beta y_{ij}$, for  $1\leq i<j<k\leq n$,

\medskip

(8) $y_{ik}x_{jk}\rightarrow x_{jk}y_{ij}+y_{ij}y_{ik}+\beta y_{ij}$, for  $1\leq i<j<k\leq n$,

\medskip

($8'$) $x_{jk}y_{ik} \rightarrow y_{ij}x_{jk}+y_{ik}y_{ij}+\beta y_{ij}$, for  $1\leq i<j<k\leq n$.

\medskip

The reduced form of any other element of $\gd$ is defined analogously.  As in the type $C_n$ case, the relations of $\gd$ can be interpreted as subdividing type $C_n$  root polytopes and the reduced form of an element as a subdivision, though not a triangulation, of a type $C_n$ polytope. We pursue a different approach to studying reduced forms here. 

We can think  of the reduction process   in $\q\langle \beta, x_{ij}, y_{ij} \mid 1\leq i<j\leq n \rangle / I_{\beta},$ where the generators of the (two-sided) ideal $I_{\beta}$ are those of $I$ and in addition the commutators of $\beta$ with all the other variables $x_{ij}, y_{ij}$, $ 1\leq i<j\leq n$.

 \medskip
\noindent {\bf Conjecture 2. (Kirillov \cite{kir})}\textit{ Apart from applying the  relations (1)-(4), the reduced form  $P^\mathcal{B}_n$ of $w_{D_n}$ does not depend on the order in which the reductions are performed.}
 \medskip

Note that the above statement does not hold true for any monomial; some examples illustrating this were already explained in the comments after Conjecture 1 in Section \ref{sec:alg}.

\section{Graphs for type $D_n$}
\label{sec:13}

       It is straighforward to reformulate the  reduction rules (5)-($8'$) in terms of reductions on graphs. If $m \in \gd$, then we replace each monomial $m$ in the reductions by corresponding graphs $G^\mathcal{B}[m]$. The analogous procedure for type $C_n$ is explained in detail in Section \ref{reductionsB}.  
   
   We now define a central notion for those signed graphs whose corresponding monomials turn out to have a unique reduced form in $\gd$.  We reuse the expression ``good graph" from the type $C_n$ case, though the meaning in type $D_n$ is different. Previously we used good in the type $C_n$ sense; in the following we use good in the type $D_n$ sense.

A graph  $H$ on the vertex set $[n]$ and $k$ edges labeled   $1, \ldots, k$  is \textbf{good} if it satisfies the following conditions:

 $(i)$ The negative edges of $H$ form a noncrossing graph.
  
  $(ii)$ If edges $(i, j, -)_a$ and $(j, k, \e_2)_b$ are in $H$, $i<j<k$, $\e_2 \in \{-, +\}$,  then $a<b$.

 $(iii)$ If edges $(i, j, -)_a$ and $(i, k, \e_2)_b$ are in $H$, $i<j<k$, $\e_2 \in \{-, +\}$,  then $a>b$.
 
 $(iv)$ If edges $( j,k,  -)_a$ and $(i, k, \e_2)_b$ are in $H$, $i<j<k$, $\e_2 \in \{-, +\}$,  then $a<b$.

 $(v)$ If edges $(j,k,  +)_a$ and $(i, k, -)_b$ are in $H$, $i<j<k$,    then $a>b$.

$(vi)$  If edges $(i,k,  -)_a$ and $(j, l, +)_b$ are in $H$, $i<j<k<l$,    then $a>b$.

 \medskip
 
 \begin{lemma} \label{good}
 If $H$ is a good graph, then reduction rules $(5'), (6'), (7'), (8)$ cannot be performed on it. If we perform any of   the reduction rules $(5), (6), (7),(8')$ on $H$, then we obtain a graph $H^r$, which is also a good graph.
 \end{lemma}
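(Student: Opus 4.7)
The plan is to split the lemma into its two assertions and address them separately. The first assertion, that reductions $(5'),(6'),(7'),(8)$ cannot be applied to a good graph, is essentially immediate once each reduction rule is translated into the local edge-label pattern it requires. For $(5')$, the pattern $x_{jk}x_{ij}$ asks for edges $(j,k,-)_a$ and $(i,j,-)_{a+1}$ with $i<j<k$, but condition $(ii)$ on good graphs forces the label of $(i,j,-)$ to be smaller than the label of $(j,k,-)$, contradicting $a+1>a$. The pattern of $(6')$, namely $(j,k,+)_a$ and $(i,j,-)_{a+1}$, likewise violates $(ii)$ taken with $\e_2=+$. The pattern of $(7')$, namely $(j,k,+)_a$ and $(i,k,-)_{a+1}$, violates condition $(v)$. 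Finally, the pattern of $(8)$, namely $(i,k,+)_a$ and $(j,k,-)_{a+1}$, violates condition $(iv)$ taken with $\e_2=+$. Each of these four checks is a one-line reading off of the definitions.

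For the second assertion, that the four allowed reductions $(5),(6),(7),(8')$ send good graphs to good graphs, the plan is a structured case analysis indexed by (reduction rule, offspring $G^r$, condition to verify). Each reduction acts on two edges $e_1=(i,j,\e_1)_a$ and $e_2=(j,k,\e_2)_{a+1}$ (or $(i,k,\e_1)_a,(j,k,\e_2)_{a+1}$) sharing a pivot, and produces either an edge pair containing the spanning edge $e^*$ built on the outer vertices together with the edge opposite the pivot, or, in the $\beta$-term, a graph with $e_1,e_2$ replaced by $e^*$ alone. All other edges of $H$ are untouched, so any instance of conditions $(i)$--$(vi)$ involving only untouched edges holds automatically. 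Thus I only need to verify those conditions whose hypothesis involves the newly introduced edge $e^*$ paired with some third edge $f$ of $H$.

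Two observations make these verifications routine. First, because $e_1$ and $e_2$ had consecutive labels $a, a+1$, every other edge $f$ of $H$ has label either $\le a-1$ or $\ge a+2$, and $f$ already satisfied the relevant good-graph inequality with at least one of $e_1,e_2$; this inequality is then inherited by $e^*$, whose label is either $a$ or $a+1$. Second, the noncrossing condition $(i)$ on the negative subgraph is preserved because the only new negative edge in any offspring of $(5)$ is $(i,k,-)$, and a hypothetical negative edge $(p,q,-)$ crossing $(i,k,-)$ would, by a straightforward check of the interval inclusions, have to cross one of $(i,j,-)$ or $(j,k,-)$ in $H$, violating $(i)$ for $H$; reductions $(6),(7),(8')$ introduce no new negative edge, so $(i)$ is trivially preserved. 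For the label conditions $(ii)$--$(vi)$, one walks through each reduction and, for each third edge $f$ incident to one of $\{i,j,k\}$, matches it against the new edge $e^*$ and checks the inequality against the label of $f$ using the inequality that $f$ already satisfied with $e_1$ or $e_2$.

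The main obstacle is bookkeeping: there are four allowed reductions, each with three offspring and six conditions to inspect, and for several of these the third edge $f$ splits into subcases based on sign and endpoint location relative to $\{i,j,k\}$. I would present the argument as a small table keyed to (reduction, offspring, condition), writing full details only for the configurations where $e^*$ interacts with a pre-existing edge in a nontrivial way, namely when $f$ meets $\{i,k\}$, and dispatching the remaining entries with the two inheritance observations above. No single case requires more than unfolding the definitions, so the proof reduces to carefully tabulating the local interactions.
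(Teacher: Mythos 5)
Your overall strategy is the same as the paper's: the first assertion is handled exactly as you do it (the label conditions forbid the local patterns required by $(5'),(6'),(7'),(8)$, and commutations cannot help since the two edges in each pattern share a vertex, so their relative order is fixed), and the second assertion is an inspection of how conditions $(i)$--$(vi)$ persist under $(5),(6),(7),(8')$; the paper shortens this inspection by importing the purely negative-edge package (condition $(i)$ and the label conditions among negative edges) from Lemma 12 of the type $A_n$ prequel \cite{kar}, observing that only rule $(5)$ creates a new negative edge.

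There is, however, a concrete flaw in your Observation 2. The claim that any negative edge $(p,q,-)$ crossing the new edge $(i,k,-)$ must already cross $(i,j,-)$ or $(j,k,-)$ in $H$ is false: a negative edge through the pivot $j$ reaching outside the interval $[i,k]$, namely $(j,q,-)$ with $q>k$ or $(p,j,-)$ with $p<i$, crosses $(i,k,-)$ but shares the vertex $j$ with both original edges and hence crosses neither. So preservation of $(i)$ does not follow from interval inclusions alone; such edges must instead be excluded by the label conditions combined with the fact that the reduced pair carries consecutive labels $a,a+1$. For instance, for $(j,q,-)_b$ with $q>k$, condition $(ii)$ applied to $(i,j,-)_a$ gives $a<b$ while condition $(iii)$ applied to $(j,k,-)_{a+1}$ gives $b<a+1$, a contradiction; symmetrically, $(p,j,-)_b$ with $p<i$ is killed by $(iv)$ and $(ii)$. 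With this repair (which is precisely what the paper obtains by citing the type $A_n$ lemma) your plan goes through, but note that your Observation 1 also needs care for the same reason: the condition governing the new pair $(e^*,f)$ may be a different condition from the one $f$ satisfied with $e_1$ or $e_2$, and in several such configurations the right conclusion is that $f$ cannot exist in $H$ at all, again by combining two label conditions with the consecutiveness of $a$ and $a+1$ or with noncrossingness, rather than by direct inheritance of a single inequality.
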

 
 \proof  Note  that there is no way of commuting the labels of good graphs as to obtain an order on the edges which would allow rules $(5'), (6'), (7'), (8)$ to be performed. 
 
 That the following properties carry over from $H$ to $H^r$ follows from \cite[Lemma 12]{m1}, noting that  only reduction rule $(5)$ creates new negative edges:
 \begin{itemize}
 
\item The negative edges of $H$ form a noncrossing graph.
  
 \item If edges $(i, j, -)_a$ and $(j, k, -)_b$ are in $H$, $i<j<k$,   then $a<b$.

\item  If edges $(i, j, -)_a$ and $(i, k, -)_b$ are in $H$, $i<j<k$,   then $a>b$.
 
\item If edges $( j,k,  -)_a$ and $(i, k, -)_b$ are in $H$, $i<j<k$,   then $a<b$.

  \end{itemize}

 Inspection shows that the following properties  carry over from $H$ to $H^r$, keeping in mind that the above properties carry over for negative edges.
 
  \begin{itemize}
 
\item If edges $(i, j, -)_a$ and $(j, k, +)_b$ are in $H$, $i<j<k$,    then $a<b$.

 \item  If edges $( j,k,  -)_a$ and $(i, k, +)_b$ are in $H$, $i<j<k$,   then $a<b$.

\item If edges $(j,k,  +)_a$ and $(i, k, -)_b$ are in $H$, $i<j<k$,   then $a>b$.

\item  If edges $(i,k,  -)_a$ and $(j, l, +)_b$ are in $H$, $i<j<k<l$,    then $a>b$.
  \end{itemize}

Finally, given that all the above properties carry over from $H$ to $H^r$, it follows that the property 
 \begin{itemize}
 
\item  If edges $(i, j, -)_a$ and $(i, k, +)_b$ are in $H$, $i<j<k$,   then $a>b$.
 
\end{itemize}

also carries over. 

 \qed

Why are good graphs so good? Well, if the relations $(5), (6), (7),(8')$ were a noncommutative Gr\"obner basis for the ideal they generate in  $\q\langle \beta, x_{ij}, y_{ij} \mid 1\leq i<j\leq n \rangle / I_{\beta},$ with tips 
$x_{ij}x_{jk}$,  
 $x_{ij}y_{jk} $,   
 $x_{ik}y_{jk} $, 
 $x_{jk}y_{ik} $, respectively,
then it would follow immediately that the reduced form of monomials corresponding to good graphs are unique by results in noncommutative Gr\"obner bases theory. As it turns out the previous is not the case, however, we can still use Gr\"obner bases to prove the uniqueness of the reduced forms of the monomials corresponding to good graphs, which we call {\bf good monomials}, with a little bit more work. We show how to do this in the next section.

\section{Gr\"obner bases}
\label{sec:grobi}

In this section we briefly review some facts about noncommutative Gr\"obner bases and use them to show that the reduced forms of good monomials are unique.  

 We use the terminology and notation of \cite{g}, but state the results only for our special algebra. For the more general statements, see \cite{g}. Throughout this section  we consider the noncommutative case only. 
 
Let  $${\bf R=\q\langle \beta, x_{ij}, y_{ij} \mid 1\leq i<j\leq n \rangle / I_{\beta}}$$ with multiplicative basis $\base$, the set of noncommutative monomials up to equivalence under the commutativity relations described by $I_\beta$. 

The {\bf tip} of an element $f \in R$ is the largest basis element appearing in its expansion, denoted by Tip$(f)$. Let CTip$(f)$ denote the coefficient of Tip$(f)$ in this expansion. A set of elements $X$ is {\bf tip reduced} if for distinct elements $x, y \in X$, Tip$(x)$ does not divide Tip$(y)$.

\medskip

A well-order $>$ on $\base$ is {\bf admissible} if for $p, q, r, s \in \base$:

1. if $p<q$ then $pr<qr$ if both $pr\neq 0$ and $qr \neq 0$;

2. if $p<q$ then $sp<sq$ if both $sp\neq 0$ and $sq \neq 0$;

3. if $p=qr$, then $p>q$ and $p>r$.

\medskip

Let $f, g \in R$  and suppose that there are monomials $b, c \in \base$ such that 

\medskip

1. Tip$(f)c$=$b$Tip$(g)$.

\medskip

2. Tip$(f)$ does not divide $b$ and Tip$(g)$ does not divide $c.$

\medskip
Then the {\bf overlap relation of $f$ and $g$ by $b$ and $c$} is 

$$o(f, g, b, c)=\frac{fc}{\mbox{CTip}(f)}-\frac{bg}{\mbox{CTip}(g)}.$$


\medskip 
 \begin{proposition} \label{suf} (\cite[Theorem 2.3]{g}) A tip reduced  generating set of elements $\gr$ of the ideal $J$ of $R$ is a Gr\"obner basis, where the ordering on the monomials is admissible, if for every overlap relation $$o(g_1, g_2, p, q) \Rightarrow_{\gr} 0,$$

\noindent where $g_1, g_2 \in \gr$ and the above notation means that dividing $o(g_1, g_2, p, q)$ by $\gr$ yields a remainder of $0$. 
\end{proposition}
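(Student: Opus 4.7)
The plan is to prove this via the standard Buchberger-style argument for noncommutative Gröbner bases, adapted to our multiplicative basis $\mathcal{B}$ (monomials modulo the partial commutativity in $I_\beta$). Recall that $\gr$ is a Gröbner basis of $J$ if and only if the tip ideal $\mathrm{Tip}(J) := \langle \mathrm{Tip}(f) : f \in J \setminus \{0\} \rangle$ coincides with $\langle \mathrm{Tip}(g) : g \in \gr \rangle$. Since the inclusion $\langle \mathrm{Tip}(g) : g \in \gr \rangle \subset \mathrm{Tip}(J)$ is trivial, the task reduces to showing that for every nonzero $f \in J$ there exist $g \in \gr$ and monomials $p, q \in \mathcal{B}$ with $p \cdot \mathrm{Tip}(g) \cdot q = \mathrm{Tip}(f)$.

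The argument I would run is by contradiction on a well-chosen minimality. Fix $f \in J$ and consider all representations $f = \sum_{k=1}^N c_k\, a_k g_{i_k} b_k$ with $c_k \in \mathbb{Q}[\beta]$, $a_k, b_k \in \mathcal{B}$ and $g_{i_k} \in \gr$. To each such representation associate the multiset $M = \{a_k \mathrm{Tip}(g_{i_k}) b_k\}_{k}$, and order representations by comparing these multisets in the well-ordering induced by the admissible order $>$ on $\mathcal{B}$. Pick a representation whose multiset $M$ is minimal, and let $t$ be its maximum element. If $t = \mathrm{Tip}(f)$, then $\mathrm{Tip}(f)$ is a multiple of some $\mathrm{Tip}(g_{i_k})$ and we are done. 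Otherwise $t > \mathrm{Tip}(f)$, forcing the leading terms at $t$ to cancel, so there are at least two indices $k \neq l$ with $a_k \mathrm{Tip}(g_{i_k}) b_k = a_l \mathrm{Tip}(g_{i_l}) b_l = t$.

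Now I would analyze this equation in $\mathcal{B}$. Because $\gr$ is tip reduced, $\mathrm{Tip}(g_{i_k})$ cannot divide $\mathrm{Tip}(g_{i_l})$ as a submonomial, and vice versa; hence the only way for $a_k \mathrm{Tip}(g_{i_k}) b_k = a_l \mathrm{Tip}(g_{i_l}) b_l$ is that $\mathrm{Tip}(g_{i_k})$ and $\mathrm{Tip}(g_{i_l})$ overlap genuinely in the sense of the definition preceding Proposition~\ref{suf}: there exist $b, c \in \mathcal{B}$ with $\mathrm{Tip}(g_{i_k}) c = b \, \mathrm{Tip}(g_{i_l})$, and monomials $p, q$ such that $a_l = a_k b \cdot p$, $b_k = q \cdot c \cdot b_l$ (up to the permissible commutations encoded by $I_\beta$, which I would handle uniformly by working with the equivalence classes in $\mathcal{B}$). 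The key calculation is that
\[
\frac{c_k}{\mathrm{CTip}(g_{i_k})} a_k g_{i_k} b_k \;+\; \frac{c_l}{\mathrm{CTip}(g_{i_l})} a_l g_{i_l} b_l \;=\; (\text{lower terms}) \;+\; p \cdot o(g_{i_k}, g_{i_l}, b, c) \cdot q'
\]
up to a rescaling, so that the top-degree cancellation is exactly produced by the overlap relation $o(g_{i_k}, g_{i_l}, b, c)$.

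By hypothesis $o(g_{i_k}, g_{i_l}, b, c) \Rightarrow_\gr 0$, meaning it admits a representation $\sum_m d_m u_m g_{j_m} v_m$ in which every $u_m \mathrm{Tip}(g_{j_m}) v_m$ is strictly smaller than $\mathrm{Tip}(g_{i_k}) c = b \mathrm{Tip}(g_{i_l})$. Substituting this into the two terms at $t$ replaces them by terms whose associated monomials $p u_m \mathrm{Tip}(g_{j_m}) v_m q'$ are all strictly less than $t$, together with possible lower-order contributions from the original $c_k g_{i_k}$ and $c_l g_{i_l}$. Combined with the remaining $N - 2$ terms of the original representation, this yields a new representation of $f$ whose associated multiset is strictly smaller than $M$ in the induced well-ordering, contradicting the minimality of $M$. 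The step I expect to be the main obstacle is the overlap case analysis: one must verify that, modulo the partial commutativity of $I_\beta$ (which makes $\mathcal{B}$ a set of equivalence classes rather than literal strings), the only obstruction to tip divisibility is indeed captured by the overlap relations as defined, and that the lower-order terms generated by the substitution do not inadvertently introduce new monomials at height $t$. This is handled by the admissibility properties of $>$, which guarantee that multiplying on either side by $p$ and $q'$ preserves the inequalities and yields the required strict decrease.
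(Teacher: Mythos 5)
First, note that the paper does not prove this statement at all: Proposition~\ref{suf} is quoted from Green (\cite[Theorem 2.3]{g}), so any complete argument you give is necessarily ``different from the paper.'' Your outline is the standard Buchberger/diamond-lemma argument (minimal representation with respect to the multiset of monomials $a_k\,\mathrm{Tip}(g_{i_k})\,b_k$, cancellation at the maximal monomial, replacement via a reduction of the relevant S-element), and that is indeed the right strategy.

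However, there is a genuine gap in the case analysis at the crucial step. From $a_k\,\mathrm{Tip}(g_{i_k})\,b_k = a_l\,\mathrm{Tip}(g_{i_l})\,b_l = t$ and tip-reducedness you conclude that the two tips must overlap genuinely, so that the cancellation is governed by an overlap relation. That is false: tip-reducedness only excludes one tip being a factor of the other, and the two occurrences of the tips inside the word $t$ may be \emph{disjoint} (one entirely to the left of the other, possibly after commutations permitted by $I_\beta$). In that case there is no overlap relation to invoke, and the hypothesis of the proposition says nothing; instead one must run the separate ``disjoint occurrences'' argument: writing $t = a\,\mathrm{Tip}(g_{i_k})\,b\,\mathrm{Tip}(g_{i_l})\,c$, the difference of the two normalized terms equals $a\,g_{i_k}\,b\,\bigl(\mathrm{Tip}(g_{i_l})-g_{i_l}/\mathrm{CTip}(g_{i_l})\bigr)c - a\bigl(\mathrm{Tip}(g_{i_k})-g_{i_k}/\mathrm{CTip}(g_{i_k})\bigr)b\,g_{i_l}\,c$, whose terms all have associated monomials strictly below $t$ by admissibility, so the multiset still drops. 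Without this case the induction is incomplete. Relatedly, in the partially commutative setting the dichotomy ``divisor, overlap, or disjoint'' must itself be verified for equivalence classes in $\base$ rather than literal words (commutations can merge or separate occurrences); you flag this as the main obstacle but do not resolve it, and it is precisely the point where the free-algebra argument needs Green's multiplicative-basis framework to go through. So the proposal is the right approach but, as written, does not constitute a complete proof.
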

See \cite[Theorem 2.3]{g} for the more general formulation of Proposition \ref{suf} and  \cite[Section 2.3.2]{g} for the formulation of the Division Algorithm.

\begin{proposition} \label{grobi} Let $J$ be the ideal generated by the elements 
\begin{itemize}
\item  $x_{ij}x_{jk}-x_{ik}x_{ij}-x_{jk}x_{ik}-\beta x_{ik}$,  for  $1\leq i<j<k\leq n$,

\item $x_{ij}y_{jk}-y_{ik}x_{ij}-y_{jk}y_{ik}-\beta y_{ik}$,  for  $1\leq i<j<k\leq n$,

\item $x_{ik}y_{jk}-y_{jk}y_{ij}-y_{ij}x_{ik}-\beta y_{ij}$, for  $1\leq i<j<k\leq n$,

\item $x_{jk}y_{ik}-y_{ij}x_{jk}-y_{ik}y_{ij}-\beta y_{ij}$, for  $1\leq i<j<k\leq n$,

\end{itemize}

\noindent in $R / Y,$ where   $Y$ is the ideal in  $R$ generated by the elements 
\begin{itemize}
\item $x_{ik}x_{ij}y_{ik}+x_{jk}x_{ik}y_{ik}+\beta x_{ik}y_{ik}-x_{ij}y_{ij}x_{jk}-x_{ij}y_{ik}y_{ij}-\beta x_{ij}y_{ij}$, for  $1\leq i<j<k\leq n$. 

\end{itemize}

\noindent Then there is a monomial order in which the above generators of $J$ form a Gr\"obner basis $\gr$ of $J$ in  $R/ Y,$ and the tips of the generators are, respectively, 
\begin{itemize}
\item  $x_{ij}x_{jk}$,  
 
\item $x_{ij}y_{jk} $,   
 
\item $x_{ik}y_{jk} $, 
 
\item $x_{jk}y_{ik} $.

\end{itemize}

\end{proposition}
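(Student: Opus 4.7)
The plan is to apply Proposition~\ref{suf}, so the proof divides into three tasks: (i) exhibit an admissible monomial order on $\base$ making the stated tips the leading monomials of the four generators, (ii) verify tip reducedness, and (iii) check that every overlap relation $o(g_1,g_2,p,q)$ reduces to $0$ under the Division Algorithm in $R/Y$.

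For (i) I would use the degree-lexicographic order induced by $\beta < y_{ab} < x_{ab}$, ordering the $x$-letters (and separately the $y$-letters) by smaller second index first, with ties broken by larger first index. Because the non-trivial relations in $I_\beta$ only allow swapping letters with disjoint index sets, each $I_\beta$-class has a canonical representative (the lexicographically largest word in the class) on which the order is well defined and admissible. A direct inspection confirms that in each of the four generators the marked monomial is indeed the leading term: in relation $(5)$ the factor $x_{ij}$ outweighs both $x_{ik}$ and $x_{jk}$, while in $(6)$--$(8')$ the precedence of $x$-letters over $y$-letters ensures the leading term is the one starting with $x$. Tip reducedness in (ii) is immediate by inspection of the four length-two words: no one of them is a letter of another, and pairwise they differ either by the pattern ``$xx$ vs.\ $xy$'' or by the index patterns of their $y$-factors.

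Step (iii) is the substantive part. Since each tip has length two, every overlap has length one and identifies the final letter of one tip with the initial letter of another. The final letters of the four tips are $x_{jk},y_{jk},y_{jk},y_{ik}$, only the first of which is an $x$-letter, while all initial letters are $x$-letters; hence every overlap composes generator~$(5)$ on the left with one of the four generators on the right whose tip starts with $x_{jk}$. A short case analysis isolates four families of overlap monomials:
\begin{align*}
 \text{(A)}\ &x_{ij}x_{jk}x_{kl}, && i<j<k<l,\\
 \text{(B)}\ &x_{ij}x_{jk}y_{kl}, && i<j<k<l,\\
 \text{(C)}\ &x_{ij}x_{jk}y_{mk}, && i<j<m<k,\\
 \text{(D)}\ &x_{ij}x_{jk}y_{mk}, && m<j,\ i<j<k,
\end{align*}
with family (D) splitting further into the subcases $m<i$, $m=i$, and $i<m$. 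In each case I would expand the overlap relation and reduce both branches to a common $\gr$-irreducible form using the admissible commutations of $I_\beta$. Family (A) is the type-$A_n$ Serre-like cubic ambiguity, already confluent in the subalgebra generated by the $x_{ij}$'s as established in \cite{kar}. Families (B), (C), and the two subcases $m\neq i$ of (D) are resolved by direct expansion: after at most four further reductions on each branch, the two sides coincide upon cancellation, the key point being that the hypothesis $m\neq i$ permits the commutation $x_{ij}y_{mk}=y_{mk}x_{ij}$ that realigns the two branches. The exceptional subcase $m=i$ of (D) is exactly where confluence fails in $R$: the two reduction branches of $x_{ij}x_{jk}y_{ik}$ yield
\[
 x_{ik}x_{ij}y_{ik}+x_{jk}x_{ik}y_{ik}+\beta x_{ik}y_{ik} \quad\text{and}\quad x_{ij}y_{ij}x_{jk}+x_{ij}y_{ik}y_{ij}+\beta x_{ij}y_{ij},
\]
whose difference is precisely the generator of $Y$ displayed in the statement. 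Thus in $R/Y$ this overlap also reduces to $0$, and Proposition~\ref{suf} applies to yield the Gr\"obner basis.

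The main obstacle is the bookkeeping in families (B), (C), and the two non-exceptional subcases of (D): each branch of the overlap unfolds into a sum of six to ten monomials, and one must verify that after applying the $I_\beta$-commutations the two branches produce the same $\gr$-irreducible normal form. The conceptual content, however, is captured by the observation above that the unique irreducible obstruction to confluence in $R$ is precisely the generator of $Y$.
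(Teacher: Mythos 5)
Your proposal is correct and takes essentially the same route as the paper: the paper likewise fixes a degree-lexicographic order with $x$-letters dominating $y$-letters (resolving commutation ambiguities by a canonical representative), reads off the stated tips, notes tip-reducedness, and then invokes Proposition~\ref{suf} after asserting that all overlap relations reduce to $0$ in $R/Y$. Your additional observation that the generator of $Y$ is precisely the S-polynomial of generators (5) and (8$'$) in the case $m=i$, and that all other overlaps resolve already via the allowed commutations, correctly makes explicit what the paper compresses into ``a calculation of the overlap relations.''
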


\proof Let $x_{ij}>y_{kl}$ for any $i<j$, $k<l$, and let $x_{ij}>x_{kl}$ and $y_{ij}>y_{kl}$ if $(i, j)$ is less than $(k, l)$ lexicographically. The degree of a monomial is determined by setting the degrees of $x_{ij}, y_{ij}$ to be $1$ and the degrees of $\beta$ and scalars to be $0$. A monomial with higher degree is bigger in the order $>$, and the lexicographically bigger monomial of the same degree is greater than the lexicographically smaller one. Since in $R$ two equal monomials can be written in two different ways due to commutations, we can pick a representative to work with, say the one which is the ``largest" lexicographically among all possible ways of writing the monomial, to resolve any ambiguities.  The order $>$ just defined is admissible, in it the tips of  
\begin{itemize}
\item  $x_{ij}x_{jk}-x_{ik}x_{ij}-x_{jk}x_{ik}-\beta x_{ik}$,  for  $1\leq i<j<k\leq n$,

\item $x_{ij}y_{jk}-y_{ik}x_{ij}-y_{jk}y_{ik}-\beta y_{ik}$,  for  $1\leq i<j<k\leq n$,

\item $x_{ik}y_{jk}-y_{jk}y_{ij}-y_{ij}x_{ik}-\beta y_{ij}$, for  $1\leq i<j<k\leq n$,

\item $x_{jk}y_{ik}-y_{ij}x_{jk}-y_{ik}y_{ij}-\beta y_{ij}$, for  $1\leq i<j<k\leq n$,

\end{itemize}

are 

\begin{itemize}
\item  $x_{ij}x_{jk}$,  
 
\item $x_{ij}y_{jk} $,   
 
\item $x_{ik}y_{jk} $, 
 
\item $x_{jk}y_{ik} $.

\end{itemize}

In particular the generators of $J$ are tip reduced. A calculation of the overlap relations shows that  $o(g_1, g_2, p, q) \Rightarrow_{\gr} 0$ in $R/ Y$,   where $g_1, g_2 \in \gr$. 
Proposition \ref{suf} then implies Proposition \ref{grobi}.
\qed

\begin{corollary} \label{/Y}
The reduced form of a good monomial $m$  is unique in $R / Y$. 
\end{corollary}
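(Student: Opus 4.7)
\proof [Proof proposal for Corollary \ref{/Y}]
The plan is to reduce the statement to a direct application of noncommutative Gr\"obner basis theory. By Proposition \ref{grobi}, the four elements
\begin{align*}
& x_{ij}x_{jk}-x_{ik}x_{ij}-x_{jk}x_{ik}-\beta x_{ik},\\
& x_{ij}y_{jk}-y_{ik}x_{ij}-y_{jk}y_{ik}-\beta y_{ik},\\
& x_{ik}y_{jk}-y_{jk}y_{ij}-y_{ij}x_{ik}-\beta y_{ij},\\
& x_{jk}y_{ik}-y_{ij}x_{jk}-y_{ik}y_{ij}-\beta y_{ij}
\end{align*}
form a Gr\"obner basis $\gr$ of $J$ in $R/Y$ with respect to an explicit admissible order $>$, and their tips are $x_{ij}x_{jk}$, $x_{ij}y_{jk}$, $x_{ik}y_{jk}$, $x_{jk}y_{ik}$. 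The foundational fact from \cite[Section 2.3]{g} then gives that the division algorithm by $\gr$ produces, for any element of $R/Y$, a unique remainder independent of the choices made during division. The goal is therefore to identify the reduction process of $\gd$ applied to a good monomial $m$ with a run of this division algorithm.

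First I would observe that reducing modulo the four elements of $\gr$ is literally the same operation as applying reduction rules $(5), (6), (7), (8')$ of $\gd$. The first half of Lemma \ref{good} guarantees that on a good graph the remaining rules $(5'), (6'), (7'), (8)$ never apply, so every legal reduction step on a good monomial is realized as a step of the division algorithm by $\gr$. The second half of Lemma \ref{good} then shows that the resulting graph is again good, so the same dichotomy persists after each step; inductively every reduction history of $m$ is a run of the division algorithm. Since $\gr$ is a Gr\"obner basis, the output is unique in $R/Y$.

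The main subtlety, and what would require some care, is to check that the commutations encoded in relations $(1)$--$(4)$ of $\gd$ do not disrupt this identification. These commutations are baked into the multiplicative basis $\base$ underlying $R$, and the admissible order $>$ of Proposition \ref{grobi} is defined on $\base$, so reordering adjacent commuting factors to expose a tip of $\gr$ is legitimate both at the algebra level and at the level of the division algorithm. Conversely, Lemma \ref{good} is phrased in terms of edge labels and is invariant under the commutations that merely swap labels of edges on disjoint vertex sets, so no commutation made during the reduction process can activate one of the forbidden rules $(5'), (6'), (7'), (8)$ on any successor. Once this bookkeeping is in place, uniqueness of the reduced form of $m$ in $R/Y$ follows at once from Proposition \ref{grobi}.
\qed
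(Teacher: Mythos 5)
Your proposal is correct and follows essentially the same route as the paper: identify the reductions $(5),(6),(7),(8')$ on a good monomial with the division algorithm by the Gr\"obner basis $\gr$ of Proposition \ref{grobi} (whose tips are exactly the replaced monomials), and invoke uniqueness of remainders in $R/Y$. Your explicit appeal to Lemma \ref{good} and the remark about commutations merely spell out what the paper's shorter proof leaves implicit.
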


\proof Since the tips of elements of the Gr\"obner basis $\gr$ of  $J$ are exactly the monomials which we replace in the prescribed reduction rules $(5), (6), (7), (8')$, the reduced form of a good monomial $m$ is the remainder $r$ upon division by the elements of $\gr$ with the order $>$ described in the proof of Proposition \ref{grobi}. Since we proved that in $R/Y$ the basis $\gr$ is a Gr\"obner basis f $J$, it follows 
by \cite[Proposition 2.7]{g} that the remainder $r$ of the division of $m$ by $\gr$ is unique in $R/Y$.  That is, the reduced form of a good monomial $m$  is unique in $R / Y$. 
\qed

We would, however, like to prove uniqueness of the reduced form of a good monomial $m$ in  $R$. This is what the next series of statements   accomplish.

\begin{lemma} \label{Y}
There is a monomial order in which the elements 
\begin{itemize}
\item $x_{ik}x_{ij}y_{ik}+x_{jk}x_{ik}y_{ik}+\beta x_{ik}y_{ik}-x_{ij}y_{ij}x_{jk}-x_{ij}y_{ik}y_{ij}-\beta x_{ij}y_{ij}$, for  $1\leq i<j<k\leq n$, 

\end{itemize}

\noindent  are a Gr\"obner basis of $Y$ in $R$, and the tip of  $x_{ik}x_{ij}y_{ik}+x_{jk}x_{ik}y_{ik}+\beta x_{ik}y_{ik}-x_{ij}y_{ij}x_{jk}-x_{ij}y_{ik}y_{ij}-\beta x_{ij}y_{ij}$ is $x_{ij}y_{ik}y_{ij}$.
\end{lemma}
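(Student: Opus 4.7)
The plan is to exhibit an admissible monomial order on the basis $\base$ of $R$ under which each generator of $Y$ has tip $x_{ij}y_{ik}y_{ij}$, to verify tip-reducedness, and to show that every overlap relation among these tips reduces to zero modulo the generating set itself; Proposition~\ref{suf} will then yield the Gr\"obner basis property.

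First I would fix a weighted degree order on $\base$. Assign $\mathrm{wt}(y_{ij})=2$, $\mathrm{wt}(x_{ij})=1$, $\mathrm{wt}(\beta)=0$; compare monomials first by total weight, and break ties by any fixed admissible length-lexicographic refinement. Because weight is multiplicative, the resulting order is admissible. The six summands of
\[
g_{ijk}=x_{ik}x_{ij}y_{ik}+x_{jk}x_{ik}y_{ik}+\beta x_{ik}y_{ik}-x_{ij}y_{ij}x_{jk}-x_{ij}y_{ik}y_{ij}-\beta x_{ij}y_{ij}
\]
have weights $4,4,3,4,5,3$ respectively, so the unique largest monomial is $x_{ij}y_{ik}y_{ij}$; this is the tip. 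Tip-reducedness is then immediate: the tips across distinct triples are distinct length-$3$ monomials, and within any single tip all three letters share the vertex $i$, so no two of them commute; hence no tip divides another in $\base$.

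The substantive step is the overlap analysis. A potential overlap of two tips $T_1=x_{ij}y_{ik}y_{ij}$ and $T_2=x_{i'j'}y_{i'k'}y_{i'j'}$ is an equation $T_1c=bT_2$ in $\base$ with $|b|=|c|=p\in\{1,2\}$, and such an equation forces the multisets of letters on the two sides to coincide. I would split into the self-overlap case $(i',j',k')=(i,j,k)$ and the cross-triple case. In the self-overlap case the internal rigidity of $T_1$ (no two of its letters commute) forces $b=c$ as multisets and further forces each letter of $b$ to commute past every letter of $T_1$; since all variables of $g_{ijk}$ have indices in $\{i,j,k\}$, the letters of $b$ then commute with every term of $g_{ijk}$, so $o(g_{ijk},g_{ijk},b,c)=\pm(g_{ijk}c-bg_{ijk})=0$ in $R$. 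In the cross-triple case one enumerates the admissible matchings of letters between $T_1\cup\{c_1,\dots,c_p\}$ and $\{b_1,\dots,b_p\}\cup T_2$, using the type distinction between $x$- and $y$-variables together with the fact that $y_{\alpha\beta}=y_{\gamma\delta}$ iff $\{\alpha,\beta\}=\{\gamma,\delta\}$; in each surviving configuration one checks directly that the equality $T_1c=bT_2$ cannot hold in $\base$ because both sides are rigid words in which the letters sharing the pivot vertex block the commutations that would be needed to identify them.

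The main obstacle will be this cross-triple bookkeeping, since several index configurations, for instance $(i',j')=(i,j)$ with $k'\ne k$ or $i'\in\{j,k\}$, yield genuine multiset matches of letters that must be examined case by case. The feature that keeps the analysis tractable is that every variable inside $T_1$ (resp.\ $T_2$) is incident to $i$ (resp.\ $i'$), so each tip remains in a fixed left-to-right order under any commutation and all candidate cross-triple overlaps are eliminated outright. Once the overlap relations are seen to vanish on reduction by $\{g_{ijk}\}$, Proposition~\ref{suf} concludes that $\{g_{ijk}\}_{1\le i<j<k\le n}$ is a Gr\"obner basis of $Y$ in $R$, and together with the tip identification from the first step this proves Lemma~\ref{Y}.
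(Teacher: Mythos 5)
Your proposal is correct and takes essentially the same route as the paper: exhibit an admissible order on $\base$ in which each generator has tip $x_{ij}y_{ik}y_{ij}$, note that the generators are tip reduced, check that the overlap relations all vanish, and invoke Proposition \ref{suf}. The differences are minor: your weight order with $\mathrm{wt}(y_{ij})=2$, $\mathrm{wt}(x_{ij})=1$, $\mathrm{wt}(\beta)=0$ isolates the tip by weight alone (arguably more cleanly than the paper's degree-lexicographic order with $y$-variables large), and you work out the overlap analysis in the partially commutative setting explicitly, where the paper simply asserts that there are no overlap relations at all.
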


\proof  Let  $x_{ij}<y_{kl}$ for any $i<j$, $k<l$, and let $x_{ij}>x_{kl}$ and $y_{ij}>y_{kl}$ if $(i, j)$ is less than $(k, l)$ lexicographically. The degree of a monomial is determined by setting the degrees of $x_{ij}, y_{ij}$ to be $1$ and the degrees of $\beta$ and scalars to be $0$. A monomial with higher degree is bigger in the order $>$, and the lexicographically bigger monomial of the same degree, the variables being read from left to right,   is greater than the lexicographically smaller one. Since in $R$ two equal monomials can be written in two different ways due to commutations, we can pick a representative to work with, say the one which is the ``largest" lexicographically among all possible ways of writing the monomial, to resolve any ambiguities.  The order $>$ just defined is admissible, the tip of  $x_{ik}x_{ij}y_{ik}+x_{jk}x_{ik}y_{ik}+\beta x_{ik}y_{ik}-x_{ij}y_{ij}x_{jk}-x_{ij}y_{ik}y_{ij}-\beta x_{ij}y_{ij}$ is $x_{ij}y_{ik}y_{ij}$, and thus the generators of $Y$ are tip reduced. Since there are no overlap relations at all, by Proposition \ref{suf} Lemma \ref{Y} follows.

\qed

\begin{corollary} \label{div}
If $f\in Y$ then there is a term of   $f$  which can be written as  $m_1 \cdot x_{ij}y_{ik}y_{ij} \cdot m_2$ for some $1\leq i<j<k\leq n$, where $m_1, m_2$ are some monomials in $R$.
\end{corollary}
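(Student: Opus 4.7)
The plan is to derive this as a direct consequence of Lemma \ref{Y} and the standard divisibility property of tips of elements of an ideal possessing a Gr\"obner basis. Specifically, once we know that the family
\[
g_{ijk} = x_{ik}x_{ij}y_{ik}+x_{jk}x_{ik}y_{ik}+\beta x_{ik}y_{ik}-x_{ij}y_{ij}x_{jk}-x_{ij}y_{ik}y_{ij}-\beta x_{ij}y_{ij}, \quad 1\leq i<j<k\leq n,
\]
is a Gr\"obner basis of $Y$ with respect to the admissible order $>$ of Lemma \ref{Y}, and that $\mathrm{Tip}(g_{ijk}) = x_{ij}y_{ik}y_{ij}$, the conclusion should follow formally.

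First I would invoke the basic fact (see for instance \cite[Proposition 2.7]{g}, or the very definition of a Gr\"obner basis in the noncommutative setting) that for any nonzero $f$ in an ideal $J$ with Gr\"obner basis $\mathcal{G}$, the monomial $\mathrm{Tip}(f)$ is divisible (as a basis element of $\base$, i.e., up to the commutation relations imposed by $I_\beta$) by $\mathrm{Tip}(g)$ for some $g \in \mathcal{G}$. Applying this to $\mathcal{G}=\{g_{ijk}\}$ and $J = Y$, I conclude that $\mathrm{Tip}(f)$ can be written as $m_1 \cdot x_{ij}y_{ik}y_{ij} \cdot m_2$ for suitable monomials $m_1, m_2 \in \base$ and some indices $i<j<k$. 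Since $\mathrm{Tip}(f)$ is itself a term of $f$ (with nonzero coefficient), this yields the desired term.

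The one subtlety is that $R$ is not a free associative algebra but the quotient $\q\langle \beta, x_{ij}, y_{ij}\rangle/I_{\beta}$ by the partial commutativity relations, so ``divisibility'' must be interpreted in the multiplicative basis $\base$ of equivalence classes of monomials modulo $I_\beta$; I would point this out briefly and note that the admissible order chosen in the proof of Lemma \ref{Y} was defined on $\base$ precisely by picking a canonical representative (the lexicographically largest presentation), so the divisibility statement makes sense and is the standard one. The case $f=0$ is vacuous, so there is no obstacle there. I expect the main (minor) obstacle is simply being careful about the quotient structure of $R$ when invoking tip-divisibility; no additional combinatorial argument is needed, and the corollary is essentially a one-line consequence of Lemma \ref{Y}.
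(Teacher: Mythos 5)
Your proposal is correct and follows essentially the same route as the paper: Lemma \ref{Y} gives $\langle \mathrm{Tip}(Y)\rangle = \langle x_{ij}y_{ik}y_{ij} \mid 1\leq i<j<k\leq n\rangle$, and the tip-divisibility property of a Gr\"obner basis then shows $\mathrm{Tip}(f)$, which is a term of $f$, is divisible by some $x_{ij}y_{ik}y_{ij}$. Your extra remarks about interpreting divisibility in the multiplicative basis $\base$ of $R$ are a sound clarification of what the paper leaves implicit.
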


\proof Lemma \ref{Y} implies that  $$\langle x_{ij}y_{ik}y_{ij} \mid 1\leq i<j<k\leq n\rangle=\langle Tip(Y) \rangle.$$ From here the statement follows.

\qed

\begin{theorem} \label{unique}
The reduced form of a good monomial $m$  is unique in $R$. 
\end{theorem}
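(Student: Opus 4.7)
My plan is to combine the uniqueness result in $R/Y$ coming from Corollary~\ref{/Y} with the structural information about the tip ideal of $Y$ given by Corollary~\ref{div}, and then to rule out the offending tip by exploiting the good-graph invariant established in Lemma~\ref{good}. Concretely, suppose $P_1$ and $P_2$ are two reduced forms in $R$ of the same good monomial $m$. Both equal $m$ modulo the reduction ideal $J$, and by Corollary~\ref{/Y} they coincide in $R/Y$; hence $P_1-P_2\in Y$ as an element of $R$. The goal is to show this forces $P_1-P_2=0$.

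If instead $P_1-P_2\neq 0$ in $R$, then Corollary~\ref{div} supplies a surviving term of $P_1-P_2$ which, as an element of $R$, factors as $m_1\cdot x_{ij}y_{ik}y_{ij}\cdot m_2$ for some $1\leq i<j<k\leq n$ and some monomials $m_1,m_2$. I will derive a contradiction by showing that no monomial appearing in $P_1$ or $P_2$ admits such a factorization in $R$. By Lemma~\ref{good}, the class of good graphs is closed under the four allowed reductions $(5),(6),(7),(8')$, so every monomial that occurs in either $P_1$ or $P_2$ corresponds to a good graph. It therefore suffices to prove that no good-graph monomial equals $m_1\cdot x_{ij}y_{ik}y_{ij}\cdot m_2$ in $R$.

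The key step is the observation that the three variables $x_{ij},y_{ik},y_{ij}$ pairwise share vertex $i$, so no relation of $I_\beta$ (which only swaps adjacent variables with pairwise-disjoint index sets) can alter their relative order in any word representative of a given monomial of $R$. Consequently, the existence of a factorization of the above form forces the edge label $a$ of $(i,j,-)$ in the underlying good graph to be strictly smaller than the edge label $b$ of $(i,k,+)$. But condition $(iii)$ in the definition of a good graph of type $D_n$ requires precisely $a>b$. This contradiction shuts down the assumption $P_1-P_2\neq 0$, giving $P_1=P_2$ in $R$.

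The main obstacle I anticipate is the bookkeeping of the previous paragraph: one must carefully verify that an equality $\mu=m_1\cdot x_{ij}y_{ik}y_{ij}\cdot m_2$ in $R$ genuinely pins down the relative order of the three edges $(i,j,-),(i,k,+),(i,j,+)$ in $G^{\mathcal{B}}[\mu]$. This rests entirely on the fact that the commutation ideal $I_\beta$ only identifies words differing by swaps of vertex-disjoint adjacent variables, together with the observation that every pair among $\{x_{ij},y_{ik},y_{ij}\}$ shares vertex $i$. Once this is made precise, the rest is a short assembly of Lemma~\ref{good}, Corollary~\ref{/Y}, and Corollary~\ref{div}, and the theorem follows.
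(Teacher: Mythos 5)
Your proposal is correct and takes essentially the same route as the paper's own proof: uniqueness in $R/Y$ via Corollary \ref{/Y}, the existence in any nonzero element of $Y$ of a term divisible by $x_{ij}y_{ik}y_{ij}$ via Corollary \ref{div}, and the exclusion of such a term from reduced forms of good monomials using Lemma \ref{good} together with property $(iii)$ of good graphs with $\e_2=+$. Your added remarks---that all terms of a reduced form are good monomials and that commutations in $I_\beta$ cannot reorder the three variables sharing vertex $i$---simply make explicit what the paper leaves implicit.
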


\proof By Corollary \ref{/Y} the reduced form of a good monomial $m$  is unique in $\q\langle \beta, x_{ij}, y_{ij} \mid 1\leq i<j\leq n \rangle / I_{\beta} / Y$.  Since by Corollary \ref{div} every $f\in Y$ contains a term divisible by $x_{ij}y_{ik}y_{ij}$ for some $1\leq i<j<k\leq n$, it follows that the reduced form of a good monomial $m$  is unique in $\q\langle \beta, x_{ij}, y_{ij} \mid 1\leq i<j\leq n \rangle / I_{\beta},$ since a good monomial cannot contain any term divisible by $x_{ij}y_{ik}y_{ij}$ because of property $(iii)$, with $\e_2=+$. 
\qed
\medskip

A special case of Theorem \ref{unique} is the statement of  Conjecture 2, since $w_{D_n}$ is a good monomial.

  \section*{Acknowledgement}
 I am grateful to my advisor Richard Stanley for suggesting  this problem and for many helpful suggestions. I would  like to thank Alex Postnikov   for sharing his insight into root polytopes. I would also like to thank    Anatol Kirillov for his intriguing  conjectures.

\end{document}